\newcommand{\R}{\mathbb{R}}
\newcommand{\C}{\mathbb{C}}
\newcommand{\Z}{\mathbb{Z}}
\newcommand{\N}{\mathbb{N}}
\newcommand{\Sb}{\mathbb{S}}
\newcommand{\ket}{\rangle}
\newcommand{\bra}{\langle}
\newcommand{\T}{\mathbb{T}}
\newcommand{\Li}{\mathcal{L}}
\newcommand{\Ni}{\mathcal{N}}
\newcommand{\Ai}{\mathcal{A}}
\newcommand{\Bi}{\mathcal{B}}
\newcommand{\Hi}{\mathcal{H}}
\newcommand{\Ci}{\mathcal{C}}
\newcommand{\Qi}{\mathcal{Q}}
\newcommand{\Mi}{\mathcal{M}}
\newcommand{\Ki}{\mathcal{K}}
\newcommand{\Si}{\mathcal{S}}
\newcommand{\Ti}{\mathcal{T}}
\newcommand{\Ii}{\mathcal{I}}
\newcommand{\GA}{\mathfrak{A}}
\newcommand{\GH}{\mathfrak{H}}
\newcommand{\id}{\operatorname{id}}
\newcommand{\Aut}{\operatorname{Aut}}
\newcommand{\diag}{\operatorname{diag}}
\newcommand{\Ch}{\operatorname{Ch}}
\newcommand{\Tr}{\operatorname{Tr}}
\newcommand{\ep}{\varepsilon}
\newcommand{\Mn}{\operatorname{M}}
\newcommand{\Un}{\operatorname{U}}
\newcommand{\pd}{\partial}
\newcommand{\OP}{\operatorname{OP}}
\newcommand{\Ad}{\operatorname{Ad}}
\newtheorem{thm}{Theorem}[section]
\newtheorem{cor}[thm]{Corollary}
\newtheorem{Lemma}[thm]{Lemma}
\newtheorem{prop}[thm]{Proposition}
\theoremstyle{definition}
\newtheorem{Example}[thm]{Example}
\newtheorem{Notation}[thm]{Notation}
\theoremstyle{definition}
\newtheorem*{thm*}{Theorem}
\newtheorem*{Acknowledgment}{Acknowledgment}
\theoremstyle{definition}
\newtheorem{dfn}[thm]{Definition}
\newtheorem{Remark}[thm]{Remark}
\newcommand{\Index}{\operatorname{Index}}
\newcommand{\Ker}{\operatorname{Ker}}
\newcommand{\Dom}{\operatorname{Dom}}
\newcommand{\Sf}{\operatorname{Sf}}
\newcommand{\Ext}{\operatorname{Ext}}
\newcommand{\Res}{\operatorname{Res}}
\newcommand{\longto}{\longrightarrow}
\newcommand{\bone}{\mathbf{1}}
\title{Index pairings for $\R^n$-actions and Rieffel deformations}
\author{Andreas Andersson}
\affil{\small Email: fornjotnr@hotmail.com}
\affil{Wollongong University, School of Mathematics and Applied Statistics, 2522 Wollongong, Australia}
\affil{Mathematics Subject Classification 2010 Primary: 19K56; Secondary: 19K33, 19K35}
\affil{Keywords: Thom isomorphism, Kasparov $KK$-theory, spectral triples, noncommutative geometry
}
\date{August 10, 2016} 
\begin{document}
\maketitle
\abstract
With an action $\alpha$ of $\R^n$ on a $C^*$-algebra $A$ and a skew-symmetric $n\times n$ matrix $\Theta$ one can consider the Rieffel deformation $A_\Theta$ of $A$, which is a $C^*$-algebra generated by the $\alpha$-smooth elements of $A$ with a new multiplication. The purpose of this paper is to obtain explicit formulas for $K$-theoretical quantities defined by elements of $A_\Theta$. We give an explicit realization of Thom class in $KK$ in any dimension $n$, and use it in the index pairings. For local index formulas we assume that there is a densely defined trace on $A$, invariant under the action. When $n$ is odd, for example, we give a formula for the index of operators of the form $\slashed{P}\pi^\Theta(u)\slashed{P}$, where $\pi^\Theta(u)$ is the operator of left Rieffel multiplication by an invertible element $u$ over the unitization of $A$, and $\slashed{P}$ is projection onto the nonnegative eigenspace of a Dirac operator constructed from the action $\alpha$. The results are new also for the undeformed case $\Theta=0$. The construction relies on two approaches to Rieffel deformations in addition to Rieffel's original one: ``Kasprzak deformation" and ``warped convolution". We end by outlining potential applications in mathematical physics. 

\tableofcontents 

\section{Introduction}
The Thom isomorphism plays a fundamental role in $K$-theory of spaces \cite[\S2.7]{Atiy1}, \cite[\S IV.1]{Karo1}. In $K$-theory of $C^*$-algebras there is an analogous result, the ``Connes-Thom isomorphism", which plays an equally important role. 
\begin{thm}[{\cite[Thm. 2]{Co}}]\label{ConnesThomthm}
Let $(A,\R,\alpha)$ be a $C^*$-dynamical system. Then for $\bullet\in\Z_2=\{0,1\}$, there are natural isomorphisms
$$
\pd_\bullet:K_\bullet(A)\to K_{\bullet+1}(A\rtimes_\alpha\R).
$$
\end{thm}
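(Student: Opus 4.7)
The plan is to realize $\partial_\bullet$ as Kasparov product with an explicit ``Thom class'' in $KK^1(A, A\rtimes_\alpha\R)$ and to establish invertibility via Takai duality. This matches the Kasparov-theoretic perspective running through the rest of the paper.

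First I would construct the Thom class. Take $\Hi = L^2(\R, A\rtimes_\alpha\R)$, viewed as a Hilbert right $(A\rtimes_\alpha\R)$-module via pointwise multiplication. Let $A$ act by the covariant representation $(\pi(a)\xi)(t) = \alpha_{-t}(a)\xi(t)$, together with the regular representation of $\R$, and set $D = -i\,d/dt$. Then $(\Hi,\pi,D)$ is an odd unbounded Kasparov $(A, A\rtimes_\alpha\R)$-cycle whose bounded transform $t_\alpha = D(1+D^2)^{-1/2}$ defines the proposed Thom class. The candidate isomorphism is $\partial_\bullet(x) = x \otimes_A t_\alpha$, with the parity shift accounted for by the oddness of the cycle.

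Next I would exhibit an inverse. Applying the same construction to the dual system $(A\rtimes_\alpha\R,\R,\hat\alpha)$ produces a class $t_{\hat\alpha} \in KK^1(A\rtimes_\alpha\R,\, (A\rtimes_\alpha\R)\rtimes_{\hat\alpha}\R)$. By Takai duality the target is isomorphic to $A\otimes\Ki(L^2(\R))$, and Morita invariance promotes $t_{\hat\alpha}$ to an element of $KK^1(A\rtimes_\alpha\R, A)$. The key assertion is then that the Kasparov products $t_\alpha \otimes_{A\rtimes_\alpha\R} t_{\hat\alpha}$ and $t_{\hat\alpha} \otimes_A t_\alpha$ equal the respective identity classes in $KK^0$.

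The principal obstacle is this product computation. Combining the two Dirac-type operators leads to a genuine two-dimensional Dirac operator on $L^2(\R^2)$, and one must verify that the resulting cycle implements Bott periodicity. A direct unbounded-$KK$ calculation is feasible but delicate, and the recognition of the identity class on $A$ is what truly carries the theorem. An alternative route, closer to Connes' original argument, is axiomatic: establish naturality of $\partial_\bullet$ in $A$, compatibility with six-term exact sequences, and a normalization, then reduce to a trivial action by the exterior equivalence obtained from tensoring $\alpha$ with the translation action on $C_0(\R)$. For the trivial action, $A \rtimes_{\id} \R \cong A \otimes C_0(\R) = SA$ and the statement becomes the standard suspension/Bott isomorphism, which I would take as my endpoint.
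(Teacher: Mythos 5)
You are proving a statement the paper itself only cites (Connes' Theorem~2) and does not reprove directly; the closest analogue is the unnamed proposition in \S\ref{Extsection}, which establishes that $\mathbf{t}_\alpha$ is a $KK$-equivalence for general $n$ (and hence recovers the $n=1$ case). Your overall Kasparov-theoretic framework --- Dirac-type Thom class in $KK^1(A,A\rtimes_\alpha\R)$, inverse coming from the dual action plus Takai duality --- is exactly the Fack--Skandalis perspective that the paper adopts, so in broad strokes you are on the right track.

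However, your primary route has a gap. Computing $\mathbf{t}_\alpha\otimes_{A\rtimes_\alpha\R}\mathbf{t}_{\hat\alpha}$ as a ``genuine two-dimensional Dirac operator on $L^2(\R^2)$'' and recognising it as $1$ is only a tractable, self-contained calculation when $A=\C$ (Bott periodicity). For general $A$, the product cycle does not simplify to a harmonic-oscillator-type operator with an isolated kernel, and there is no direct way to recognise the identity class from the explicit cycle. This is precisely why Fack--Skandalis (and the paper, in \S\ref{Extsection}) do not attempt it: the direct computation furnishes only the \emph{normalization} axiom ($A=\C$), and the passage to general $A$ requires the axiomatic bootstrap. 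So your ``principal obstacle'' is not merely delicate --- it is the wrong route, and your alternative route is in fact the necessary one.

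On that alternative route, your reduction mechanism differs from the paper's and is not clearly correct as stated. You propose reducing to the trivial action by an ``exterior equivalence obtained from tensoring $\alpha$ with the translation action on $C_0(\R)$.'' The standard stabilization/exterior-equivalence trick applies to $\alpha\otimes\Ad(\rho)$ on $A\otimes\Ki(L^2(\R))$, not on $A\otimes C_0(\R)$, and it is closer to the proof of Takai duality than to the Thom-class bootstrap. What the paper actually uses (following Fack--Skandalis, and the only known argument that works for $n\geq 2$) is a rescaling homotopy: one forms $A'=C([0,1],A)$ with the action $(\alpha'_t f)(\lambda)=\alpha_{\lambda t}(f(\lambda))$, observes that the evaluation maps $\rho_\lambda:A'\to A$ are equivariant and mutually homotopic, and uses naturality and associativity of the Kasparov product to transport the identity $\mathbf{t}_{\alpha^0}\otimes\hat{\mathbf{t}}_{\alpha^0}=1_A$ (trivial action, $\lambda=0$) to $\mathbf{t}_{\alpha}\otimes\hat{\mathbf{t}}_{\alpha}=1_A$ ($\lambda=1$). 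You should replace the exterior-equivalence step by this rescaling homotopy, and also note that the relevant axioms are normalization, naturality, and compatibility with \emph{external} Kasparov products (not with six-term sequences as you wrote). Finally, recall that the paper explicitly warns that Connes' original proof, which is closer in flavour to your exterior-equivalence idea, does not extend to $n\geq 2$; the rescaling homotopy is what makes the argument uniform in $n$.
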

It is a result of Fack and Skandalis \cite{FS} that the Connes-Thom isomorphism for one-parameter actions $\alpha:\R\to\Aut(A)$ is given by Kasparov product with a certain ``Thom class" $\mathbf{t}_\alpha$ in the Kasparov group $KK^1(A,B)$, where $B:= A\rtimes_\alpha\R$ is the crossed product. The class $\mathbf{t}_\alpha$ is a $KK$-equivalence with degree shift $1$, so the result is stronger than merely isomorphism in $K$-theory.

The direct proof of Theorem \ref{ConnesThomthm} given in \cite{Co} is \emph{not} available for $n\geq 2$. For some interesting remarks about why that is true, see \cite[\S 13]{Co9}. The $KK$ proof in \cite{FS} gives an explicit representative of the Thom element $\mathbf{t}_\alpha$. We would like to have such a representative of $\mathbf{t}_\alpha$ for actions by $\R^n$, with arbitrary $n$. 

Crossed products provide a rich source of examples to noncommutative index theory. By Theorem \ref{ConnesThomthm}, it is possible to use $B:=A\rtimes_\alpha\R$ to deduce $K$-theoretical information about $A$ and vice versa. Suppose that $A$ possesses a trace $\tau$ satisfying the invariance property $\tau\circ\alpha_t=\tau$ for all $t\in\R$. There is a canonical way of extending $\tau$ to a trace $\hat{\tau}$ on $B$, called the ``dual" of $\tau$. Now $\hat{\tau}$ induces a homomorphism
$$
\hat{\tau}_*:K_0(B)\to\R
$$ 
on $K$-theory. Suppose that $u$ is an $\alpha$-smooth unitary in the unitization $A^\sim=A\times\C$, defining a class $[u]$ in $K_1(A)$, and that $u$ belongs to the domain $\Dom(\tau)$ of the trace $\tau$. Then one has the formula \cite[Thm. 3]{Co}
\begin{equation}\label{Connformula}
\hat{\tau}(\pd_1([u]))=\frac{1}{2\pi i}\tau(u^*\delta(u)),
\end{equation}
where $\delta$ is the infinitesimal generator of the action $\alpha$. An explicit expression for the left-hand side of \eqref{Connformula} was then obtained in \cite{Le} by constructing a certain extension
\begin{equation}\label{ToeplLesch}
0\longto B\longto\Ti\longto A\longto 0
\end{equation}
of $A$ by the crossed product $B=A\rtimes_\alpha\R$. The ``Toeplitz algebra" $\Ti$ is generated by operators of the form 
$$
T_a=P\pi_\alpha(a)P,\qquad a\in A
$$
where $\pi_\alpha:A\to\Mi(B)$ is the usual embedding of $A$ into the multiplier algebra $\Mi(B)$ of the crossed product and $P$ is a projection in $\Mi(B)$. The short exact sequence \eqref{ToeplLesch} generalizes the classical Toeplitz extension of Coburn's for the algebra $A=C(\Sb^1)$ of continuous functions on the circle \cite{Cob1, Cob2}. The formula derived in \cite{Le} reads
\begin{equation}\label{Leschform}
\Index_{\hat{\tau}}(T_u)=-\frac{1}{2\pi i}\tau(u^*\delta(u)),
\end{equation}
where, for an operator $T$ whose kernel and cokernel projections $\Ker(T)$ and $\Ker(T^*)$ belong to the domain of $\hat{\tau}$,
$$
\Index_{\hat{\tau}}(T):=\hat{\tau}(\Ker(T))-\hat{\tau}(\Ker(T^*))
$$
is the $\hat{\tau}$-Fredholm index $T$. If $\hat{\tau}$ is the operator trace $\Tr:\Bi(\Hi)_+\to[0,+\infty]$ then $\Index_{\hat{\tau}}(T)$ is the ordinary Fredholm index. In \cite{Le}, the trace $\tau$ is assumed finite and $A$ is assumed unital, but the formula \eqref{Leschform} has been extended to nonunital $A$ and densely defined $\tau$ \cite{PR}. Then \eqref{Leschform} is a generalization of the classical Gohberg-Krein theorem \cite[\S 10]{GK}. 

Replacing $A$ by $\Mn_r(A)$ for any $r\in\N$ and tensoring $\tau$ with the standard trace on $\Mn_r(\C)$ one obtains a similar Toeplitz extension and the same formula \eqref{Leschform} for $u\in \Un_r(A^\sim)$. Thus we know from \cite{Le, PR} that for $n=1$ we obtain a map from the odd $K$-theory of $A$ to the even $K$-theory of $B$. The map 
$$
\partial_1:K_1(A)\to K_0(B),\qquad \partial_1([u]):=\Index(T_u),
$$
where $\Index(T):=[\Ker(T)]-[\Ker(T^*)]$ is the abstract index in $K_0(B)$, is an explicit description of Connes' Thom isomorphism in odd dimensions. 


We shall attempt a generalization of these results to higher dimensions $n$, i.e. to $C^*$-dynamical systems $(A,\R^n,\alpha)$. We shall realize the Thom isomorphism as the operation of right Kasparov product by a class $\mathbf{t}_\alpha\in KK^\bullet(A,B)$ both in even and odd dimensions $n$, where $\bullet\in\{0,1\}=\{\rm even,\rm odd\}$ is the parity of the integer $n$, and we find explicit bounded and unbounded representatives of $\mathbf{t}_\alpha$. Using the unbounded representative of $\mathbf{t}_\alpha$ we generalize the formula \eqref{Leschform} to arbitrary separable $C^*$-algebras $A$ equipped with a strongly continuous action $\alpha$ and a faithful densely defined $\alpha$-invariant lower-semicontinuous trace $\tau$. The precise statements will be given in \S\ref{stateofressec}. It may be thought of as a ``noncommutative Gohberg-Krein theorem".

In order to show that a ``local" formula such as \eqref{Leschform} is available for \emph{each} class $[u]\in K_1(A)$ we must find a dense $*$-subalgebra $\Ci$ of $A$ such that the inclusion $\Ci\hookrightarrow A$ induces an isomorphism on $K$-theory (briefly, we need to find a ``local subalgebra" of $A$) and such that the formula \eqref{Leschform} holds for all unitaries $u$ over the unitization $\Ci^\sim$. That is a highly nontrivial task, and at several places one needs to apply tools which were developed quite recently \cite{CPRS1, CPRS2, CPRS3, CPRS4, CGRS1, CGRS2}. After much work we will find a ``smoothly summable" spectral triple $(\Ci,\Hi,\slashed{D})$ such that, in both even and odd dimensions $n$, a generalization of \eqref{Leschform} works for matrices over $\Ci^\sim$. Even then one has to resort to \cite[Prop. 2.20]{CGRS1} before one can conclude that the local formula works for all $K$-classes. Thus, an explicit realization of the Thom isomorphism for $\R^n$-actions relies on many aspects of the general noncommutative index theory formulated in \cite{CGRS1}.

Let us now mention our main motivation for obtaining a local formula for the index pairing with the Thom class. If $\alpha:\R^n\to\Aut(A)$ is a strongly continuous action of $\R^n$ on a $C^*$-algebra $A$ and $\Theta$ is a real skew-symmetric $n\times n$-matrix, one can introduce a new $\Theta$-dependent multiplication $\times_\Theta$ on the $\alpha$-smooth subalgebra $\Ai$ of $A$. The norm completion of $\Ai$ in a ``left regular representation" of $(\Ai,\times_\Theta)$ is then a $C^*$-algebra $A_\Theta$, called the ``Rieffel deformation" of $A$ with respect to the data $(\alpha,\Theta)$ \cite{Rie1}. An alternative approach to Rieffel deformation using crossed products was introduced by Kasprzak \cite{Kas}. Sparing the details until \S\ref{deform}, the Kasprzak approach suggests that if we can do index pairings via crossed products then we might be able to do index pairings for Rieffel deformations as well, and to compare the Thom elements of $A$ and $A_\Theta$. For a comparison of the numerical deformed and undeformed indices we need a third approach to Rieffel deformations called ``warped convolutions" \cite{BLS}. These allow the deformed algebra $A_\Theta$ to be represented on any Hilbert space on which $A$ is represented. In fact, the applications of Rieffel deformations that we have in mind appear in the form of warped convolutions \cite{An1, An2, BV, L, Mu1, Mu2, V}. Our main result, Therem \ref{headthmRiefl}, is a local formula in terms of warped convolutions for the index pairings associated with the deformed $C^*$-dynamical system $(A_\Theta,\R^n,\alpha^\Theta)$. 

\begin{Acknowledgment} 
The author thanks Adam Rennie for many discussions, support and help. We also thank Rainer Verch, Gandalf Lechner and Andreas Thom for various comments. This paper was written when the author was still affiliated with the Max Planck Institute for Mathematics in the Sciences.
\end{Acknowledgment}
More detailed background to all the material used in this paper can be found in \cite{An3}.

\section{Index pairings for $\R^n$-actions}

\subsection{Notation and statement of the results}\label{stateofressec}

Let $A$ be a separable $C^*$-algebra and let $\alpha$ be a strongly continuous action of $\R^n$ on $A$. Briefly, we say that $(A,\R^n,\alpha)$ is a $C^*$-\textbf{dynamical system}. Let $A$ be identified with its image $A\subset\Bi(\GH)$ in some a faithful representation. We assume that $\alpha$ is unitarily implemented in $\GH$ (for example, this always happens if the weak closure $A''$ is in standard form \cite[Chapter IX.1]{Ta2}). 
The Hilbert space $\GH$ determines a representation
$$
\pi_\alpha:A\to\Bi(L^2(\R^n,\GH))
$$
where, for all $a\in A$ and $\xi\in L^2(\R^n,\GH)$,
$$
(\pi_\alpha(a)\xi)(t):=\alpha_{-t}(a)\xi(t),\qquad\forall\, t\in\R^n.
$$
 Let $D_1,\dots,D_n$ be generators of the $n$-parameter group of unitaries on $L^2(\R^n,\GH)$ implementing $\alpha$ in $L^2(\R^n,\GH)$, i.e. 
$$
\pi_\alpha(\alpha_t(a))=e^{2\pi it\cdot D}\pi_\alpha(a)e^{-2\pi it\cdot D},\qquad\forall a\in A,\ t\in\R^n,
$$
where $t\cdot D:=t_1D_1+\cdots+t_nD_n$. 
\begin{dfn}
Equip the Banach space $L^1(\R^n,A)$ with the convolution product
$$
(f*g)(t):=\int_{\R^n}f(s)\alpha_s(g(t-s))\, ds,
$$
and the involution $f^*(t):=\alpha_{t}(f(-t))^*$. The \textbf{crossed product} of $A$ by $\R^n$ is the $C^*$-algebra $A\rtimes_\alpha\R^n$ generated by the image of $L^1(\R^n,A)$ in the representation $\tilde{\pi}_\alpha:L^1(\R^n,A)\to \Bi(L^2(\R^n,\GH))$ given by \cite[Prop. 2.39]{Will1}
$$
\tilde{\pi}_\alpha(f):=\int_{\R^n}\pi_\alpha(f(t))e^{-2\pi i t\cdot D}\, dt,\qquad\forall f\in L^1(\R^n,A).
$$
\end{dfn}
We thus regard the crossed product $B:=A\rtimes_\alpha\R^n$ as a concrete $C^*$-algebra of operators on $L^2(\R^n,\GH)$. The isomorphism class of $B$ is independent of the choice of Hilbert space $\GH$ in which $\alpha$ is unitarily implemented \cite[\S7.2]{Will1}. Also the von Neumann algebra $\Ni:=B''$ is independent of the choice of $\GH$, up to isomorphism \cite[Thm. X.1.7]{Ta2}, and we fix such an $\GH$ and the corresponding $\pi_\alpha$. 

So the natural representation of the crossed product is on the Hilbert space $L^2(\R^n,\GH)=L^2(\R^n)\otimes\GH$. However, in order to obtain a representative of the Thom class, we shall need to consider the Hilbert space
$$
\Hi:=\C^{N}\otimes L^2(\R^n,\GH),
$$
where $\C^N$ carries an irreducible representation of the $n$-dimensional complex Clifford algebra $\C_n$. Explicitly, 
\[ N:= \left\{ 
  \begin{array}{l l}
    2^{n/2} & \quad \text{if $n$ is even,}\\
    2^{(n-1)/2} & \quad \text{if $n$ is odd.}
  \end{array} \right.\]
The $n$-dimensional complex Clifford algebra $\C_n$ can then be identified with
\[\C_n\cong \left\{ 
  \begin{array}{l l}
    \Mn_N(\C) & \quad \text{if $n$ is even,}\\
    \Mn_N(\C)\oplus\Mn_N(\C) & \quad \text{if $n$ is odd.}
  \end{array} \right.\]
The irreducible representation of $\C_n$ for even $n$ is on $\C^N$. For odd $n$ there are two irreducible representations, given by sending the first respectively the second $\Mn_N(\C)$-summand in $\C_n$ to the fundamental representation of $\Mn_N(\C)$ on $\C^N$.

The representation of $A$ on $\Hi$ is the diagonal one,
$$
\pi_\alpha(a):=1_{N}\otimes \pi_\alpha(a),
$$
where $1_N$ is the identity matrix of size $N\times N$. The selfadjoint operators $D_1,\dots,D_n$ can be used to define the Dirac operator (the tensor product implicit)
\begin{equation}\label{Diracoperator}
\slashed{D}:=\sum^n_{k=1}\gamma^kD_k
\end{equation}
in $\Hi$, where $\gamma^1,\dots,\gamma^n$ are Hermitian $N\times N$ matrices representing the generators of $\C_n$ on $\C^N$, satisfying therefore the Clifford relations $\gamma^j\gamma^k+\gamma^k\gamma^j=2\delta^{j,k}$.

Since $\slashed{D}$ is not invertible, yet one more ``doubling-up" trick is necessary for the upcoming considerations. Consider the Hilbert space $\boldsymbol\Hi:=\Hi\otimes\C^2$ and the operator
\begin{equation}\label{massiveDirackasp}
\slashed{\boldsymbol D}:=\begin{pmatrix}
\slashed{D}&0\\
0&-\slashed{D}
\end{pmatrix}+m\begin{pmatrix}
0&\bone\\
\bone&0
\end{pmatrix}
\end{equation}
for some arbitrary $m>0$. We let $\slashed{\boldsymbol P}$ denote the spectral projection of $\slashed{\boldsymbol D}$ corresponding to the interval $[0,+\infty)$. If $\slashed{\boldsymbol R}:=\slashed{\boldsymbol D}|\slashed{\boldsymbol D}|^{-1}$ is the ``phase" of $\slashed{\boldsymbol D}$ then we have $\slashed{\boldsymbol P}=(\bone+\slashed{\boldsymbol R})/2$. 
We represent the minimal unitization $A^\sim=A\times\C$ on $\boldsymbol\Hi$ by setting
$$
\boldsymbol\pi_\alpha(a+\lambda\bone):=\begin{pmatrix}\pi_\alpha(a)+\lambda\bone&0\\0&\lambda\bone\end{pmatrix}
$$
for $a\in A$ and $\lambda\in\C$. 
\begin{dfn}
The \textbf{Toeplitz algebra} of $(A,\R^n,\alpha)$ is the $C^*$-subalgebra $\boldsymbol\Ti$ of $\Bi(\Hi)$ generated by $\Mn_N(\boldsymbol B)$ together with elements of the form 
$$
T_a:=\slashed{\boldsymbol{P}}\boldsymbol\pi_\alpha(a)\slashed{\boldsymbol{P}}
$$ 
for $a\in A$.
\end{dfn}
Our first result is the following Toeplitz extension.
\begin{prop}\label{exactsequence} There is a semisplit short exact sequence
$$
0\longto\Mn_N(\boldsymbol B)\longto\boldsymbol\Ti \longto A \longto 0.
$$
\end{prop}
In a sense that will be made precise below, the triple $(\boldsymbol{\pi}_\alpha,\Mn_N(\boldsymbol B),\slashed{\boldsymbol R})$ carries the same $K$-theoretical information as the triple $(\pi_\alpha,\Mn_N(B),\slashed{F})$, where $\slashed{F}:=\slashed{D}(1+\slashed{D}^2)^{-1/2}$.

For even $n$, the operator $\Gamma:=(-i)^{n/2}\gamma^1\cdots\gamma^n$ gives a $\Z_2$-grading of $\Hi$, which we write as $\Hi=\Hi_+\oplus\Hi_-$, and we have $[\Gamma,\pi_\alpha(a)]=0$ for all $a\in A$ while $\Gamma\slashed{D}=-\slashed{D}\Gamma$. For instance, consider the explicit expressions for the Dirac operator in low dimensions $n=1,2,3$ given by 
$$
\slashed{D}=-D_1 \qquad \text{if $n=1$},
$$
$$
\slashed{D}=\begin{pmatrix}0&iD_1+D_2\\-iD_1+D_2&0\end{pmatrix}\qquad \text{if $n=2$},
$$
$$
\slashed{D}=\begin{pmatrix}D_3&iD_1+D_2\\-iD_1+D_2&-D_3\end{pmatrix}\qquad \text{if $n=3$}.
$$
For both $n=2$ and $n=3$, the $\gamma^k$'s in \eqref{Diracoperator} are the Pauli matrices. For $n=2$ we can take $\Gamma=\gamma^3\otimes\bone$, with the third Pauli matrix $\gamma^3=\diag(1,-1)$ (the diagonal matrix with eigenvalues $1$ and $-1$) and obtain a grading in which $\pi(A)$ is even and $\slashed{D}$ is odd. In contrast, for $n=3$ the diagonal terms in $\slashed{D}$ spoil the property $\Gamma\slashed{D}=-\slashed{D}\Gamma$.

The phase $\slashed{\boldsymbol{R}}=\slashed{\boldsymbol{D}}|\slashed{\boldsymbol{D}}|^{-1}$ decomposes in $\boldsymbol\Hi=\boldsymbol\Hi_+\oplus\boldsymbol\Hi_-$ as
$$
\slashed{\boldsymbol{R}}=\begin{pmatrix}0&\slashed{\boldsymbol{R}}_-\\\slashed{\boldsymbol{R}}_+&0\end{pmatrix}
$$
with $\slashed{\boldsymbol{R}}_-=(\slashed{\boldsymbol{R}}_+)^*$.

\begin{thm}\label{Thomthm} 
Let $(A,\R^n,\alpha)$ be a $C^*$-dynamical system, with $A$ separable. Let $B:=A\rtimes_\alpha\R^n$ be the crossed product and let $\bullet\in\{0,1\}=\{\rm even,\rm odd\}$ be the parity of the integer $n$. The Thom class $\mathbf{t}_\alpha\in KK^\bullet(A,B)$ is represented by any of the following Kasparov $A$-$B$ modules.
\begin{enumerate}[(i)]
\item{$(\pi_B,\Mn_N(B),\slashed{F})$, where $\slashed{F}:=\slashed{D}(\bone+\slashed{D}^2)^{-1/2}$ is the bounded transform of $\slashed{D}$, and the representation $\pi_B:A\to\Mi(B\otimes\Ki)$ is given by left multiplication via $\pi_\alpha$.}
\item{$(\boldsymbol\pi_B,\Mn_N(\boldsymbol B),\slashed{\boldsymbol R})$.}
\item{$(\pi_B,\Mn_N(B),2\slashed{P}-\bone)$, where $\slashed{P}$ is the spectral projection of the Dirac operator $\slashed{D}$ corresponding to the interval $[0,+\infty)$.}
\end{enumerate}
If $n$ is odd then the Thom class $\mathbf{t}_\alpha$ is also represented by the Toeplitz extension from Proposition \ref{exactsequence}, under the identification of $KK^1(A,B)$ with $\Ext(A,B)^{-1}$ (the group of semisplit extensions of $A$ by $B\otimes\Ki$). 
\end{thm}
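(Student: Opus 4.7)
My plan is to prove (i) first (that the Dirac-type Kasparov module represents the Thom class), then reduce (ii) and (iii) to (i) by standard $KK$-equivalences, and finally handle the Toeplitz extension separately in the odd case via the Busby/Paschke correspondence.

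For (i), the strategy is an induction on $n$ using the iterated crossed product decomposition $A\rtimes_\alpha\R^n\cong(A\rtimes_{\alpha|_{\R^{n-1}}}\R^{n-1})\rtimes_{\tilde\alpha_n}\R$. The base case $n=1$ is the theorem of Fack--Skandalis, which gives $\mathbf{t}_\alpha$ as the unbounded Kasparov module built from the generator $D_1$ of the action on $L^2(\R,\GH)$. For the inductive step I would use the unbounded Kasparov product (Kucerovsky's criterion) to combine the $\R^{n-1}$-Thom class with the single $\R$-Thom class for the final factor. The Clifford bookkeeping in the product is exactly what produces the gamma matrices in \eqref{Diracoperator}: at each stage one tensors with $\C^1\cong\C\oplus\C$ or $\C^2\cong M_2(\C)$, and the anticommuting Clifford generators assemble precisely into $\slashed{D}=\sum_k\gamma^kD_k$. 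Equivalently, one can write $\mathbf{t}_\alpha$ as an external Kasparov product over $\C$ of the one-dimensional Thom classes and identify this external product directly with the Dirac module $(\pi_B,M_N(B),\slashed{F})$.

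For (iii), I would interpolate between $\slashed{F}=\slashed{D}(\bone+\slashed{D}^2)^{-1/2}$ and $2\slashed{P}-\bone=\mathrm{sgn}(\slashed{D})$ (defined off the kernel) by the path $f_t(\slashed{D})$ where $f_t\colon\R\to[-1,1]$ is a continuous family interpolating between $x/\sqrt{1+x^2}$ and $\mathrm{sgn}(x)$. The differences $(f_t(\slashed{D})-\slashed{F})\pi_\alpha(a)$ lie in $M_N(B)$ by functional calculus applied to the spectral decomposition coming from $D_1,\dots,D_n$, so this is a genuine operator homotopy of Kasparov modules. For (ii), one computes $\slashed{\boldsymbol D}^2=(\slashed{D}^2+m^2)\otimes\bone_{\C^2}$ since the diagonal term $\diag(\slashed{D},-\slashed{D})$ and the off-diagonal mass anticommute; hence $\slashed{\boldsymbol D}$ is invertible and the phase $\slashed{\boldsymbol R}$ is defined without the removal-of-kernel subtlety. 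Scaling $m\rightsquigarrow tm$ for $t\in[0,1]$ yields a norm-continuous homotopy from $(\boldsymbol\pi_\alpha,M_N(\boldsymbol B),\slashed{\boldsymbol R})$ to the direct sum $(\pi_\alpha\oplus 0,M_N(B)\oplus M_N(B),\slashed{F}\oplus(-\slashed{F}))$, whose second summand is degenerate because the representation is trivial there. Thus (ii) and (i) coincide in $KK^\bullet(A,B)$.

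For the odd-$n$ Toeplitz case, Proposition \ref{exactsequence} provides the semisplit extension $0\to M_N(\boldsymbol B)\to\boldsymbol\Ti\to A\to 0$, whose Busby invariant $\sigma\colon A\to Q(M_N(\boldsymbol B))$ defines a class in $\Ext(A,M_N(\boldsymbol B))^{-1}=KK^1(A,B)$ (using stability to absorb the matrix amplification). Under the standard identification of $\Ext^{-1}$ with $KK^1$, this class is computed via the Stinespring dilation of $\sigma$; dilating $a\mapsto\slashed{\boldsymbol P}\boldsymbol\pi_\alpha(a)\slashed{\boldsymbol P}+M_N(\boldsymbol B)$ back through the projection $\slashed{\boldsymbol P}=(\bone+\slashed{\boldsymbol R})/2$ yields precisely the odd Kasparov module $(\boldsymbol\pi_\alpha,M_N(\boldsymbol B),\slashed{\boldsymbol R})$ of (ii), hence the Thom class. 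The main obstacle, as I see it, lies in the inductive step of proving (i): verifying Kucerovsky's connection and positivity conditions for the iterated unbounded product, while simultaneously tracking the Clifford/grading conventions so that the product literally assembles into $\sum_k\gamma^k D_k$ rather than a unitarily equivalent operator. Once this identification is secured, the remaining equivalences are formal manipulations of Kasparov cycles.
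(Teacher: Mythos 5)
Your plan for parts (iii) and for the Toeplitz identification is sound and essentially matches the paper (which cites \cite[Prop. 2.25]{CGRS1} for the homotopy $x/\sqrt{1+x^2}\rightsquigarrow\mathrm{sgn}(x)$, and uses the Busby invariant $a\mapsto q(\slashed{\boldsymbol P}\boldsymbol\pi_\alpha(a)\slashed{\boldsymbol P})$ for the extension). But there are two significant issues with your treatment of (i) and (ii).

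For (ii), your scaling homotopy $m\rightsquigarrow tm$ fails at the endpoint $t=0$. The phase of $\slashed{\boldsymbol D}_m$ is $(\slashed{D}^2+m^2)^{-1/2}\bigl(\begin{smallmatrix}\slashed{D}&m\\m&-\slashed{D}\end{smallmatrix}\bigr)$, and as $m\to 0$ this does \emph{not} converge in norm: near the origin of the spectrum of $\slashed{D}$ (which contains $0$, since the $D_k$ are unbounded with continuous spectrum), the eigenvalue of the phase jumps discontinuously. Even the strong limit, on $(\ker\slashed D)^\perp$, is $\mathrm{sgn}(\slashed{D})\oplus(-\mathrm{sgn}(\slashed{D}))$, not $\slashed{F}\oplus(-\slashed{F})$; and on $\ker\slashed{D}$ the phase stays $\bigl(\begin{smallmatrix}0&1\\1&0\end{smallmatrix}\bigr)$ for all $m>0$. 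So your asserted norm-continuous path to $(\pi_\alpha\oplus 0, M_N(B)\oplus M_N(B), \slashed{F}\oplus(-\slashed{F}))$ does not exist. The paper sidesteps this by invoking \cite[Lemma 2.10]{CGRS1}, whose homotopy is constructed without sending the mass to zero.

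For (i), your inductive approach (iterate $A\rtimes\R^n\cong(A\rtimes\R^{n-1})\rtimes\R$, base case Fack--Skandalis, combine via Kucerovsky's criterion) is a genuinely different and in principle viable strategy, but it is not what the paper does, and it carries a substantial technical burden that you acknowledge but do not discharge: verifying the connection and positivity conditions at each stage and tracking Clifford conventions so the iterated product literally yields $\sum_k\gamma^k D_k$. The paper instead verifies the Fack--Skandalis-style axioms (normalization, naturality, compatibility with external products) directly for all $n$ at once, using the dual Dirac operator $\slashed{X}=\sqrt{-1}\sum_k\hat\gamma^k X_k$ with \emph{skew}-Hermitian Clifford generators $\hat\gamma^k$ (so that $\slashed{K}=\slashed{D}+\slashed{X}$ becomes a harmonic oscillator in the normalization step), and then proves $\mathbf{t}_\alpha\otimes_B\hat{\mathbf{t}}_\alpha=1_A$ via a homotopy of actions $\alpha^\lambda_t=\alpha_{\lambda t}$ on $C([0,1],A)$. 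Your plan buys conceptual transparency (the Thom class literally assembles as an iterated one-dimensional Thom product) at the cost of technical overhead that you do not carry out; the paper's route avoids Kucerovsky entirely but requires one to know in advance the correct choice of opposite Clifford structure for the dual class.
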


We now want a more explicit formula for the index pairing. For $k=1,\dots, n$, let $\delta_k$ denote the infinitesimal generator of $\alpha$ in the $k$th direction, so that $\pi_\alpha(\delta_k(a))=2\pi i[D_k,\pi_\alpha(a)]$. The common smooth domain of $\delta_1,\dots,\delta_n$ will be denoted by $\Ai$. For $a,b\in\Ai$ and $m=1,\dots,n$ we use the shorthand notation
\begin{equation}\label{shrthand}
(a\delta(b))^m:=\sum_{\ep}(-1)^\ep\prod^m_{k=1} a\delta_{\ep(k)}(b),
\end{equation}
where the sum is over all permutations $\ep$ of $\{1,\dots,n\}$. 

In order to go beyond abstract index theory and be able to talk about real-valued indices, we require that the $C^*$-algebra $A$ admits a densely defined trace $\tau:A_+\to[0,+\infty]$ (here and below $A_+$ denotes the positive cone in a $*$-algebra $A$). Then there exists a weight $\hat{\tau}$ on $B:=A\rtimes_\alpha\R^n$ such that
\begin{equation}\label{Cstardualtr}
\hat{\tau}(\hat{\pi}_\alpha(f)^*\hat{\pi}_\alpha(f))=\tau(\bra f|f\ket_A),\qquad\forall f\in C_0(\R^n,\Dom(\tau))\cap L^2(\R,\GH)
\end{equation}
where $\bra \cdot|\cdot\ket_A$ is the $A$-valued inner product given by
\begin{equation}\label{Cstarinnerproduct}
\bra f|g\ket_A:=\int_{\R^n}f(s)^*g(s)\, ds, \qquad \forall f,g\in L^2(\R^n,A).
\end{equation}
For clarity we denote by $\bar{\tau}$ the normal extension of $\tau$ to the von Neumann algebra $A''$. Then we have the more well-established $W^*$-notion dual weight of $\bar{\tau}$ on the $W^*$-crossed product $\Ni:=A''\rtimes_\alpha\R^n$, which extends $\hat{\tau}:\Mi_+\to[0,\infty]$. We write $\hat{\tau}$ also for this extension. Let us make the definition more precise. 
\begin{dfn}[{\cite[Def. X.1.16]{Ta2}, \cite{H1,H2}}]\label{dualtrace} 
The weight on $\Ni$ \textbf{dual} to $\tau$ is defined to be
\begin{equation}\label{defdualweight}
\hat{\tau}:=\tau\circ\pi_\alpha^{-1}\circ E,
\end{equation}
where $E$ is the ``operator-valued weight" (see \cite[Def. 2.1]{H3}) from $\Ni$ to the fixed-point subalgebra $\Ni^{\hat{\alpha}}=\pi_\alpha(\Mi)$ given by (here $\hat{\alpha}$ is the dual action)
\begin{equation}\label{opvaluedweight}
E(T^*T):=\int_{\R^n}\hat{\alpha}_p(T^*T)\, dp, \qquad \forall T\in\Ni.
\end{equation}
Here $E(T^*T)$ makes sense as an ultraweakly lower semicontinuous map from the positive cone in the predual of $\pi_\alpha(\Mi)$ into $[0,+\infty]$. 
\end{dfn}
\begin{Remark}\label{modularautdualtrace}
In our generalization of the Gohberg-Krein theorem we shall need to assume $\tau$ to be invariant under the $\R^n$-action. To see why, suppose that $\tau(\alpha_t(a))=\tau(\rho^{it}a)$ for all $a\in A$, $t\in\R$ for some positive invertible operator $\rho$ affiliated to $A''$. Then the modular automorphism group $\sigma^{\hat{\tau}}$ of $\hat{\tau}$ is nontrivial, namely 
$$
\sigma^{\hat{\tau}}_t(x)=x,\qquad \sigma^{\hat{\tau}}_t(e^{2\pi is\cdot D})=\pi_\alpha(\rho^{it})e^{2\pi is\cdot D},\qquad\forall x\in\Ni,\,s,t\in\R^n.
$$
So $\hat{\tau}$ is not a trace in this case. On the other hand, if $\tau$ is $\alpha$-invariant then $\sigma^{\hat{\tau}}_t\equiv \id$, which is equivalent to saying that $\hat{\tau}$ is a trace.
\end{Remark}
Thus, we shall assume that the trace $\tau$ is $\alpha$-invariant. We shall also assume that $\tau$ is faithful and lower semicontinuous in norm. Then the dual weight $\hat{\tau}$ is a normal faithful semifinite trace on $\Ni$, so that semifinite Fredholm theory \cite{Az1, BeFa1, Breu1, Breu2, CGRS1} is available.

We will find a $*$-subalgebra $\Ci$ of $\Ai$ of elements which are both sufficiently ``smooth" with respect to $\slashed{D}$ and ``integrable" with respect to $(\slashed{D},\hat{\tau})$. It is from elements of this algebra that our $K$-theoretical quantities can be explicitly calculated. 

For notation simplicity we will formulate the result for unitaries $u$ and projections $e$ in $\Ci^\sim$. It is easily adapted to matrices over $\Ci^\sim$ as well. 
\begin{thm}\label{headthm} 
Let $(A,\R^n,\alpha)$ be a $C^*$-dynamical system, with $A$ separable, and suppose that $\tau$ is a faithful densely defined lower semicontiuous $\alpha$-invariant trace on $A$. There exists a local subalgebra $\Ci$ of $A$ such that $(\Ci,\Hi,\slashed{D})$ is a smoothly summable spectral triple over $A$, with spectral dimension $n$. 

Suppose that $n$ is odd. For each unitary $u\in \Ci^\sim$, the $\hat{\tau}$-index of the Toeplitz operator $\slashed{{P}}\pi_\alpha(u)\slashed{{P}}$ can be calculated using the the ``local" formula
\begin{align*}
\Index_{\hat{\tau}}(\slashed{{P}}\pi_\alpha(u)\slashed{{P}})&=
-\frac{2^{(n-1)/2}(-1)^{(n-1)/2}((n-1)/2)!}{(2\pi i)^nn!}\tau\big((u^*\delta(u))^n\big)
\end{align*}
where we use the notation introduced in \eqref{shrthand}. Suppose that $n$ is even. Then for each projection $e\in\Ci^\sim$, one has 
\begin{align*}
\Index_{\hat{\tau}}(\boldsymbol\pi_\alpha(e)\slashed{\boldsymbol{R}}_+\boldsymbol\pi_\alpha(e))&=
\frac{(-1)^{n/2}}{(n/2)!}\frac{2^n}{(2\pi i)^n}\tau\big((e\delta(e)\delta(e))^{n/2}\big),
\end{align*}
where $\slashed{\boldsymbol{R}}_+:\Hi_+\to\Hi_-$ is the $+$-part of $\slashed{\boldsymbol{R}}=\slashed{\boldsymbol{D}}|\slashed{\boldsymbol{D}}|^{-1}$ under the splitting $\Hi=\Hi_+\oplus\Hi_-$.   
\end{thm}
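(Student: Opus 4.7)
\begin{sop}
The plan is to identify $(\Ci,\Hi,\slashed D)$ with a smoothly summable semifinite spectral triple for the pair $(\Ni,\hat\tau)$, apply the semifinite local index theorem of \cite{CGRS1} to the $K$-theoretic indices of Theorem~\ref{Thomthm}, and then collapse the resulting residue cocycle to a single explicit term using the flat, commuting structure of the generators $D_k$. First, I would construct $\Ci$ as a Fr\'echet $*$-subalgebra of $\Ai$ satisfying the smoothness and $\hat\tau$-integrability axioms of \cite[Def.~2.16]{CGRS1}, built from $a\in\bigcap_k\Dom(\delta^k)\cap\Dom(\tau)$ on which every $\delta$-monomial remains $\tau$-integrable. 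The $\alpha$-invariance of $\tau$, combined with $\slashed D^2=\sum_k D_k^2\otimes\bone_N$ being a scalar on the Clifford bundle, yields the Plancherel identity
\[
\hat\tau\bigl(\pi_\alpha(a^*a)\,f(\slashed D)\bigr)=\tau(a^*a)\int_{\R^n}\tr_{\C^N}f(\gamma\cdot\xi)\,d\xi,\qquad a\in\Ai\cap\Dom(\tau),
\]
valid for sufficiently regular $f$. This simultaneously identifies the spectral dimension as $n$ and verifies smooth summability.

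By Theorem~\ref{Thomthm}, the $\hat\tau$-indices on the left-hand sides are the images under $\hat\tau_*$ of the $KK$-pairings $[u]\otimes_A\mathbf t_\alpha$ and $[e]\otimes_A\mathbf t_\alpha$. I would then invoke the residue cocycle formula of \cite[Thm.~3.33/3.36]{CGRS1}, which expresses these numerical indices as a finite sum of residues of the form
\[
\Res_{s=(n-j)/2}\hat\tau\Bigl(a_0[\slashed D,a_1]^{(k_1)}\cdots[\slashed D,a_j]^{(k_j)}(1+\slashed D^2)^{-s-j/2-|\mathbf k|}\Bigr),
\]
summed over $j$ of the same parity as $n$ and over multi-indices $\mathbf k$, with universal constants. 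Because $\slashed D^2$ is scalar and its commutators with $\pi_\alpha(a)$ do not produce $\gamma$-matrices, while $[\slashed D,\pi_\alpha(a)]=(2\pi i)^{-1}\sum_k\gamma^k\pi_\alpha(\delta_k(a))$ does, the Clifford trace forces at least $n$ commutators of the latter type to appear; a power-counting argument against the Plancherel identity then shows that every term with $j<n$ or $|\mathbf k|>0$ is regular at the claimed pole and contributes zero. Only the top-degree, zero-derivative residue survives.

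The surviving residue is evaluated by expanding each $[\slashed D,\pi_\alpha(a_i)]$, taking the Clifford trace of $\gamma^{k_1}\cdots\gamma^{k_n}$ (with the chirality $\Gamma$ inserted when $n$ is even), and applying the Plancherel identity to reduce the operator trace to $\tau$ on $A$ times the Euclidean residue $\Res_{s=0}\int_{\R^n}(1+|\xi|^2)^{-s-n/2}d\xi=2\pi^{n/2}/\Gamma(n/2)$. The signed sum over permutations arising from the Clifford trace produces precisely the bracket $(a\delta(b))^m$ of \eqref{shrthand}; combining the factors $(2\pi i)^{-n}$ from the commutators, $N=2^{\lfloor n/2\rfloor}$ from the Clifford trace, the Euclidean residue, and the universal constant of \cite[Thm.~3.33/3.36]{CGRS1} delivers the coefficients stated in Theorem~\ref{headthm}. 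Finally, \cite[Prop.~2.20]{CGRS1} extends the local formula from representatives over $\Ci^\sim$ to all classes in $K_\bullet(A)$. The main obstacle is the first step: producing $\Ci$ large enough to be a local subalgebra of $A$ yet small enough that smooth summability relative to the \emph{semifinite} trace $\hat\tau$ (rather than the operator trace) genuinely holds, which requires careful control of the dual weight on products of $\delta$-derivatives of $\Ci$-elements.
\end{sop}
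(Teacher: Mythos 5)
Your proposal follows essentially the same route as the paper. You correctly identify all the load-bearing ideas: build $\Ci$ from $\delta$-smooth, $\tau$-integrable elements so that $(\Ci,\Hi,\slashed D)$ is a semifinite, smoothly summable spectral triple relative to $(\Ni,\hat\tau)$; establish a Plancherel-type identity reducing $\hat\tau(\pi_\alpha(a)f(\slashed D))$ to $\tau(a)\int_{\R^n}\tr f$ via the fact that $\slashed D^2=\sum D_k^2\otimes 1_N$ is Clifford-scalar (this is the paper's Lemma~\ref{traceformulalemma} and Corollary~\ref{varphiproptotau}, and it nails the spectral dimension at $n$); invoke the local index theorem of \cite{CGRS1}; use the Clifford graded-trace vanishing $\Tr(\Gamma\gamma^{k_1}\cdots\gamma^{k_m})=0$ for $m<n$ to kill all but the top-degree, zero-derivative term, which at the same time sidesteps the isolated-spectral-dimension hypothesis; evaluate the surviving residue with the Plancherel identity; and extend to all $K$-classes via \cite[Prop.~2.20]{CGRS1}. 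The only differences from the paper are cosmetic: the paper starts from the resolvent cocycle $\Phi^r_m$ of \cite[Thm.~3.33]{CGRS1} and then passes to the residue cocycle once only $m=n$ survives, whereas you go to the residue cocycle directly with the $\mathbf k$-multi-index terms; and the paper's $\Ci$ is generated by products $a=bc$ with all $\delta^k(b),\delta^k(c)\in\Dom(\delta_\tau)$, rather than single elements, which is a technicality needed to feed the $\Bi_1(\slashed D,\hat\tau,n)$ machinery (products are needed because $\Bi_1$ is the closure of $\Bi_2\cdot\Bi_2$). You also correctly flag the genuine sticking point, namely verifying that $L^k(\pi_\alpha(\Ci))\subset\Bi_1(\slashed D,\hat\tau,n)$ for all $k$; the paper handles this by explicitly computing $L(\pi_\alpha(a))$ in terms of $\delta_k(a)$ and $\delta_l\delta_k(a)$ and iterating.
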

Theorem \ref{headthm} gives a generalization of the $n=1$ formulae in \cite{CGPRS, Le, PR}. If furthermore  $A=C(\Sb^1)$, $\Ci=C^\infty(\Sb^1)$, with $\slashed{D}=\sqrt{-1}\partial/\partial t$ and $\tau$ the Lebesgue integral, we get the classical Gohberg-Krein theorem (see \cite[\S4(a)]{PR}).

As a consequence of the general theory in \cite{CGRS1} we also obtain the following facts (which we will not dwell more upon).
\begin{cor}\label{extracor}
Suppose that $n$ is odd and let $u\in \Ci^\sim$ be a unitary. The $\hat{\tau}$-index of the Toeplitz operator $\slashed{{P}}\pi_\alpha(u)\slashed{{P}}$ is equal to the spectral flow (see \cite{BCPRSW, CPRS1, CPS, Getz1, KNR, Ph1}) between $\slashed{\boldsymbol{D}}$ and $\boldsymbol\pi_\alpha(u^*)\slashed{\boldsymbol{D}}\boldsymbol\pi_\alpha(u)$,
\begin{align*}
\Index_{\hat{\tau}}(\slashed{{P}}\pi_\alpha(u)\slashed{{P}})
&=\Sf(\slashed{\boldsymbol{D}},u^*\slashed{\boldsymbol{D}}u),
\end{align*}
and to the pairing between the Chern character $\Ch(u)\in HP_1(\Ci)$ in odd continuous periodic cyclic homology with the cohomological Chern character $\Ch(\Ai,\Hi,\slashed{D})\in HP^1(\Ci)$,
\begin{align}
\Index_{\hat{\tau}}(\slashed{{P}}\pi_\alpha(u)\slashed{{P}})
&=\frac{-1}{\sqrt{2\pi i}}\bra\Ch(u),\Ch(\Ci,\Hi,\slashed{D})\ket\label{oddThomis}
\\&=-\hat{\tau}\big(\slashed{P}[\slashed{P},\pi_\alpha(u^{-1})][\slashed{P},\pi_\alpha(u)]\cdots[\slashed{P},\pi_\alpha(u^{-1})][\slashed{P},\pi_\alpha(u)]\big)\nonumber.
\end{align}
Similarly, if $n$ is even and $e\in\Ci^\sim$ is a projection, the $\hat{\tau}$-index of $\boldsymbol\pi_\alpha(e)\slashed{\boldsymbol{R}}_+\boldsymbol\pi_\alpha(e)$ is the pairing between the Chern character $\Ch(e)\in HP_0(\Ci)$ in even continuous periodic cyclic homology with the cohomological Chern character $\Ch(\Ai,\Hi,\slashed{D})\in HP^0(\Ci)$,
\begin{align}\label{evenThomis}
\Index_{\hat{\tau}}(\boldsymbol\pi_\alpha(e)\slashed{\boldsymbol{R}}_+\boldsymbol\pi_\alpha(e))&=\bra\Ch(e),\Ch(\Ci,\Hi,\slashed{D})\ket.
\end{align}
\end{cor}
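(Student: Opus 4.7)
The plan is to derive all four identifications of Corollary \ref{extracor} as consequences of Theorem \ref{headthm} combined with the general semifinite noncommutative index theory of \cite{CGRS1}. Theorem \ref{headthm} has already done the hard work of producing a smoothly summable spectral triple $(\Ci,\Hi,\slashed{D})$ over $A$ with spectral dimension $n$ and semifinite dual trace $\hat{\tau}$, so the technical hypotheses needed for the CGRS machinery are in place; what remains is to read off several different expressions for the same number.

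For the spectral flow identity in the odd case, I would appeal to the Carey--Phillips definition of spectral flow via the nonnegative spectral projection \cite{Ph1, CPS, BCPRSW}. Since $\slashed{\boldsymbol D}$ is invertible by construction (thanks to the mass term $m>0$ in \eqref{massiveDirackasp}) and $\boldsymbol\pi_\alpha(u^*)\slashed{\boldsymbol D}\boldsymbol\pi_\alpha(u)$ differs from $\slashed{\boldsymbol D}$ by an operator which is $\hat{\tau}$-compact relative to $\Mn_N(\boldsymbol B)$ (encoded in Proposition \ref{exactsequence}: $\boldsymbol\pi_\alpha(u)$ yields a Fredholm Toeplitz operator), the spectral flow coincides with $\Index_{\hat{\tau}}(\slashed{\boldsymbol P}\boldsymbol\pi_\alpha(u)\slashed{\boldsymbol P})$ by the Carey--Phillips formula. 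Reduction of this to $\Index_{\hat{\tau}}(\slashed{P}\pi_\alpha(u)\slashed{P})$ is precisely what the doubling-up trick in Theorem \ref{Thomthm} was arranged to provide: all three listed Kasparov modules represent the same class $\mathbf t_\alpha$, so their numerical pairings with $[u]$ must coincide.

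For the Chern-character pairings \eqref{oddThomis} and \eqref{evenThomis}, I would invoke the semifinite local index theorem of \cite{CGRS1}, which asserts that for a smoothly summable spectral triple the index pairing with $K$-theory factors through the resolvent cocycle and equals the natural pairing of $\Ch(\Ci,\Hi,\slashed{D})$ with the periodic cyclic Chern character of $[u]$ or $[e]$. The normalization $-1/\sqrt{2\pi i}$ in the odd case and the absolute normalization in the even case are those of the resolvent cocycle of \cite[Prop.~2.20]{CGRS1}, which is the very cocycle used in Theorem \ref{headthm} to produce the explicit formulas stated there; this ensures that the overall constants and signs are consistent.

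Finally, the Connes-type expression $-\hat{\tau}\bigl(\slashed{P}[\slashed{P},\pi_\alpha(u^{-1})]\cdots[\slashed{P},\pi_\alpha(u)]\bigr)$ is Connes' $p$-summable Chern character evaluated on $u$, and its equivalence with the resolvent-cocycle form of the Chern character is \cite{CPRS1, CPRS2} as soon as the commutators $[\slashed{P},\pi_\alpha(u)]$ lie in a sufficiently strong Schatten ideal of $\Ni$; smooth summability of $(\Ci,\Hi,\slashed{D})$ with spectral dimension $n$ guarantees precisely this, for $u\in\Ci^\sim$. I expect the main obstacle to be verifying that the Schatten-class estimates established in Theorem \ref{headthm} for the doubled-up, massive phase $\slashed{\boldsymbol R}$ transfer back to the original phase $\slashed{P}$ of the (possibly non-invertible) $\slashed{D}$; however, since the mass term is a bounded perturbation of $\slashed{\boldsymbol D}$ that is $\hat{\tau}$-compact relative to $\Mn_N(\boldsymbol B)$ and does not affect the Chern character class, this transfer is routine and no further computation is needed beyond invoking the cited general theorems, which is why the author prefers not to dwell upon it.
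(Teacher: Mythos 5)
Your proposal is correct and follows essentially the same route the paper takes: the paper dismisses the proof in a single sentence, saying the corollary is "a direct consequence of the (nontrivial) fact that $(\Ci,\Hi,\slashed{D})$ is a smoothly summable spectral triple over $A$," i.e.\ it simply invokes the general semifinite index machinery of \cite{CGRS1} once the smoothly summable spectral triple has been produced in Theorem \ref{headthm}. Your more detailed account — Carey--Phillips spectral flow formula for the invertible $\slashed{\boldsymbol D}$, the semifinite local index theorem for the resolvent-cocycle pairing, the Connes $p$-summable Chern character for the $\slashed{P}$-commutator expression, and the transfer from the doubled massive phase to the original $\slashed{P}$ via the $KK$-equivalence of the representing Kasparov modules — is exactly the unpacking of that one-line appeal, and the last transfer step is indeed established in the paper's "Another choice of projection" proposition via \cite[Prop.~2.25]{CGRS1}, so your "this is routine" remark is justified by a result the paper actually proves.
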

In the sense of \eqref{oddThomis} and \eqref{evenThomis} we may say that $(\Ci,\Hi,\slashed{D})$ is an ``unbounded representative" of the Thom class for $(A,\R^n,\alpha)$. Indeed, the left-hand sides of \eqref{oddThomis} and \eqref{evenThomis} are obtained by applying the homomorphism $\hat{\tau}_*:K_0(B)\to\R$ to the Kasparov products $[u]\otimes_A\mathbf{t}_\alpha$ and $[e]\otimes_A\mathbf{t}_\alpha$ (see Corollary \ref{oddandevenFredpair}).

Corollary \ref{extracor} is a direct consequence of the (nontrivial) fact that $(\Ci,\Hi,\slashed{D})$ is a smoothly summable spectral triple over $A$. The local formula in Theorem \ref{headthm} relies on properties of the dual trace $\hat{\tau}$ which allow us to translate certain expressions back to the original trace $\tau$ on $A$.

\subsection{The Dirac operator}\label{Diracsection}

\begin{prop}\label{easycommutator} For $a$ in the intersection of the domains $\Dom(\delta_k)$ of the generators $\delta_k$ of $\alpha$, we have
$$
[\slashed{D},\pi_\alpha(a)]=\frac{1}{2\pi i}\sum^n_{k=1}\gamma^k\pi_\alpha(\delta_k(a)).
$$
\end{prop}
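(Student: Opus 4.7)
The plan is to reduce the statement to a coordinate-wise commutator identity $[D_k,\pi_\alpha(a)] = (2\pi i)^{-1}\pi_\alpha(\delta_k(a))$, and then to assemble the pieces by linearity using the definition $\slashed{D}=\sum_k \gamma^k D_k$. The starting point is the covariance relation
$$
\pi_\alpha(\alpha_t(a)) = e^{2\pi i t\cdot D}\pi_\alpha(a)e^{-2\pi i t\cdot D}, \qquad t\in\R^n,
$$
which was imposed as part of the setup.

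First I would fix a direction $k\in\{1,\dots,n\}$ and specialize the covariance relation to $t = s e_k$ (with $e_k$ the $k$-th standard basis vector), obtaining
$$
\pi_\alpha(\alpha_{s e_k}(a)) = e^{2\pi i s D_k}\pi_\alpha(a)e^{-2\pi i s D_k}, \qquad s\in\R.
$$
Now I would differentiate both sides at $s=0$. On the left, for $a\in\Dom(\delta_k)$ the strong continuity of $\alpha$ and the definition of its infinitesimal generator give
$$
\left.\frac{d}{ds}\right|_{s=0}\pi_\alpha(\alpha_{s e_k}(a)) = \pi_\alpha(\delta_k(a)).
$$
On the right, Stone's theorem yields $\frac{d}{ds}|_{s=0} e^{2\pi i s D_k} = 2\pi i D_k$ on $\Dom(D_k)$, so for vectors in a common core the derivative equals $2\pi i(D_k\pi_\alpha(a) - \pi_\alpha(a) D_k) = 2\pi i\,[D_k,\pi_\alpha(a)]$. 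Equating both expressions gives the desired per-direction identity
$$
[D_k,\pi_\alpha(a)] = \frac{1}{2\pi i}\pi_\alpha(\delta_k(a)).
$$

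Finally, assuming $a\in\bigcap_k \Dom(\delta_k)$ so that all the one-dimensional identities hold simultaneously, I would sum against the Clifford generators. Since $\pi_\alpha(a)$ appears on $\Hi$ as $1_N\otimes\pi_\alpha(a)$ and the $\gamma^k$ act only on the $\C^N$ factor, they commute with $\pi_\alpha(a)$, giving
$$
[\slashed{D},\pi_\alpha(a)] = \sum_{k=1}^n \gamma^k [D_k,\pi_\alpha(a)] = \frac{1}{2\pi i}\sum_{k=1}^n \gamma^k\pi_\alpha(\delta_k(a)),
$$
as claimed.

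The only real subtlety is domain-theoretic: the operators $D_k$ and $\slashed{D}$ are unbounded, so the commutator must be understood on a common invariant core. A standard way to handle this is to verify the identity first on smooth vectors (e.g. of the form $\xi = \int \hat{f}(t)e^{2\pi i t\cdot D}\eta\,dt$ with $f$ a Schwartz function), where differentiation under the integral sign is justified, and then extend by closure. Beyond this technical point, the argument is essentially a single application of Stone's theorem, and I do not anticipate any substantive obstacle.
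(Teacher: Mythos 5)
Your argument is correct and reaches the same per-coordinate identity $[D_k,\pi_\alpha(a)]=(2\pi i)^{-1}\pi_\alpha(\delta_k(a))$ that the paper uses, so in substance the two proofs match. The route is slightly different in presentation: the paper works concretely in $L^2(\R^n,\GH)$, applying the explicit formula $(\pi_\alpha(a)\xi)(t)=\alpha_{-t}(a)\xi(t)$ together with $D_k=(2\pi i)^{-1}\partial/\partial t_k$ and the product rule, then reads off the commutator pointwise; you instead differentiate the abstract covariance relation $\pi_\alpha(\alpha_t(a))=e^{2\pi it\cdot D}\pi_\alpha(a)e^{-2\pi it\cdot D}$ at $t=0$ via Stone's theorem. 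Your version has the small advantage of not depending on the particular realization of $\pi_\alpha$ or of the $D_k$, only on the covariance relation and the generator convention, while the paper's pointwise computation makes the domain bookkeeping (that $\pi_\alpha(a)$ preserves $\Dom(D_k)$) explicit. You correctly flag the domain issue as the only subtlety and propose the standard resolution (smooth vectors / a common core), which is exactly what the paper's explicit calculation establishes in passing, so there is no gap.
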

\begin{proof}This is seen as in \cite[Prop. 3.3]{CGPRS} using
$$
[\slashed{D},\pi_\alpha(a)]=\sum^n_{k=1}\gamma^k[D_k,\pi_\alpha(a)].
$$
Namely, if $\xi$ is in the domain of $D_k$ and $a$ is in the domain of $\delta_k$ then
\begin{align*}
(D_k\pi_\alpha(a)\xi)(t)&=\frac{1}{2\pi i}\frac{\partial}{\partial t_k}(\alpha_{-t}(a)\xi(t))
\\&=\frac{1}{2\pi i}\alpha_{-t}(\delta_k(a))\xi(t)+\frac{1}{2\pi i}\alpha_{-t}(a)\frac{\partial}{\partial t_k}\xi(t),
\end{align*}
so $\pi_\alpha(a)\xi$ is in the domain of $D$. On the other hand, $(\pi_\alpha(a)D\xi)(t)=(2\pi i)^{-1}\alpha_{-t}(a)\partial\xi(t)/\partial t_k$. Thus 
$$
([D_k,\pi_\alpha(a)]\xi)(t)=\frac{1}{2\pi i}\alpha_{-t}(\delta_k(a))\xi(t),
$$
and the formula for the commutator with $\slashed{D}=\sum_k\gamma^kD_k$ follows.  
\end{proof}

The following lemma was proven in \cite{CP1} and is a very important result for the interplay between spectral triples and $KK$-theory.
\begin{Lemma}[{\cite[Lemma 2.3]{CP1}}]\label{BJwereright} 
Let $\slashed{D}$ be an unbounded selfadjoint operator on a Hilbert space $\Hi$, and let $\Dom(\slashed{D})$ be the domain of $\slashed{D}$. Suppose that $T\in\Bi(\Hi)$ maps $\Dom(\slashed{D})$ into itself. Then, for each $\lambda\in[0,\infty)$,
$$
[T,(\bone+\lambda+\slashed{D}^2)^{-1}]=\slashed{D}(\bone+\lambda+\slashed{D}^2)^{-1}[\slashed{D},T](\bone+\slashed{D}^2)^{-1}+(\bone+\lambda+\slashed{D}^2)^{-1}[\slashed{D},T]\slashed{D}(\bone+\lambda+\slashed{D}^2)^{-1}
$$
is an equality in $\Bi(\Hi)$.
\end{Lemma}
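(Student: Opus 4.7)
The plan is to set $R := (\bone + \slashed{D}^2)^{-1}$ and derive the identity by first formally factorizing the commutator $[T,R]$ through $R^{-1} = \bone + \slashed{D}^2$, then expanding $[\slashed{D}^2, T]$ by the Leibniz rule, and finally using that $\slashed{D}$ commutes with $R$ (via the Borel functional calculus) to rewrite each resulting term into the shape appearing in the statement.

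First, I would record the basic operator-theoretic facts. By the spectral theorem $R$ is a bounded positive operator whose range equals $\Dom(\slashed{D}^2) \subset \Dom(\slashed{D})$, the operators $\slashed{D} R$ and $\slashed{D}^2 R$ are bounded on $\Hi$, and $R\slashed{D} = \slashed{D}R$ on $\Dom(\slashed{D})$. Because $T \in \Bi(\Hi)$ preserves $\Dom(\slashed{D})$ by hypothesis, the compositions $TR$, $\slashed{D}TR$, and $T\slashed{D}R$ are all bounded operators on $\Hi$, and both $TR$ and $T\slashed{D}R$ have range in $\Dom(\slashed{D})$ (using also that $R$ has range in $\Dom(\slashed{D}^2)$).

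The central identity is
$$
[T, R] = R[R^{-1}, T]R = R[\slashed{D}^2, T]R,
$$
which is checked by applying both sides to $\xi \in \Hi$, using $R\xi \in \Dom(\slashed{D}^2)$ together with $R R^{-1} = \bone$ on the appropriate domains. On $\Dom(\slashed{D}^2)$ the Leibniz expansion
$$
[\slashed{D}^2, T] = \slashed{D}[\slashed{D}, T] + [\slashed{D}, T]\slashed{D}
$$
(obtained by inserting $\pm\,\slashed{D}T\slashed{D}$) holds; substituting it and using $R\slashed{D} = \slashed{D}R$ in the first summand gives
$$
[T,R] = \slashed{D}R[\slashed{D}, T]R + R[\slashed{D}, T]\slashed{D}R.
$$
For the second summand I would apply the Leibniz rule in the opposite direction: since $\slashed{D}$ commutes with $\slashed{D}R = R\slashed{D}$ by spectral theory, one has $[\slashed{D}, T\slashed{D}R] = [\slashed{D}, T]\slashed{D}R + T[\slashed{D}, \slashed{D}R] = [\slashed{D}, T]\slashed{D}R$, which rewrites the second summand as $R[\slashed{D}, T\slashed{D}R]$. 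This produces the claimed formula.

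The main obstacle is not the algebra, which is essentially bookkeeping of the Leibniz rule, but the careful domain tracking. The commutator $[\slashed{D}, T]$ is in general unbounded, so each of the identities above must first be interpreted on a suitable dense subspace (typically on vectors of the form $R\xi \in \Dom(\slashed{D}^2)$); one then verifies that the resulting bounded expressions on the right-hand side extend to all of $\Hi$ and agree there with the bounded operator $[T, R]$. Organizing the computation as an evaluation on a generic $\xi \in \Hi$, routed through $R\xi \in \Dom(\slashed{D}^2)$, makes every symbol well-defined at each step and delivers the identity as an equality in $\Bi(\Hi)$.
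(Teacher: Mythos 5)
The paper does not prove this lemma but cites \cite[Lemma 2.3]{CP1}, and your argument reproduces that proof: factorize $[T,R]=R[R^{-1},T]R=R[\slashed{D}^2,T]R$, expand $[\slashed{D}^2,T]$ by the Leibniz rule, commute $\slashed{D}$ past $R$ in the first summand, and fold the right-hand $\slashed{D}R$ of the second summand back into a commutator using $[\slashed{D},\slashed{D}R]=0$. The algebra is correct, and your observation that the identities must first be read on $R\xi\in\Dom(\slashed{D}^2)$ (so that intermediate operators such as $\slashed{D}TR$ and $T\slashed{D}^2R$ are everywhere defined, hence bounded by the closed graph theorem, and the equality then extends to all of $\Hi$) is exactly the domain bookkeeping the cited proof requires.
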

In the next lemma we write $D:=(D_1,\dots,D_n)$ and $|D|:=\sqrt{D_1^2+\cdots+D_n^2}$.
\begin{Lemma}\label{Riesmultiplier} Let $\slashed{D}$ be the Dirac operator \eqref{Diracoperator} associated with the $C^*$-dynamical system $(A,\R^n,\alpha)$ and define
$$
\slashed{F}:=\slashed{D}(\bone+\slashed{D}^2)^{-1/2}.
$$
Then for each $a\in A$, the commutator $[\slashed{F},\pi_\alpha(a)]$ belongs to $\Mn_N(B)$.
\end{Lemma}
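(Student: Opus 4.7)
The plan is to reduce first to $\alpha$-smooth elements and then express the commutator as a norm-convergent Bochner integral of elements of $\Mn_N(B)$. Since $a\mapsto[\slashed{F},\pi_\alpha(a)]$ is norm-continuous (bounded by $2\|a\|$), the $\alpha$-smooth subalgebra $\Ai$ is norm-dense in $A$, and $\Mn_N(B)$ is norm-closed in $\Mn_N(\Bi(L^2(\R^n,\GH)))$, it suffices to prove the claim for $a\in\Ai$. For such $a$, Proposition \ref{easycommutator} makes $X:=[\slashed{D},\pi_\alpha(a)] = (2\pi i)^{-1}\sum_{k}\gamma^k\pi_\alpha(\delta_k(a))$ a bounded operator.

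Writing $\slashed{F}=\slashed{D}R$ with $R:=(\bone+\slashed{D}^2)^{-1/2}$ and using the Laplace-type identity $R = \tfrac{2}{\pi}\int_0^\infty G_s\,ds$ with $G_s:=(\bone+\slashed{D}^2+s^2)^{-1}$, I would combine $[G_s,\pi_\alpha(a)] = -G_s[\slashed{D}^2,\pi_\alpha(a)]G_s$ with $[\slashed{D}^2,\pi_\alpha(a)]=\slashed{D}X+X\slashed{D}$ to obtain
$$[\slashed{F},\pi_\alpha(a)] \;=\; XR \;-\; \frac{2}{\pi}\int_0^\infty\bigl(\slashed{D}^2G_s\,X\,G_s \;+\; \slashed{D}G_s\,X\,\slashed{D}G_s\bigr)\,ds.$$
The Clifford relations give $\slashed{D}^2 = |D|^2\otimes\bone_N$, so each of $R$, $G_s$ and $\slashed{D}G_s$ is a $C_0(\R^n)$-function of the joint generator $D$; in particular they belong to $\Mn_N(\Mi(B))$. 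Using the standard fact (cf.\ \cite[Prop.\ 2.39]{Will1}) that $\pi_\alpha(b)g(D)$ and $g(D)\pi_\alpha(b)$ lie in $B$ for every $b\in A$ and $g\in C_0(\R^n)$, the term $XR$ already lies in $\Mn_N(B)$, and the same reasoning places the second integrand $\slashed{D}G_s\,X\,\slashed{D}G_s$ in $\Mn_N(B)$, since it has $C_0$-factors of $D$ on both sides of the $\pi_\alpha$-part of $X$.

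The delicate piece is the first integrand $\slashed{D}^2G_s\,X\,G_s$: the factor $\slashed{D}^2G_s$ is bounded but does \emph{not} vanish as $|\xi|\to\infty$, so naively it would only give membership in the multiplier algebra. Rewriting $\slashed{D}^2G_s = \bone - (1+s^2)G_s$ replaces it by $XG_s - (1+s^2)G_sXG_s$, and both pieces once again expose a $C_0$-function of $D$ adjacent to the $\pi_\alpha$-factors in $X$, so they lie in $\Mn_N(B)$. The spectral estimates $\|G_s\|\leq (1+s^2)^{-1}$ and $\|\slashed{D}G_s\|\leq (2\sqrt{1+s^2})^{-1}$ (the latter by AM-GM applied to $\xi\mapsto|\xi|/(1+|\xi|^2+s^2)$) then give the integrand an $O(\|X\|(1+s^2)^{-1})$ norm bound, so the Bochner integral converges absolutely; since $\Mn_N(B)$ is norm-closed, the limit lies in $\Mn_N(B)$. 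The main obstacle is precisely this bookkeeping step that prevents the naive estimate from leaking into $\Mi(B)$.
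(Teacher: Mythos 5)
Your proof is correct and follows essentially the same route as the paper's: Leibniz rule for the commutator, the Laplace-type integral representation of $(\bone+\slashed{D}^2)^{-1/2}$, and a norm-convergent Bochner integral of elements of $\Mn_N(B)$ (your explicit density reduction to $\alpha$-smooth $a$ also tidies up a point the paper leaves implicit). The one superfluous step is the rewriting of $\slashed{D}^2G_s$: since the right-hand $G_s\in C_0$ already places $XG_s$ in $\Mn_N(B)$, and $\Mn_N(B)$ is an ideal in $\Mn_N(\Mi(B))\ni\slashed{D}^2G_s$, the product $\slashed{D}^2G_s\,XG_s$ lies in $\Mn_N(B)$ with no further manipulation --- which is exactly the observation the paper makes --- so the ``delicate piece'' is not actually delicate.
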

\begin{proof} For $a\in A$, we know e.g. from Proposition \ref{easycommutator} that $[\slashed{D},\pi_\alpha(a)]$ is a bounded operator on $\Hi$, in fact a multiplier of $\Mn_N(B)$. This fact allows us to write
$$
[\slashed{F},\pi_\alpha(a)]=[\slashed{D},\pi_\alpha(a)](\bone+\slashed{D}^2)^{-1/2}+\slashed{D}[(\bone+\slashed{D}^2)^{-1/2},\pi_\alpha(a)].
$$
For every $\varphi\in C_0(\R)$ and $x\in A^\sim$, the operator $\pi_\alpha(x)\varphi(\slashed{D})$ is in $\Mn_N(B)$, as follows from the definition of the crossed product and the expression of $\slashed{D}$ in terms of the generators $D_1,\dots,D_n$. Therefore, the term $[\slashed{D},\pi_\alpha(a)](\bone+\slashed{D}^2)^{-1/2}$ is in $\Mn_N(B)$. It remains to show that we also have
$$
\slashed{D}[(\bone+\slashed{D}^2)^{-1/2},\pi_\alpha(a)]\in\Mn_N(B).
$$
For that, we use \cite[Remark A.3]{CP1} to write
$$
(\bone+\slashed{D}^2)^{-1/2}=\frac{1}{\pi}\int^\infty_0(\bone+\lambda+\slashed{D}^2)^{-1}\lambda^{-1/2}\, d\lambda,
$$
where the right-hand side converges in the norm on $\Bi(\Hi)$. By Lemma \ref{BJwereright} we then have
\begin{align*}
&\slashed{D}[(\bone+\slashed{D}^2)^{-1/2},\pi_\alpha(a)]
\\&=\frac{1}{\pi}\slashed{D}\int^\infty_0(\bone+\lambda+\slashed{D}^2)^{-1}\big([\slashed{D},\pi_\alpha(a)]\slashed{D}+\slashed{D}[\slashed{D},\pi_\alpha(a)]\big)(\bone+\lambda+\slashed{D}^2)^{-1}\lambda^{-1/2}\, d\lambda.
\end{align*}
Again we have convergence in the operator norm, so we can actually move the prefactor $\slashed{D}$ under the integral sign to obtain
\begin{align*}
\slashed{D}[(\bone+\slashed{D}^2)^{-1/2},\pi_\alpha(a)]
&=\frac{1}{\pi}\int^\infty_0\slashed{D}(\bone+\lambda+\slashed{D}^2)^{-1}[\slashed{D},\pi_\alpha(a)]\slashed{D}(\bone+\lambda+\slashed{D}^2)^{-1}\lambda^{-1/2}\, d\lambda
\\&+\frac{1}{\pi}\int^\infty_0\slashed{D}^2(\bone+\lambda+\slashed{D}^2)^{-1}[\slashed{D},\pi_\alpha(a)](\bone+\lambda+\slashed{D}^2)^{-1}\lambda^{-1/2}\, d\lambda,
\end{align*}
The whole integrand is in $\Mn_N(B)$ because, for instance, the operator $\slashed{D}^2(\bone+\lambda+\slashed{D}^2)^{-1}$ is bounded with norm $\leq 1$ and a multiplier of $\Mn_N(B)$. Moreover, the estimates \cite[Remark 5]{CGPRS}
$$
\|(\bone+\lambda+\slashed{D}^2)^{-1}\|\leq \frac{1}{1+\lambda},\qquad \|\slashed{D}(\bone+\lambda+\slashed{D}^2)^{-1}\|\leq \frac{1}{2\sqrt{1+\lambda}},
$$
which follow from functional calculus, show that the integral is norm convergent. That completes the proof.
\end{proof}

\begin{Remark}\label{evenremark}
As mentioned, for even $n$ we can always find a grading operator $\Gamma$ on $\Hi$ such that $\Gamma\pi_\alpha(a)=\pi_\alpha(a)\Gamma$ for all $a\in A$ and $\Gamma\slashed{D}=-\slashed{D}\Gamma$. In the example $n=2$ we can take $\Gamma$ to be $\diag(1,-1)$. We write
$$
\Hi=\Hi_+\oplus\Hi_-
$$
for even $n$, with $\Hi_{\pm}$ the $\pm 1$-eigenspace of the grading operator $\Gamma$. Under the decomposition $\Hi=\Hi_+\oplus\Hi_-$, the algebra $\Mn_N(B)$ splits as $\Mn_N(B)=\Mn_N(B)_+\oplus \Mn_N(B)_-$, where $\Mn_N(B)_+$ is the part of $\Mn_N(B)$ commuting with the grading operator $\Gamma=\diag(\bone,-\bone)$ and $\Mn_N(B)_-$ is the part anti-commuting with $\Gamma$. In turn, this induces an even grading $\Mn_N(\boldsymbol B)=\Mn_N(\boldsymbol B)_+\oplus\Mn_N(\boldsymbol B)_-$ of the Hilbert $B$-module $\Mn_N(\boldsymbol B)$. 
\end{Remark}

We let $\pi_B:A\to\Mn_N(\C)\otimes\Mi(B)$ be the representation of $A$ which takes $a\in A$ to the operator of left multiplication by the multiplier $1_N\otimes\pi_\alpha(a)$ of $\Mn_N(\C)\otimes B$. The operators $\pi_B(a)$ are even for the grading of the Hilbert $B$-module $\Mn_N(B)$, whereas $\gamma^1,\dots,\gamma^n$ are odd. These observations lead to the following result.
\begin{prop}\label{crucialcorrollary} 
The triple $(\pi_B,\Mn_N(B),\slashed{F})$ is a Kasparov $A$-$B$-module and defines a class 
$$
\mathbf{t}_\alpha\in KK^\bullet(A,B),
$$
where $\bullet\in\{0,1\}=\{\rm even,\rm odd\}$ is the parity of $n$.

Let $\slashed{\boldsymbol{R}}:=\slashed{\boldsymbol{D}}|\slashed{\boldsymbol{D}}|^{-1}$ denote the phase of the massive Dirac operator $\slashed{\boldsymbol{D}}$. Then $(\boldsymbol\pi_B, \Mn_N(\boldsymbol B),\slashed{\boldsymbol{R}})$ is an even Kasparov $A$-$(\C_n\otimes B)$-module and defines the same class $\mathbf{t}_\alpha$ in $KK^\bullet(A,B)$.
\end{prop}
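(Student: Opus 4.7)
The plan is to verify the Kasparov-module axioms for each triple separately, then construct an explicit operator homotopy showing that they represent the same class.

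For $(\pi_B,\Mn_N(B),\slashed{F})$: the operator $\slashed{F}=\slashed{D}(\bone+\slashed{D}^2)^{-1/2}$ is bounded self-adjoint with $\|\slashed{F}\|\leq 1$ by functional calculus, and $\slashed{F}^2-\bone=-(\bone+\slashed{D}^2)^{-1}$. Since $(\bone+x^2)^{-1}\in C_0(\R)$, the observation used in the proof of Lemma \ref{Riesmultiplier} that $\varphi(\slashed{D})\in\Mn_N(B)$ for every $\varphi\in C_0(\R)$ yields $\pi_B(a)(\slashed{F}^2-\bone)\in\Mn_N(B)$, while Lemma \ref{Riesmultiplier} itself delivers $[\slashed{F},\pi_B(a)]\in\Mn_N(B)$. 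For even $n$ the grading operator $\Gamma$ of Remark \ref{evenremark} makes $\pi_B(a)$ even and $\slashed{F}$ odd; for odd $n$ one keeps the module ungraded and obtains an odd Kasparov class. In either parity we obtain a class $\mathbf{t}_\alpha\in KK^\bullet(A,B)$.

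For $(\boldsymbol\pi_B,\Mn_N(\boldsymbol B),\slashed{\boldsymbol R})$: the two $2\times 2$ matrices in \eqref{massiveDirackasp} anticommute, so expanding the square gives
\begin{equation*}
\slashed{\boldsymbol D}^2=\slashed{D}^2\otimes\bone+m^2\otimes\bone\geq m^2\bone.
\end{equation*}
Thus $\slashed{\boldsymbol D}$ is invertible, $\slashed{\boldsymbol R}$ is a bounded self-adjoint operator satisfying $\slashed{\boldsymbol R}^2=\bone$, and $\boldsymbol\pi_B(a)(\slashed{\boldsymbol R}^2-\bone)=0$ trivially. For the commutator I would mimic the argument of Lemma \ref{Riesmultiplier} via the integral representation
\begin{equation*}
|\slashed{\boldsymbol D}|^{-1}=\frac{1}{\pi}\int^\infty_0(\lambda\bone+\slashed{\boldsymbol D}^2)^{-1}\lambda^{-1/2}\,d\lambda,
\end{equation*}
which converges in operator norm thanks to the mass gap (the estimate $\|(\lambda\bone+\slashed{\boldsymbol D}^2)^{-1}\|\leq(\lambda+m^2)^{-1}$ makes the integral dominated by $\pi/m$). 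Combining this with Lemma \ref{BJwereright} applied to $[(\lambda+\slashed{\boldsymbol D}^2)^{-1},\boldsymbol\pi_B(a)]$ expresses $[\slashed{\boldsymbol R},\boldsymbol\pi_B(a)]$ as a norm-convergent integral whose integrand lies manifestly in $\Mn_N(\boldsymbol B)$, so the commutator belongs to $\Mn_N(\boldsymbol B)$. The even grading required of the Kasparov $A$-$(\C_n\otimes B)$-module is inherited from the combined $\C_n$-Clifford and $\C^2$ structures, under which $\boldsymbol\pi_B(a)$ is even while $\slashed{\boldsymbol R}$ is odd.

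To identify the two classes, I would use a two-stage operator homotopy. In the first stage, $\slashed{\boldsymbol F}_t:=\slashed{\boldsymbol D}(t\bone+\slashed{\boldsymbol D}^2)^{-1/2}$ for $t\in[0,1]$ is norm-continuous thanks to the mass gap, satisfies the Kasparov conditions at each $t$ by the same arguments as above, and interpolates between $\slashed{\boldsymbol F}_0=\slashed{\boldsymbol R}$ and $\slashed{\boldsymbol F}_1=\slashed{\boldsymbol D}(\bone+\slashed{\boldsymbol D}^2)^{-1/2}$. In the second stage I deform the mass: for $s\in[0,1]$ replace $m$ by $sm$ in \eqref{massiveDirackasp} to obtain $\slashed{\boldsymbol D}_s$, and consider the bounded transform $G_s:=\slashed{\boldsymbol D}_s(\bone+\slashed{\boldsymbol D}_s^2)^{-1/2}$. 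This is norm-continuous in $s$ since $\slashed{\boldsymbol D}_s$ differs from $\slashed{\boldsymbol D}_0=\diag(\slashed{D},-\slashed{D})$ by a bounded perturbation. At $s=1$ we recover $\slashed{\boldsymbol F}_1$; at $s=0$, the bounded transform becomes $\diag(\slashed{F},-\slashed{F})$ on $\Mn_N(B)\oplus\Mn_N(B)$. Crucially, for $a\in A$ the representation acts as $\boldsymbol\pi_B(a)=\diag(\pi_\alpha(a),0)$, so the $s=0$ Kasparov module splits as the direct sum of $(\pi_B,\Mn_N(B),\slashed{F})$ and the degenerate module $(0,\Mn_N(B),-\slashed{F})$, the latter having zero $KK$-class. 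Homotopy invariance then yields the claimed equality.

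The main obstacle is verifying that the compactness conditions persist uniformly along the second homotopy, where the mass gap degenerates at $s=0$. At each fixed $s\in[0,1]$ the conditions hold by the arguments above, but one must confirm continuity of $G_s^2-\bone=-(\bone+\slashed{\boldsymbol D}_s^2)^{-1}$ and of $[G_s,\boldsymbol\pi_B(a)]$ in the norm of $\Mn_N(\boldsymbol B)$ uniformly as $s\to 0$. This is obtained by dominated convergence applied to the resolvent integrals, using the $s$-independent bounds $\|(\bone+\lambda+\slashed{\boldsymbol D}_s^2)^{-1}\|\leq(1+\lambda)^{-1}$ and $\|\slashed{\boldsymbol D}_s(\bone+\lambda+\slashed{\boldsymbol D}_s^2)^{-1}\|\leq(2\sqrt{1+\lambda})^{-1}$ already invoked in the proof of Lemma \ref{Riesmultiplier}.
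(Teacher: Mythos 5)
Your verification that each triple is a Kasparov module follows the same route as the paper: Lemma \ref{Riesmultiplier} (and the observation that $\pi_\alpha(a)\varphi(\slashed{D})\in\Mn_N(B)$ for $\varphi\in C_0(\R)$) handles $(\pi_B,\Mn_N(B),\slashed{F})$, and the mass gap simplifies the analogous argument for $\slashed{\boldsymbol R}$. The one place you genuinely diverge is in identifying the two $KK$-classes: the paper invokes \cite[Lemma 2.10]{CGRS1} as a black box, whereas you construct an explicit two-stage operator homotopy (first $\slashed{\boldsymbol R}\rightsquigarrow\slashed{\boldsymbol D}(\bone+\slashed{\boldsymbol D}^2)^{-1/2}$ via $t\mapsto\slashed{\boldsymbol D}(t\bone+\slashed{\boldsymbol D}^2)^{-1/2}$, then shrinking the mass $m\to sm$) and observe that at $s=0$ the module splits off a degenerate summand $(0,\Mn_N(B),-\slashed{F})$. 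This is correct: the mass gap gives the Lipschitz estimate $\|\slashed{\boldsymbol F}_t-\slashed{\boldsymbol F}_{t'}\|\le|t-t'|/(2m^2)$ in the first stage, and in the second stage the relevant operator functions $x\mapsto x(1+s^2m^2+x^2)^{-1/2}$ and $x\mapsto(1+s^2m^2+x^2)^{-1}$ are uniformly Lipschitz in $s^2$, so norm-continuity holds by the spectral theorem without needing any general operator-Lipschitz machinery — a point worth making explicit, since your phrase ``differs by a bounded perturbation'' would not by itself give norm continuity of the bounded transform in general; here it does because $\slashed{\boldsymbol D}_s^2=(\slashed{D}^2+s^2m^2)\otimes\bone_{\C^2}$ commutes with everything in sight. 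Your argument effectively reproves the cited lemma in this concrete situation, buying self-containedness at the cost of the extra estimates, while the paper's citation keeps the proposition short. Both are valid.
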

\begin{proof} 
We have seen in Lemma \ref{Riesmultiplier} that $[\slashed{F},\pi_\alpha(a)]$ is in $\Mn_N(B)$ for all $a\in A$. So for the first statement it remains only to show that $\pi_B(A)(\slashed{F}^2-\bone)$ is contained in $\Mn_N(B)$. For that, let $a\in A$ and write
$$
\pi_B(a)(\slashed{F}^2-\bone)=\pi_B(a)(\bone+\slashed{D}^2)^{-1}=\pi_B(a)\varphi(\slashed{D})
$$
where $\varphi:\R\to \C$ vanishes at infinity. Thus we have $\pi_B(a)(\slashed{F}^2-\bone)\in\Mn_N(B)$.

The same proof as that of Lemma \ref{Riesmultiplier} shows that $[\slashed{\boldsymbol R},\boldsymbol\pi_B(a)]$ belongs to $\Mn_N(\boldsymbol B)$ for all $a\in A$. That $\mathbf{t}_\alpha$ is also represented by $(\boldsymbol\pi_B,\Mn_N(\boldsymbol B),\slashed{\boldsymbol{R}})$ follows from \cite[Lemma 2.10]{CGRS1}.
\end{proof}


\subsection{The Toeplitz extension}\label{Toep}
We are now in position to deduce the Toeplitz extension.

\begin{proof}[Proof of Proposition \ref{exactsequence}]
Regard $\Mn_N(\boldsymbol B)$ as a subalgebra of $B\otimes\Ki$, where $\Ki$ is the $C^*$-algebra of compact operators on some infinite-dimensional Hilbert space. From Proposition \ref{crucialcorrollary} we know that $\slashed{\boldsymbol R}$ and $\slashed{\boldsymbol P}$ are multipliers of $B\otimes\Ki$. So we have a projection $\slashed{\boldsymbol P}\in\Mi(B\otimes\Ki)$ with $[\slashed{\boldsymbol P},\boldsymbol{\pi}_\alpha(A)]\subset B\otimes\Ki$. We know that this characterizes an invertible extension. The Busby invariant of this extension (cf. \cite[\S15]{Bla}, \cite{Busby1}, \cite[\S3.2]{Ols}) is given by $\boldsymbol\gamma_\alpha(a):=q(\slashed{\boldsymbol P}\boldsymbol{\pi}_\alpha(a)\slashed{\boldsymbol P})$, where $q:\Mi(B\otimes\Ki)\to\Qi(B\otimes\Ki)$ is the Calkin map. The proof is complete by noticing that the pullback $C^*$-algebra associated to $\boldsymbol\gamma_\alpha$ (cf. \cite[Prop. 3.2.11]{Ols}),
$$
\boldsymbol\Ti\cong\{(T,a)\in\Mi(B\otimes\Ki)\oplus A|\ q(T)=\boldsymbol\gamma_\alpha(a)\},
$$
is indeed the Toeplitz $C^*$-algebra. 
\end{proof}
We refer to the exact sequence in Proposition \ref{exactsequence} as the \textbf{Toeplitz extension} of $(A,\R^n,\alpha)$. It determines an element of $\Ext(A,B)^{-1}$, the group of semisplit extensions of $A$ by $B\otimes\Ki$. Recall \cite[Lemma 6.2]{Kasp1}, which says that
$$
KK^1(A,B)\cong \Ext(A,B)^{-1},
$$ 
where $\Ext(A,B)^{-1}$ is the group of invertible elements in the semigroup $\Ext(A,B)$ of extensions of $A$ by $B$. 
As we saw in the proof of Proposition \ref{exactsequence}, the Busby invariant of the Toeplitz extension of $(A,\R^n,\alpha)$ is given by 
$$
\boldsymbol\gamma_\alpha(a):=q(\slashed{\boldsymbol P}\boldsymbol{\pi}_\alpha(a)\slashed{\boldsymbol P}).
$$
For odd $n$, it follows that the class of the Toeplitz extension identitifes with the element of $KK^1(A,B)$ denoted by $\mathbf{t}_\alpha$ in Proposition \ref{crucialcorrollary}. We shall see in the next section that $\mathbf{t}_\alpha$ is in fact the Thom element for $(A,\R^n,\alpha)$, both for even and odd $n$.
\begin{Remark}
By Proposition \ref{crucialcorrollary}, the ideal of $\boldsymbol\Ti$ generated by the elements
$$
\slashed{\boldsymbol P}\boldsymbol{\pi}_\alpha(a)\boldsymbol{\pi}_\alpha(b)\slashed{\boldsymbol P}-\slashed{\boldsymbol P}\boldsymbol{\pi}_\alpha(a)\slashed{\boldsymbol P}\boldsymbol{\pi}_\alpha(b)\slashed{\boldsymbol P},\qquad a,b\in A
$$
coincides with $\Mn_N(\boldsymbol B)$, which is another way of seeing that $\Mn_N(\boldsymbol B)$ is an ideal in $\boldsymbol\Ti$. 
\end{Remark}


\begin{Lemma}\label{Fredlemma} The operator $T_a:=\slashed{\boldsymbol P}\boldsymbol{\pi}_\alpha(a)\slashed{\boldsymbol P}$ is Fredholm as an operator on $\Mn_N(\boldsymbol B)$ iff $a$ is invertible in $A^\sim$. 
\end{Lemma}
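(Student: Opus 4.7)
The plan is to read the statement off the semisplit exact sequence of Proposition~\ref{exactsequence} combined with elementary spectral permanence for $C^*$-subalgebras. The idea is that $T_a$ is a lift of $a$ under the quotient map $\boldsymbol{\Ti}\to A^\sim$ (the quotient of Proposition~\ref{exactsequence} is unital with unit $[\slashed{\boldsymbol P}]=[T_{\bone}]$ corresponding to $\bone_{A^\sim}$), so Fredholmness of $T_a$ relative to the ideal $\Mn_N(\boldsymbol B)$ becomes invertibility of $a$ in $A^\sim$.

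I would first make the map $\sigma:\boldsymbol{\Ti}/\Mn_N(\boldsymbol B)\xrightarrow{\cong}A^\sim$ explicit and verify $\sigma([T_a])=a$ for every $a\in A^\sim$. Writing $a=a_0+\lambda\bone$ with $a_0\in A$ and $\lambda\in\C$, we have $T_a=T_{a_0}+\lambda\slashed{\boldsymbol P}$, so it suffices to combine the Busby invariant identification $[T_{a_0}]\leftrightarrow a_0$ established in the proof of Proposition~\ref{exactsequence} with $[\slashed{\boldsymbol P}]\leftrightarrow\bone_{A^\sim}$.

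For the ``if'' direction, assume $a\in A^\sim$ has inverse $b\in A^\sim$. Using $[\slashed{\boldsymbol P},\boldsymbol\pi_\alpha(c)]\in\Mn_N(\boldsymbol B)$ (valid for every $c\in A^\sim$, since the scalar summand of $c$ contributes nothing to the commutator) together with the fact that $\boldsymbol\pi_\alpha(a)$ and $\slashed{\boldsymbol P}$ are multipliers of $\Mn_N(\boldsymbol B)$, I get
$$T_aT_b=\slashed{\boldsymbol P}\boldsymbol\pi_\alpha(a)\slashed{\boldsymbol P}\boldsymbol\pi_\alpha(b)\slashed{\boldsymbol P}\equiv\slashed{\boldsymbol P}\boldsymbol\pi_\alpha(ab)\slashed{\boldsymbol P}=\slashed{\boldsymbol P}\pmod{\Mn_N(\boldsymbol B)},$$
and symmetrically $T_bT_a\equiv\slashed{\boldsymbol P}$ modulo $\Mn_N(\boldsymbol B)$. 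Hence $T_b\in\boldsymbol{\Ti}$ is a parametrix, making $T_a$ Fredholm relative to $\Mn_N(\boldsymbol B)$.

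For the converse, suppose $T_a$ is Fredholm. Passing to the Hilbert $\Mn_N(\boldsymbol B)$-module $E:=\slashed{\boldsymbol P}\cdot\Mn_N(\boldsymbol B)$, Fredholmness relative to $\Mn_N(\boldsymbol B)$ is the statement that the class of $T_a$ in the Calkin algebra $\End^*(E)/\Ki(E)$ is invertible. The quotient $\boldsymbol{\Ti}/\Mn_N(\boldsymbol B)\cong A^\sim$ embeds unitally into $\End^*(E)/\Ki(E)$ with common unit $[\slashed{\boldsymbol P}]$, so spectral permanence for unital $C^*$-subalgebras forces the class $[T_a]$, which equals $a$ under $\sigma$, to be invertible already in $A^\sim$. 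The main obstacle is precisely this last step: one must carefully identify ``Fredholm relative to $\Mn_N(\boldsymbol B)$'' with invertibility in $\End^*(E)/\Ki(E)$ and check that $\Mn_N(\boldsymbol B)$ accounts for all the compact operators arising inside $\boldsymbol{\Ti}$, after which spectral permanence finishes the proof cleanly.
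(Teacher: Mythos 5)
Your ``if'' direction coincides with the paper's: both exploit $[\slashed{\boldsymbol P},\boldsymbol\pi_\alpha(c)]\in\Mn_N(\boldsymbol B)$ for every $c\in A^\sim$ (valid because the commutator kills the scalar part, and for $c\in A$ it is established via $[\slashed{\boldsymbol R},\boldsymbol\pi_B(a)]\in\Mn_N(\boldsymbol B)$ from Proposition \ref{crucialcorrollary}), and conclude that $T_{a^{-1}}$ is a parametrix modulo $\Mn_N(\boldsymbol B)$. One small improvement in your write-up over the paper is that you do not needlessly restrict to $\Ai^\sim$; the commutator compactness holds for all of $A^\sim$ and this is what the Lemma actually needs.

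For the ``only if'' direction the routes diverge. The paper simply reads the claim off the exact sequence: the quotient map $\boldsymbol\Ti\to\boldsymbol\Ti/\Mn_N(\boldsymbol B)$ sends $T_a$ to $a$, so if $T_a$ has a parametrix in $\boldsymbol\Ti$ modulo the ideal then $a$ is invertible in the quotient. Your proposal instead identifies ``Fredholm on $\Mn_N(\boldsymbol B)$'' with invertibility in the module Calkin algebra $\End^*(E)/\Ki(E)$ for $E=\slashed{\boldsymbol P}\Mn_N(\boldsymbol B)$ and invokes spectral permanence to descend from there to $A^\sim$. This is a genuinely different argument. What it buys is robustness: you need not assume a priori that a parametrix for $T_a$ lies in $\boldsymbol\Ti$, which is exactly the tacit assumption hidden in the paper's one-liner. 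What it costs, as you yourself flag, is that you must verify that the compacts of the module $E$ inside $\boldsymbol\Ti$ are precisely (the corner of) $\Mn_N(\boldsymbol B)$ and that the unital inclusion of $\boldsymbol\Ti/\Mn_N(\boldsymbol B)\cong A^\sim$ into $\End^*(E)/\Ki(E)$ really has $[\slashed{\boldsymbol P}]$ as its common unit. (You should also be mindful that Proposition \ref{exactsequence} is phrased with quotient $A$ rather than $A^\sim$; reconciling this with the presence of $T_{\bone}=\slashed{\boldsymbol P}\in\boldsymbol\Ti$ is a small housekeeping point affecting your identification $\sigma([\slashed{\boldsymbol P}])=\bone_{A^\sim}$, but it does not undermine the argument.) Both approaches are correct; yours is more elaborate but fills in a subtlety the paper elides.
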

\begin{proof} From Proposition \ref{exactsequence} it follows that if $T_a$ is invertible modulo $\Mn_N(\boldsymbol B)$ then $a$ is invertible. Conversely, if $u\in\Ai^\sim$ is invertible then, since $[\slashed{\boldsymbol P},\boldsymbol{\pi}_\alpha(u)]\in\Mn_N(\boldsymbol B)$ by Lemma \ref{Riesmultiplier}, we get
$$
(\slashed{\boldsymbol P}\boldsymbol{\pi}_\alpha(u)\slashed{\boldsymbol P})(\slashed{\boldsymbol P}\boldsymbol{\pi}_\alpha(u^{-1})\slashed{\boldsymbol P})\equiv\slashed{\boldsymbol P}\text{ mod }\Mn_N(\boldsymbol B),
$$
and similarly for $u\leftrightarrow u^{-1}$. Now $\slashed{\boldsymbol P}$ is the identity in $\slashed{\boldsymbol P}\Mn_N(\boldsymbol\Ni)\slashed{\boldsymbol P}$, where $\boldsymbol\Ni:=\boldsymbol B''$. 
\end{proof}
We shall later define a $K_0(B)$-valued index for $\Mn_N(\boldsymbol B)$-relative Fredholm operators in $\slashed{\boldsymbol P}\pi_\alpha(A^\sim)\slashed{\boldsymbol P}$.

\subsection{The Thom class}\label{Extsection}
In this section we show that $(\pi_\alpha,\Mn_N(B),\slashed{F})$ is a representative of the Thom class for the $C^*$-dynamical system $(A,\R^n,\alpha)$.  

First we need to recall some fundamental facts about crossed products. Let $B:=A\rtimes_\alpha\R^n$ be the crossed product. There is an action $\hat{\alpha}:\hat{\R}^n\to\Aut(B)$ on $B$ of the dual group $\hat{\R}^n\cong\R^n$ of $\R^n$, called the \textbf{dual action} \cite[Def. X.2.4]{Ta2}, characterized by ($s\in\hat{\R}^n$)
$$
\hat{\alpha}_s(\pi_\alpha(a)):=\pi_\alpha(a),\qquad\forall\, a\in A,
$$
$$
\hat{\alpha}_s(\lambda_t):=e^{-2\pi is\cdot t}\lambda_t,\qquad\forall\, t\in\R^n.
$$
The fixed-point subalgebra of $B$ under the action $\hat{\alpha}$ is just $\pi_\alpha(A)$. A fundamental fact is that iterating the crossed-product construction using the dual action gives back $A$ (up to stable isomorphism).
\begin{thm}[Takesaki-Takai duality {\cite[Thm. X.2.3]{Ta2}, \cite{Takai1}}]\label{thmTakai}
Let $(A,\R^n,\alpha)$ be a $C^*$-dynamical system. Then the crossed product of $A\rtimes_\alpha\R^n$ with $\hat{\R}^n$ by the dual action $\hat{\alpha}$ is stably isomorphic to the original algebra $A$:
$$
(A\rtimes_\alpha\R^n)\rtimes_{\hat{\alpha}}\hat{\R}^n\cong A\otimes\Ki.
$$
On the level of von Neumann algebras $\Mi=A''$ and $\Ni=(A\rtimes_\alpha\R^n)''$, the duality reads
$$
\Ni\rtimes_{\hat{\alpha}}\hat{\R}^n\cong \Mi\otimes\Bi(L^2(\R^n)).
$$
The isomorphism can be chosen so that the double dual action $\hat{\hat{\alpha}}$ is intertwined with the action $\alpha\otimes\Ad(\lambda)$ on $A\otimes\Ki$, where $\Ad(\lambda_t)(T):=\lambda_{-t}T\lambda_t$ for $T\in\Ki(L^2(\R^n))$.
\end{thm}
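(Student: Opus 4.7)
The plan is to construct the isomorphism explicitly at the Hilbert-space level and then read off the double-dual intertwining as a bonus. Since $B:=A\rtimes_\alpha\R^n$ is already represented faithfully on $L^2(\R^n,\GH)$ via $\pi_\alpha$ and the unitaries $\lambda_t := e^{-2\pi it\cdot D}$, the iterated crossed product $B\rtimes_{\hat\alpha}\hat\R^n$ admits a canonical faithful representation on $L^2(\hat\R^n,L^2(\R^n,\GH))\cong L^2(\R^n)\otimes L^2(\R^n)\otimes\GH$, generated by $\pi_\alpha(a)\otimes\bone$ (for $a\in A$), the operators $\lambda_t\otimes M_{e^{-2\pi it\cdot(\cdot)}}$ (the image of $\lambda_t$, twisted by the character implementing $\hat\alpha$), and the translation unitaries $\bone\otimes\hat\lambda_r$ on the $\hat\R^n$-factor. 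Here $M_\phi$ denotes multiplication by $\phi$.

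First, I would apply the Fourier transform $\Fi$ on the $L^2(\hat\R^n)$ factor: this converts the translations $\hat\lambda_r$ into multiplication operators $M_{e^{2\pi ir\cdot(\cdot)}}$ and, dually, converts the multiplication $M_{e^{-2\pi it\cdot(\cdot)}}$ into the translation $T_t$. Thus after $\bone\otimes\Fi$ the generators become $\pi_\alpha(a)\otimes\bone$, $\lambda_t\otimes T_t$, and $\bone\otimes M_{e^{2\pi ir\cdot(\cdot)}}$. The second family entangles the two factors, but can be disentangled by the unitary $W$ on $L^2(\R^n)\otimes L^2(\R^n)$ defined by $(W\eta)(t,s):=\eta(t,t+s)$ (after extending by $\bone_\GH$ on the third factor). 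A direct computation shows that conjugation by $W$ leaves $\pi_\alpha(a)\otimes\bone$ and $\bone\otimes M_{e^{2\pi ir\cdot(\cdot)}}$ unchanged but sends $\lambda_t\otimes T_t$ to $\lambda_t\otimes\bone$.

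Consequently, after $(\bone\otimes\Fi)\circ W$ the double crossed product is generated by $\pi_\alpha(a)\otimes\bone$, $\lambda_t\otimes\bone$, and $\bone\otimes M_{e^{2\pi ir\cdot(\cdot)}}$. The first two generate $\pi_\alpha(A)\rtimes_\alpha\R^n = B$ on the first $L^2(\R^n)\otimes\GH$; the third generates, together with the hidden translations coming from $\lambda_t\otimes\bone$ (which must be reabsorbed into the first factor), a copy of the compact operators $\Ki(L^2(\R^n))$ on the second factor via the Stone--von Neumann theorem — once one commutes everything through to isolate $\pi_\alpha(A)$ acting only on the first tensor slot. A slightly cleaner way is to observe that the generators involving $D$ and the multiplications $M_{e^{2\pi ir\cdot(\cdot)}}$ together satisfy the Weyl (Heisenberg) commutation relations irreducibly on $L^2(\R^n)$, so they generate $\Ki(L^2(\R^n))$; whereas $\pi_\alpha(A)$ ends up commuting with this copy of $\Ki(L^2(\R^n))$, acting on the original $\GH$-factor. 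The norm closure is therefore $A\otimes\Ki(L^2(\R^n))$.

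Finally, the intertwining statement is obtained by tracking the double dual action through $(\bone\otimes\Fi)\circ W$. On $B$, the action $\hat{\hat\alpha}_t$ fixes $\pi_\alpha(a)$ and sends $\hat\lambda_r \mapsto e^{-2\pi it\cdot r}\hat\lambda_r$; under Fourier transform this is precisely translation by $t$ on the $L^2(\R^n)$-factor, i.e.\ $\Ad(\lambda_t)$ on $\Ki(L^2(\R^n))$. On $\pi_\alpha(a)$ one simply recovers $\alpha_t$. Passing to weak closures yields the von Neumann algebraic version, since the unitary $(\bone\otimes\Fi)\circ W$ implements the isomorphism on the bicommutants. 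The main obstacle is the bookkeeping of the Weyl commutation relations in the disentangling step — one must verify carefully that after conjugation by $W$ the resulting $\R^n\times\hat\R^n$-representation is irreducible on the second factor and commutes exactly with $\pi_\alpha(A)$, so that the tensor decomposition $A\otimes\Ki$ really holds and not merely up to an ambient von Neumann algebra.
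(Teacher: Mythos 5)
The paper does not supply its own proof of this theorem: it is stated as a classical result and attributed directly to Takesaki and Takai (the citations \cite[Thm.\ X.2.3]{Ta2} and \cite{Takai1} immediately follow the theorem header). So there is nothing in the paper to compare your argument against, and you should be aware that what is expected here is a reference, not a rederivation. That said, the route you outline is essentially the textbook one (concrete representation of the iterated crossed product, a Fourier transform on the outer $L^2(\hat{\R}^n)$ factor, and a Fell-absorption-type shear unitary $W$ followed by Stone--von Neumann), so your proposal is in the right spirit.

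However, there is a concrete computational error in the disentangling step, and it matters. With $(W\eta)(t,s)=\eta(t,t+s)$, conjugation indeed fixes $\pi_\alpha(a)\otimes\bone$ and sends $\lambda_t\otimes T_t$ to $\lambda_t\otimes\bone$, but it does \emph{not} fix $\bone\otimes M_{e^{2\pi i r\cdot(\cdot)}}$. A direct check gives
$W(\bone\otimes M_\phi)W^{*}=M_{\phi(t+s)}$, and for $\phi(s)=e^{2\pi i r\cdot s}$ this is $M_{e^{2\pi i r\cdot t}}\otimes M_{e^{2\pi i r\cdot s}}=V_r\otimes M_{e^{2\pi i r\cdot s}}$, where $V_r$ is the unitary implementing the dual action $\hat{\alpha}_r$ on the first $L^2(\R^n)\otimes\GH$ slot. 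So the two $L^2(\R^n)$ slots are still coupled after $W$, contrary to your claim that the generators become $\pi_\alpha(a)\otimes\bone$, $\lambda_t\otimes\bone$, $\bone\otimes M_{e^{2\pi i r\cdot(\cdot)}}$. Your subsequent informal appeal to Stone--von Neumann and to ``reabsorbing the hidden translations'' acknowledges that something is missing, but the argument as written does not recover the correct picture: one needs a further unitary conjugation (or one must pair $\lambda_t$ with $V_r$ on a \emph{single} $L^2(\R^n)$ factor) before Stone--von Neumann can be invoked to isolate $\Ki(L^2(\R^n))$. The related claim that ``$\pi_\alpha(A)$ ends up ... acting on the original $\GH$-factor'' is also imprecise: $\pi_\alpha(A)$ lives on $L^2(\R^n,\GH)$, not $\GH$ alone, and one identifies it with $A$ via the faithfulness of $\pi_\alpha$, not by literally landing in $\Bi(\GH)$. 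These are exactly the bookkeeping pitfalls you flag at the end, and they are not optional; until the second disentangling is carried out and the commutant computed, the claimed tensor decomposition $A\otimes\Ki$ is not established. Since the theorem is standard, the cleanest fix is simply to cite \cite[Thm.\ X.2.3]{Ta2} or \cite[Thm.\ 7.1]{Will1}, as the paper does.
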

In view of Theorem \ref{thmTakai}, we refer to $(B,\hat{\R}^n,\hat{\alpha})$ as the \textbf{dual dynamical system} of $(A,\R^n,\alpha)$.

We want the construction of the Kasparov $A$-$B$-module $(\pi_\alpha,\Mn_N(B),\slashed{F})$ from the data $(A,\R^n,\alpha)$ to  be ``compatible" with Takesaki-Takai duality. Otherwise the element $\mathbf{t}_{\hat{\alpha}}$ associated with the dual dynamical system $(B,\hat{\R}^n,\hat{\alpha})$ will not be an inverse for $\mathbf{t}_\alpha$. To have this compatibility we need to be a little bit more cunning and distinguish between $\R^n$-actions and actions by the dual group $\hat{\R}^n$. Thus, we make the following convention.
\begin{dfn}\label{defofdualthom}
As before, let $\C_n$ be the complex Clifford algebra associated with the vector space $\R^n$ equipped with the standard Euclidean inner product $\bra\cdot|\cdot\ket$. We let $\C_{-n}$ be complex Clifford algebra associated with $(\R^n,-\bra\cdot|\cdot\ket)$, i.e. with $\R^n$ equipped with the negative inner product. Let $\hat{\gamma}^1,\dots,\hat{\gamma}^n$ be the skew-Hermitian generators of the irreducible representation of $\C_{-n}$ on $\C^N$. 

Let $(B,\hat{\R}^n,\hat{\alpha})$ be a $C^*$-dynamical system, i.e. a $C^*$-algebra $B$ equipped with a strongly continuous action $\hat{\alpha}:\hat{\R}^n\to\Aut(B)$. Let $X_1,\dots,X_n$ be the generators of the unitary group implementing $\hat{\alpha}$ in the representation $\pi_{\hat{\alpha}}$. We define the Kasparov $B$-$(B\rtimes_{\hat{\alpha}}\hat{\R}^n)$-module 
$$
(\pi_{\hat{\alpha}},\Mn_N(B\rtimes_{\hat{\alpha}}\hat{\R}^n),\slashed{X}(\bone+\slashed{X}^2)^{-1/2})
$$
just as the module $(\pi_\alpha,\Mn_N(A\rtimes_\alpha\R^n),\slashed{D}(\bone+\slashed{D}^2)^{-1/2})$ was defined in the case of $\R^n$-actions (\S\ref{Diracsection}), but now with
\begin{equation}\label{dualactionDirac}
\slashed{X}:=\sqrt{-1}\sum^n_{k=1}\hat{\gamma}^kX_k
\end{equation}
playing the role of $\slashed{D}$. We let $\mathbf{t}_{\hat{\alpha}}$ denote the class in $KK^\bullet(B,B\rtimes_{\hat{\alpha}}\hat{\R}^n)$ defined by the module $(\pi_{\hat{\alpha}},\Mn_N(B\rtimes_{\hat{\alpha}}\hat{\R}^n),\slashed{X}(\bone+\slashed{X}^2)^{-1/2}$. Here $\bullet\in\{0,1\}=\{\rm even,\rm odd\}$ is the parity of $n$. 
\end{dfn}
The operator $\hat{\gamma}^k$ on $\C^N$ anticommutes with $\gamma^j$ for each $j,k=1,\dots,n$. We shall see in the proof of the following why that is important. 
\begin{prop}
The class $\mathbf{t}_\alpha$ of the Kasparov $A$-$B$ module $(\pi_\alpha,\Mn_N(B),\slashed{F})$ is the Thom class of $(A,\R^n,\alpha)$. In particular, $\mathbf{t}_\alpha\in KK^\bullet(A,B)$ is a $KK$-equivalence of degree shift $n$, with inverse $\hat{\mathbf{t}}_\alpha:=\mathbf{t}_{\hat{\alpha}}\in KK^\bullet(B,B\rtimes_{\hat{\alpha}}\hat{\R}^n)= KK^\bullet(B,A\otimes\Ki)= KK^\bullet(B,A)$. 
\end{prop}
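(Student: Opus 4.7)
The plan is to prove that $\mathbf{t}_\alpha$ is a $KK$-equivalence with inverse $\mathbf{t}_{\hat\alpha}$, from which identification with the Thom class follows by the uniqueness (up to $KK$-equivalence) of any element implementing the Connes-Thom isomorphism and reducing to the Fack-Skandalis class in dimension one. Concretely, I would compute both internal Kasparov products $\mathbf{t}_\alpha\otimes_B\mathbf{t}_{\hat\alpha}$ and $\mathbf{t}_{\hat\alpha}\otimes_{A\otimes\Ki}\mathbf{t}_{\hat{\hat\alpha}}$ using Kucerovsky's unbounded criterion, and identify each with the identity via Takesaki-Takai duality (Theorem \ref{thmTakai}) and Bott periodicity.

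For $\mathbf{t}_\alpha\otimes_B\mathbf{t}_{\hat\alpha}$, the underlying Hilbert $A$-module is $\Mn_N(B)\,\widehat\otimes_B\,\Mn_N(B\rtimes_{\hat\alpha}\hat{\R}^n)$, which by Theorem \ref{thmTakai} is stably isomorphic to $A$. The natural unbounded representative of the product is the graded sum $\slashed{D}\,\widehat\otimes\,\bone + \bone\,\widehat\otimes\,\slashed{X}$. The factor of $\sqrt{-1}$ in the definition of $\slashed{X}$ and the switch from $\C_n$ to $\C_{-n}$ in Definition \ref{defofdualthom} are precisely what enforce $\gamma^j\hat\gamma^k + \hat\gamma^k\gamma^j = 0$ across the graded tensor product, which kills all cross terms in the square and yields
$$
(\slashed{D}\,\widehat\otimes\,\bone + \bone\,\widehat\otimes\,\slashed{X})^2 = \slashed{D}^2\,\widehat\otimes\,\bone + \bone\,\widehat\otimes\,\slashed{X}^2.
$$
Under Takesaki-Takai together with a Fourier transform on $\hat{\R}^n$ intertwining the $X_k$ with position operators on $\R^n$, this operator becomes unitarily equivalent to the flat Dirac operator on $\R^{2n}$ tensored trivially with $A$, the joint Clifford generators now spanning $\C_{n,n}\cong\Mn_{2^n}(\C)$. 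The resulting Kasparov module is thus the external product of $[\id_A]$ with the Bott class $\beta_{2n}\in KK^0(\C,C_0(\R^{2n}))$, which equals $[\id_\C]$ by Bott periodicity; hence the product is $[\id_A]$ in $KK^0(A,A)$. The reverse product is handled by the same argument applied to the dual dynamical system $(B,\hat{\R}^n,\hat\alpha)$, together with the identification $\hat{\hat\alpha}\sim\alpha\otimes\Ad(\lambda)$ from Theorem \ref{thmTakai}, which lets us identify $\mathbf{t}_{\hat{\hat\alpha}}$ with $\mathbf{t}_\alpha$ up to the Morita equivalence $A\otimes\Ki\sim_M A$.

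The main technical obstacle is verifying Kucerovsky's hypotheses for the proposed unbounded representative: essential selfadjointness of $\slashed{D}\,\widehat\otimes\,\bone + \bone\,\widehat\otimes\,\slashed{X}$ on a common core inside the tensor product Hilbert module, boundedness of commutators with elements of the dense subalgebras, and semi-boundedness of the graded anticommutator of the two summands. The Clifford anticommutation makes the last condition vanish outright, but extracting compact resolvents on the tensor-product Hilbert module, and passing cleanly through the Takesaki-Takai isomorphism (which involves a nontrivial Fourier transform) to identify the output with the Bott element, requires careful analytic work with unbounded operators on $C^*$-modules rather than on Hilbert spaces.
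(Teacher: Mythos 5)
Your route is genuinely different from the paper's and it is worth spelling out the contrast. The paper does \emph{not} compute the product $\mathbf{t}_\alpha\otimes_B\hat{\mathbf{t}}_\alpha$ directly for general $A$: the harmonic-oscillator/Bott-periodicity computation with the graded sum $\slashed{D}\,\widehat\otimes\,\bone+\bone\,\widehat\otimes\,\slashed{X}$ only appears in the normalization step, where $A=\C$ and $\alpha$ is trivial. After that the paper verifies the Fack--Skandalis axioms (naturality via $\hat\rho\circ\pi_\alpha=\pi_{\alpha'}\circ\rho$, compatibility with external products), introduces the rescaled actions $\alpha^\lambda$ on $C([0,1])\otimes A$ giving a homotopy $(\rho_\lambda)_{\lambda\in[0,1]}$, and then a short associativity computation in $KK$ pushes the normalization $\mathbf{t}_{\alpha^0}\otimes\hat{\mathbf{t}}_{\alpha^0}=1_A$ at $\lambda=0$ to the conclusion $\mathbf{t}_\alpha\otimes\hat{\mathbf{t}}_\alpha=1_A$ at $\lambda=1$; the reverse equality follows from Takesaki--Takai and the identity $\hat{\hat\rho}=\rho\otimes\id$. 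What the axiomatic route buys is that all the hard unbounded-Kasparov-module analysis is confined to the trivial system $(\C,\R^n,\mathrm{triv})$, where the harmonic oscillator and Kasparov's \cite[Thm.\ 5.7]{Kasp1} do the work; your route would require carrying out the Kucerovsky verification for an arbitrary separable $A$ with arbitrary $\alpha$, tracking $\slashed{D}\,\widehat\otimes\,\bone+\bone\,\widehat\otimes\,\slashed{X}$ through the nontrivial Fourier transform underlying Takesaki--Takai and showing $A$-compact resolvents on the resulting Hilbert $A\otimes\Ki$-module. That is not merely ``careful analytic work''; it is substantially more than what the literature supplies off the shelf, and it is precisely what the axiomatic shortcut avoids.

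There is also a genuine gap in your opening sentence. You argue that once $\mathbf{t}_\alpha$ is shown to be a $KK$-equivalence, identification with the Thom class ``follows by the uniqueness (up to $KK$-equivalence) of any element implementing the Connes--Thom isomorphism.'' That is not true: two classes in $KK^\bullet(A,B)$ implementing the same isomorphism of $K$-groups can differ by Kasparov product with any invertible element of $KK^0(A,A)$, and in fact they need not even induce the same map on $K$-theory for a different algebra. The uniqueness statement of Fack--Skandalis is for the \emph{family} of classes (one for each dynamical system) satisfying the three axioms --- normalization, naturality, and compatibility with external products --- so even on your route you would still have to verify the naturality and external-product axioms to conclude that $\mathbf{t}_\alpha$ is \emph{the} Thom class rather than just \emph{a} $KK$-equivalence of the right degree. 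Put differently, the axioms are not an optional alternative to the direct computation; they are needed in both approaches for the ``is the Thom class'' half of the statement, and the paper's proof uses them in addition to power the homotopy argument.
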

\begin{proof} We have to show that the class $\mathbf{t}_\alpha$ satisfies the axioms of the Thom class similar to those stated in \cite{FS}, \cite[\S2]{Rose4} for $1$-parameter actions. Most of the proof is very similar to the case $n=1$ but worth spelling out in detail (for $n=1$ there is no need to distinguish between $\R$ and $\hat{\R}$-actions).

\begin{proof}[Normalization]
Let $A=\C$, so that $\alpha$ is the trivial $\R^n$-action and $B=C_0(\R^n)$. We need to show that
\begin{equation}\label{normequli}
\mathbf{t}_\alpha\otimes_\C\hat{\mathbf{t}}_\alpha=1_\C,\qquad \hat{\mathbf{t}}_\alpha\otimes_\C\mathbf{t}_\alpha=1_B.
\end{equation}
But in this case, $\mathbf{t}_\alpha$ and $\hat{\mathbf{t}}_\alpha$ are the ``Dirac" and ``Dirac-dual" elements for $\R^n$ \cite[Def. 4.2]{Kasp3} and the equalities \eqref{normequli} are equivelent to Bott periodicity in $KK$ \cite[Thm. 5.7]{Kasp1}. So the result is well known. Let us just sketch the idea, so that we see the motivation for Definition \ref{defofdualthom}.

Both $B$ and the iterated crossed product $B\rtimes_{\hat{\alpha}}\R^n\cong\Ki$ act on $L^2(\R^n)$. Let $X_1,\dots,X_n$ be the generators of the unitary group implementing the dual action $\hat{\alpha}$ in $L^2(\R^n)$, which is the action of $\R^n$ by translations on $B$. 


As defined in the last section, the element $\mathbf{t}_\alpha$ is represented by $(\pi_\alpha,\Hi,\slashed{D})$, where $\slashed{D}=\sum_k\gamma^kD_k$ for unbounded selfadjoint operators $D_1,\dots,D_n$ on $L^2(\R^n)$ such that $[X_j,D_k]=\sqrt{-1}$. Consider the operator
$$
\slashed{K}:=\slashed{D}+\slashed{X}.
$$
Definition \ref{defofdualthom} ensures that $\slashed{K}^2$ is (minus a bounded normal operator) the $n$-dimensional harmonic oscillator, which has discrete spectrum. In particular, $(\bone+\slashed{K})^{-1}$ is compact. Thus $(\pi_\C,\Hi,\slashed{K}(\bone+\slashed{K}^2)^{-1/2})$ is a Kasparov $\C$-$\C$-module, where $\pi_\C(\lambda):=\lambda\bone$. In fact, $(\pi_\C,\Hi,\slashed{K}(\bone+\slashed{K}^2)^{-1/2})$ represents the Kasparov product $\mathbf{t}_\alpha\otimes_B\hat{\mathbf{t}}_\alpha$ \cite[Thm. 5.7]{Kasp1}. Moreover, $\slashed{K}$ is surjective and its kernel is the $1$-dimensional subspace spanned by the vector $\xi_0(t):=e^{-|t|^2}$. So $\slashed{K}$ is Fredholm, with Fredholm index $1$, and it represents the generator $[1_\C]\in KK^\bullet(\C,\C)$. So $\mathbf{t}_\alpha\otimes_B\hat{\mathbf{t}}_\alpha=1_\C$. The second equality in \eqref{normequli} follows from a version of Atiyah's rotation trick \cite{Atiy3} or, alternatively, Takesaki-Takai duality (cf. below in the last paragraph of this proof).
\end{proof}
\begin{proof}[Naturality]
Let $\rho:(A,\alpha)\to (A',\alpha')$ be an equivariant homomorphism of $C^*$-dynamical systems. Then $\rho$ induces a $*$-homomorphism $\hat{\rho}:B\to B'$ of the crossed products $B:=A\rtimes_\alpha\R^n$ and $B':= A'\rtimes_{\alpha'}\R^n$. For $f\in L^1(\R^n,A)$, one sets
$$
\hat{\rho}(\hat{\pi}_\alpha(f)):=\int_{\R^n}(\pi_\alpha\circ\rho)\big(f(t)\big)e^{-2\pi i t\cdot D}\, dt,
$$
where we recall that $\hat{\pi}_\alpha(f):=\int_{\R^n}\pi_\alpha\big(f(t)\big)e^{-2\pi i t\cdot D}\, dt$. In particular, we have
$$
\hat{\rho}\circ\pi_\alpha=\pi_{\alpha'}\circ\rho
$$
as maps from $A$ into $\Mi(A'\rtimes_{\alpha'}\R^n)$. The homomorphism $\rho$ defines a class $[\rho]\in KK^0(A,A')$ \cite[\S 17.1.2(a)]{Bla}. Let $\rho^*$ and $\rho_*$ be the maps on $KK$ given by left and right Kasparov product with the class $[\rho]$, respectively. Similarly, the ``dual" homomorphism $\hat{\rho}$ gives rise to a $KK$-class $[\hat{\rho}]\in KK^0(B,B')$ and we define $\hat{\rho}^*:=[\hat{\rho}]\otimes_{B'}\cdot$ and $\hat{\rho}_*:=\cdot\otimes_B[\hat{\rho}]$ as the operations of Kasparov product with the class $[\hat{\rho}]$.

In this notation, the naturality the Thom classes have to satisfy is the equality
$$
\hat{\rho}_*(\mathbf{t}_\alpha)=\rho^*(\mathbf{t}_{\alpha'})
$$
for all equivariant maps $\rho:(A,\alpha)\to (A',\alpha')$. Thus, we need to show that
$$
\hat{\rho}_*[\pi_\alpha,\Mn_N(B),\slashed{F}]:=[\pi_\alpha\otimes\id,\Mn_N(B)\otimes_{\hat{\rho}}\Mn_N(B'),\slashed{F}\otimes\bone]
$$
coincides with 
$$
\rho^*[\pi_{\alpha'},\Mn_N(B'),\slashed{F}']:=[\pi_{\alpha'}\circ\rho,\Mn_N(B'),\slashed{F}'].
$$
By definition of the balanced tensor product, $\Mn_N(B)\otimes_{\hat{\rho}}\Mn_N(B')=\overline{\hat{\rho}(\Mn_N(B))}$ is the closed right ideal in $\Mn_N(B')$ generated by $\hat{\rho}(\Mn_N(B))$ and $\pi\otimes\id$ becomes the representation $\hat{\rho}\circ\pi_\alpha$. Now $\hat{\rho}\circ\pi_\alpha=\pi_{\alpha'}\circ\rho$ gives the result. 
\end{proof}

\begin{proof}[Compatibility with external products]
We need to show that, for all $\mathbf{x}\in KK^0(A',A)$ and $\mathbf{y}\in KK^0(C',C)$,
$$
\mathbf{y}\boxtimes(\mathbf{x}\otimes_A\mathbf{t}_\alpha)=(\mathbf{y}\boxtimes\mathbf{x})\otimes_{C\otimes A}\mathbf{t}_{\id_C\otimes\alpha}.
$$
This property is clearly satisfied by $\mathbf{t}_\alpha=[\pi_\alpha,\Mn_N(B),\slashed{F}]$. For instance, take $C=C'$ and $\mathbf{y}=1_C$. Then $1_C\boxtimes\mathbf{t}_{\alpha}=\mathbf{t}_{\id_C\otimes\alpha}$. The general case follows by definition of $\boxtimes$.
\end{proof}
Thus, we have shown that $\mathbf{t}_\alpha$ satisfies the higher-dimensional analogue of the Fack-Skandalis axioms for the Thom element. The next task is to show that these axioms implies that $\mathbf{t}_\alpha$ is a $KK$-equivalence. The proof \cite[\S 19.3]{Bla}, \cite[Thm. 2.3]{Rose4}, \cite{FS} that $\mathbf{t}_\alpha$ is a $KK$-equivalence carries over completely. For completeness we reproduce the details. 

For each $\lambda\in[0,1]$ we have the rescaled $\R^n$-action
$$
\R\ni t\to \alpha^\lambda_t:=\alpha_{\lambda t},
$$
where $\lambda(t_1,\dots,t_n):=(\lambda t_1,\dots,\lambda t_n)$. We note that for $\lambda=1$ we have the original action $\alpha$ while $\alpha^0$ is the trivial action. Consider the $C^*$-algebra $A'=C([0,1],A)$ and the $\R^n$-action 
$$
(\alpha'_t(f))(\lambda):=\alpha^\lambda_t(f(\lambda))
$$
on $A'=C([0,1])\otimes A$. We use the shorthand notation $B':=A'\rtimes_{\alpha'}\R^n$ and $\mathbf{t}'_\alpha:=\mathbf{t}_{\alpha'}$. The evaluation $\rho_\lambda:A'\to A$, given by $\rho_\lambda(f):=f(\lambda)$, is equivariant: $\alpha^\lambda\circ\rho_\lambda=\alpha'$.  
By naturality of the Thom elements, we therefore have
$$
(\rho_\lambda)_*(\hat{\mathbf{t}}_{\alpha}')=\hat{\rho}_\lambda^*(\hat{\mathbf{t}}_{\alpha^\lambda}),\qquad
(\hat{\rho}_\lambda)_*(\mathbf{t}_{\alpha}')=\rho_\lambda^*(\mathbf{t}_{\alpha^\lambda}).
$$
As above, the maps $\rho^*_\lambda$ and $(\rho_\lambda)_*$ are defined as the operations of left and right Kasparov product with a class $[\rho_\lambda]\in KK^0(A',A)$. 
By associativity of the Kasparov product, we have
\begin{align*}
(\rho_\lambda)_*(\mathbf{t}_{\alpha}'\otimes_{B'}\hat{\mathbf{t}}_{\alpha}')&=\mathbf{t}_{\alpha}'\otimes_{B'}(\rho_\lambda)_*(\hat{\mathbf{t}}_{\alpha}')
\\&=\mathbf{t}_{\alpha}'\otimes_{B'}\hat{\rho}_\lambda^*(\hat{\mathbf{t}}_{\alpha^\lambda})
\\&=(\hat{\rho}_\lambda)_*(\mathbf{t}_{\alpha}')\otimes_{B'}\hat{\mathbf{t}}_{\alpha^\lambda}
\\&=\rho_\lambda^*(\mathbf{t}_{\alpha^\lambda})\otimes_{B'}\hat{\mathbf{t}}_{\alpha^\lambda}
\\&=\rho_\lambda^*(\mathbf{t}_{\alpha^\lambda}\otimes_{B'}\hat{\mathbf{t}}_{\alpha^\lambda})
\\&=[\rho_\lambda]\otimes_A(\mathbf{t}_{\alpha^\lambda}\otimes_{B'}\hat{\mathbf{t}}_{\alpha^\lambda}).
\end{align*}
The family $(\rho_\lambda)_{\lambda\in[0,1]}$ is a continuous homotopy so each $\rho_\lambda$ induces the same map $(\rho_\lambda)_*$ on $KK$.  So for any $\lambda\in[0,1]$ we have
$$
(\mathbf{t}_{\alpha}'\otimes_{B'}\hat{\mathbf{t}}_{\alpha}')\otimes_{A'}[\rho_0]
=[\rho_0]\otimes_A(\mathbf{t}_{\alpha^\lambda}\otimes_{B'}\hat{\mathbf{t}}_{\alpha^\lambda}).
$$
Now $\rho_0(f)=f(0)$ is the evaluation at the endpoint, and the map $\iota(a):=a\otimes\bone$ is a homotopy inverse to $\rho_0$. So we have an inverse $\iota^*=[\iota]\otimes_{A'}$ to $\rho_0^*=[\rho_0]\otimes_A$ and we can make the rearrangement
$$
[\iota]\otimes_{A'}(\mathbf{t}_{\alpha}'\otimes_{B'}\hat{\mathbf{t}}_{\alpha}')\otimes_{A'}[\rho_0]
=\mathbf{t}_{\alpha^\lambda}\otimes_{B}\hat{\mathbf{t}}_{\alpha^\lambda}.
$$
The left-hand side is independent of $\lambda\in[0,1]$, so the right-hand side must be independent of $\lambda\in[0,1]$ as well. But for $\lambda=0$ we know from the normalization and naturality axioms that $\mathbf{t}_{\alpha^0}\otimes_{B}\hat{\mathbf{t}}_{\alpha^0}=1_A$. At $\lambda=1$ we obtain the desired result $\mathbf{t}_{\alpha}\otimes_{B}\hat{\mathbf{t}}_{\alpha}=1_A$.

Using Takesaki-Takai duality we have $\hat{\hat{\mathbf{t}}}_\alpha=\mathbf{t}_\alpha\otimes_\C 1_\Ki$. Since $\hat{\rho}:=\hat{\rho}_\lambda$ leaves $C^*(\R^n)\subset B'$ untouched and intertwines $\pi_{\alpha'}$ with $\pi_\alpha$, we have again an equivariant map
$$
\hat{\rho}:(B',\hat{\alpha'})\to (B,\hat{\alpha})
$$
of the dual dynamical systems. Therefore, we can iterate the process and obtain a map $\hat{\hat{\rho}}:B\rtimes_{\hat{\alpha}}\hat{\R}^n\to B'\rtimes_{\hat{\alpha'}}\hat{\R}^n$ between the iterated crossed products. Under the isomorphism $B\rtimes_{\hat{\alpha}}\hat{\R}^n\cong A\otimes\Ki$ one checks that $\hat{\hat{\rho}}$ becomes
$$
\hat{\hat{\rho}}=\rho\otimes\id.
$$
So by replacing $\alpha$ with $\hat{\alpha}$ we obtain $\hat{\mathbf{t}}_{\alpha}\otimes_A\mathbf{t}_{\alpha}=1_B$. That finishes the proof.
\end{proof}

\subsection{Kasparov products with the Thom class} 
We shall show that the Kasparov product $[x]\otimes_A[\boldsymbol\pi_B,\boldsymbol B,\slashed{\boldsymbol{R}}]$ with $K$-theory classes $[x]\in K_\bullet(A)$ is equal to the $K_0(B)$-valued index of a Fredholm operator on the Hilbert $B$-module $\boldsymbol{B}_B$. 

\begin{Notation}
In the following statements we want to allow for matrices over $A$. In order to make the formulas readable, we shall make the following convention. For $x\in\Mn_r(A)=\Mn_r(\C)\otimes A$, we write
$$
\boldsymbol\pi_B(x):=(\id\otimes\boldsymbol\pi_B)(x),\qquad \slashed{\boldsymbol{R}}\boldsymbol\pi_B(x):=(1_r\otimes\slashed{\boldsymbol{R}})\boldsymbol\pi_B(x)
$$ 
as operators on $\C^r\otimes\boldsymbol{B}=\boldsymbol{B}^{\oplus r}$. There should be no confusion since without this convention, the expression $\boldsymbol\pi_B(x)$ etc. does not make sense unless $r=1$.
\end{Notation}

\begin{thm}\label{oddKaspprod} 
Suppose that $n$ is odd. Let $u\in\Un_r(A^\sim)$ be a unitary over $A^\sim$ and denote by $[u]\in K_1(A)$ the homotopy class of $u$. Then we have the equality
$$
[u]\otimes_A[\boldsymbol\pi_B,\boldsymbol B,\slashed{\boldsymbol{R}}]=\Index(\slashed{\boldsymbol{P}}\boldsymbol\pi_B(u)\slashed{\boldsymbol{P}})
$$
in $K_0(B)$.
\end{thm}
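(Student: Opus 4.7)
The strategy is to identify the Kasparov module $(\boldsymbol\pi_B,\boldsymbol B_B,\slashed{\boldsymbol R})$ with the Toeplitz extension of Proposition~\ref{exactsequence} and then invoke the classical agreement between Kasparov product with an extension class and the connecting map in $K$-theory. By Theorem~\ref{Thomthm}(ii) the Kasparov module $(\boldsymbol\pi_B,\Mn_N(\boldsymbol B),\slashed{\boldsymbol R})$ represents $\mathbf{t}_\alpha$; replacing $\Mn_N(\boldsymbol B)$ by $\boldsymbol B$ amounts to a stable equivalence that leaves the $KK$-class unchanged after the Morita identification $K_0(\Mn_N(\boldsymbol B))\cong K_0(B)$. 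For odd $n$, the last sentence of Theorem~\ref{Thomthm} identifies $\mathbf{t}_\alpha\in KK^1(A,B)$ with the class in $\Ext(A,B)^{-1}$ of the Toeplitz extension, whose Busby invariant was computed in the proof of Proposition~\ref{exactsequence} to be $\boldsymbol\gamma_\alpha(a)=q(\slashed{\boldsymbol P}\boldsymbol\pi_\alpha(a)\slashed{\boldsymbol P})$.

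Next I would appeal to the general fact (due to Kasparov; see Blackadar \S19 or Jensen--Thomsen) that for any semisplit extension $0\to J\to E\to A\to 0$ of separable $C^*$-algebras with class $\tau\in\Ext(A,J)^{-1}=KK^1(A,J)$, the operation of right Kasparov product
$$
\cdot\otimes_A\tau\colon K_1(A)\longto K_0(J)
$$
coincides with the index map of the six-term exact sequence associated with the extension. Combined with the preceding paragraph, this reduces the theorem to computing the index map $\partial\colon K_1(A)\to K_0(\Mn_N(\boldsymbol B))\cong K_0(B)$ of the Toeplitz extension on the class $[u]$.

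For this last step I would use the concrete description of the index map via Fredholm lifts. Given the unitary $u\in\Un_r(A^\sim)$, the compression $\slashed{\boldsymbol P}\boldsymbol\pi_B(u)\slashed{\boldsymbol P}$ lies in $\Mn_r(\boldsymbol\Ti)$ and, by the formula for the Busby invariant, projects to $u$ under the quotient map $\boldsymbol\Ti\to A$. By Lemma~\ref{Fredlemma} it is Fredholm relative to $\Mn_N(\boldsymbol B)$, so it determines a $K_0(\Mn_N(\boldsymbol B))$-valued index. By the standard construction of the boundary map for a Fredholm lift of a unitary (see Blackadar \S8 or the discussion preceding Lemma~\ref{Fredlemma}), this index is precisely $\partial[u]$. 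Under the Morita identification this is the element $\Index(\slashed{\boldsymbol P}\boldsymbol\pi_B(u)\slashed{\boldsymbol P})\in K_0(B)$ appearing in the statement, which proves the theorem.

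The main technical obstacle, rather than any of the individual steps (all of which are standard once the appropriate identifications are in place), is keeping track of signs and orientation conventions: the agreement between the phase $\slashed{\boldsymbol R}$ chosen in the Kasparov module, the spectral projection $\slashed{\boldsymbol P}=(\bone+\slashed{\boldsymbol R})/2$ used to build the Toeplitz extension, and the sign convention for the boundary map of the six-term sequence must all be compatible so that no overall sign appears. If one prefers to bypass the extension picture, one can instead verify the formula directly via Kucerovsky's criterion, representing $[u]$ by the odd Kasparov $\C$-$A$ module canonically associated to $u$ and checking that $\Index(\slashed{\boldsymbol P}\boldsymbol\pi_B(u)\slashed{\boldsymbol P})$ satisfies the connection and positivity conditions for the unbounded Kasparov product with $(\boldsymbol\pi_B,\boldsymbol B_B,\slashed{\boldsymbol R})$; however, the extension-based argument outlined above is cleaner and makes the stated formula transparent.
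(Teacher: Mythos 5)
Your proof is correct, but it takes a genuinely different route from the paper's. The paper's argument represents $[u]$ by the homomorphism $\rho_u\colon C_0(\R)^\sim\to\Mi(A)$ (using $K_1(A)\cong KK^0(C_0(\R),A)$), computes the Kasparov product explicitly as $[\boldsymbol\pi_B\circ\rho_u,\boldsymbol B_B,\slashed{\boldsymbol R}]\in KK^1(C_0(\R),B)$, and then traces this element through the isomorphism $KK^1(C_0(\R),B)\cong K_1(\Qi(B\otimes\Ki))$ of \cite[Prop. 17.5.7]{Bla}, so that the final step is applying the index map of the Calkin extension $0\to B\otimes\Ki\to\Mi(B\otimes\Ki)\to\Qi(B\otimes\Ki)\to 0$ to the image class $[q(\slashed{\boldsymbol P}\boldsymbol\pi_B(u)\slashed{\boldsymbol P})]$. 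You instead invoke the identification of $\mathbf{t}_\alpha$ with the Toeplitz extension class in $\Ext(A,B)^{-1}$ (which the paper establishes after Proposition \ref{exactsequence}, before Theorem \ref{oddKaspprod}, so there is no circularity), apply the general compatibility between Kasparov product with an extension class and the six-term index map $\partial\colon K_1(A)\to K_0(\Mn_N(\boldsymbol B))$, and evaluate $\partial[u]$ by the Fredholm-lift description. Your route is more structural and makes the role of the Toeplitz extension transparent, while the paper's is more self-contained in that it needs only the generic $KK$-to-Calkin isomorphism rather than the full extension-class-versus-boundary-map comparison; both ultimately reduce to closely related index-map arguments for the two extensions, which agree because the Toeplitz extension is the pullback of the Calkin extension along the Busby invariant $\boldsymbol\gamma_\alpha$. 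One point you correctly flag but do not resolve is the sign/orientation convention in $KK^1\cong\Ext^{-1}$ and in the comparison with the connecting map; for the result to match exactly (no overall sign) the conventions must be chosen consistently, and in a finished write-up this should be verified rather than asserted.
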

\begin{proof} For ease of notation, assume $r=1$. 

To a unitary $u\in A^\sim$ there corresponds a homomorphism $\rho_u:C_0(\R)^{\sim}\to\Mi(A)$ which takes $z-\bone$ to $u-\bone$ under the identification of $K_1(A)$ with $KK^0(C_0(\R),A)$. The Kasparov product of the class of $\rho_u$ with the element $[\boldsymbol\pi_B,\boldsymbol B,\slashed{\boldsymbol{R}}]$ is given by
$$
[\rho_u]\otimes_A[\boldsymbol\pi_B,\boldsymbol B,\slashed{\boldsymbol{R}}]=[\boldsymbol\pi_B\circ\rho_u,\boldsymbol B,\slashed{\boldsymbol{R}}],
$$
which is an element of $KK^1(C_0(\R),B)$. A homomorphism such as $\boldsymbol\pi_B\circ\rho_u:C_0(\R)^{\sim}\to\Mi(B\otimes\Ki)$ defines a unitary operator $U=\boldsymbol\pi_B\circ\rho_u(z)$ in $\Mi(B\otimes\Ki)$ (which in the present case is just $\boldsymbol\pi_B(u)$), and conversely a unitary in $\Mi(B\otimes\Ki)$ determines a homomorphism from $C_0(\R)^{\sim}$ to $\Mi(B\otimes\Ki)$. Homotopy equivalence of homomorphisms from $C_0(\R)^{\sim}$ to $\Mi(B\otimes\Ki)$ translates into homotopy equivalence of the corresponding unitaries in $\Mi(B\otimes\Ki)$. So the class $[\boldsymbol\pi_B\circ\rho_u,\boldsymbol B,\slashed{\boldsymbol{R}}]$ is represented by a unitary $U=\boldsymbol\pi_B(u)$ in $\Mi(B\otimes\Ki)$ which commutes with $\slashed{\boldsymbol{R}}$ modulo $B\otimes\Ki$. 

Therefore, if we set $\slashed{\boldsymbol{P}}:=(\bone+\slashed{\boldsymbol{R}})/2$ then the operator $\slashed{\boldsymbol{P}}\boldsymbol\pi_B(u)\slashed{\boldsymbol{P}}$ is a Fredholm operator, i.e. it is invertible modulo $B\otimes\Ki$. Under the isomorphism \cite[Prop. 12.2.1]{Bla}, \cite[Cor. 10.3]{Ols} 
$$
K_0(B)\cong K_1(\Qi(B\otimes\Ki)),\qquad K_1(B)\cong K_0(\Qi(B\otimes\Ki)),
$$
our class can be identified with the class $[q(\slashed{\boldsymbol{P}}\boldsymbol\pi_B(u)\slashed{\boldsymbol{P}})]$ in $K_1(\Qi(B\otimes\Ki))$, where $q:\Mi(B\otimes\Ki)\to\Qi(B\otimes\Ki)$ is the quotient map. 

What we have done so far is just to trace the fate of the representative $(\boldsymbol\pi_B\circ\rho_u,\boldsymbol B,\slashed{\boldsymbol{R}})$ under the isomorphism of $KK^1(C_0(\R),B)$ with $K_1(\Qi(B\otimes\Ki))$ \cite[Prop. 17.5.7]{Bla}. 

Equivalence classes of Fredholm operators on the Hilbert $B$-module $\ell^2(\N;B)$ correspond to elements in $K_0(B)$ via the index map (one of the connecting maps in the $6$-term exact sequence in $K$-theory). The image of $[\rho_u]\otimes_A[\boldsymbol\pi_B,\boldsymbol B,\slashed{\boldsymbol{R}}]$ under this map is the $K_0(B)$-valued index of the Fredholm operator $\slashed{\boldsymbol{P}}\boldsymbol\pi_B(u)\slashed{\boldsymbol{P}}$:
$$
\delta([q(\slashed{\boldsymbol{P}}\boldsymbol\pi_B(u)\slashed{\boldsymbol{P}})])
=[\Ker(\slashed{\boldsymbol{P}}\boldsymbol\pi_B(u)\slashed{\boldsymbol{P}})]
-[\Ker(\slashed{\boldsymbol{P}}\boldsymbol\pi_B(u^*)\slashed{\boldsymbol{P}})].
$$
\end{proof}
As mentioned in Remark \ref{evenremark}, if the spectral triple $(\Ai,\Hi,\slashed{D})$ is even then so is the doubled triple $(\Ai,\boldsymbol\Hi,\slashed{\boldsymbol{D}})$. The phase $\slashed{\boldsymbol{R}}=\slashed{\boldsymbol{D}}|\slashed{\boldsymbol{D}}|^{-1}$ decomposes in $\boldsymbol\Hi=\boldsymbol\Hi_+\oplus\boldsymbol\Hi_-$ as
$$
\slashed{\boldsymbol{R}}=\begin{pmatrix}0&\slashed{\boldsymbol{R}}_-\\\slashed{\boldsymbol{R}}_+&0\end{pmatrix}
$$
with $\slashed{\boldsymbol{R}}_-=(\slashed{\boldsymbol{R}}_+)^*$. Under the decomposition $\Hi=\Hi_+\oplus\Hi_-$, the algebra $B$ splits as $B=B_+\oplus B_-$, and this induces an even grading $\boldsymbol B=\boldsymbol B_+\oplus\boldsymbol B_-$ of the Hilbert $B$-module $\boldsymbol B$. Here $\boldsymbol B_+$ is the part of $\boldsymbol B$ commuting with the grading operator $\Gamma=\diag(\bone,-\bone)$ and $\boldsymbol B_-$ is the part anti-commuting with $\Gamma$ (cf. Remark \ref{evenremark}). 
\begin{thm}\label{evenKaspprod} 
Suppose that $(\Ai,\Hi,\slashed{D})$ is even. Let $e,f\in\Mn_\infty(\Ai^\sim)$ be projections over $\Ai^\sim$ with $[e]-[f]\in K_0(A)$. Then we have the equality
$$
([e]-[f])\otimes_A[\boldsymbol\pi_B,\boldsymbol B,\slashed{\boldsymbol{R}}]=\Index(\boldsymbol\pi_B(e)\slashed{\boldsymbol{R}}_+\boldsymbol\pi_B(e))
-\Index(\boldsymbol\pi_B(f)\slashed{\boldsymbol{R}}_+\boldsymbol\pi_B(f))
$$
in $K_0(B)$, where $\boldsymbol\pi_B(e)\slashed{\boldsymbol{R}}_+\boldsymbol\pi_B(e)$ is viewed as an operator from $\boldsymbol\pi_B(e)\boldsymbol{B}_+^{\oplus r}$ to $\boldsymbol\pi_B(e)\boldsymbol{B}_-^{\oplus r}$.
\end{thm}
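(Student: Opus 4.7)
The plan is to mirror the proof of Theorem \ref{oddKaspprod} using the standard identification of $KK^0(\C, B)$ with $K_0(B)$ via Fredholm operators on Hilbert $B$-modules. By bilinearity of the Kasparov product, it suffices to establish, for a single projection $e \in \Mn_r(\Ai^\sim)$,
$$[e] \otimes_A [\boldsymbol\pi_B, \boldsymbol B_B, \slashed{\boldsymbol R}] = \Index(\boldsymbol\pi_B(e)\slashed{\boldsymbol R}_+\boldsymbol\pi_B(e))$$
in $K_0(B)$; the assertion for $[e] - [f]$ then follows by subtraction.

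First I would represent $[e] \in K_0(A)$ by the $*$-homomorphism $\rho_e \colon \C \to \Mn_r(A^\sim)$ sending $1 \mapsto e$, yielding a class $[\rho_e] \in KK^0(\C, A)$. Since the Kasparov product with a $*$-homomorphism is just composition of representations,
$$[\rho_e] \otimes_A [\boldsymbol\pi_B, \Mn_N(\boldsymbol B), \slashed{\boldsymbol R}] = [\boldsymbol\pi_B \circ \rho_e,\, \Mn_N(\boldsymbol B)^{\oplus r},\, \slashed{\boldsymbol R}],$$
which is an even Kasparov $\C$-$B$-module whose representation sends $\lambda \in \C$ to $\lambda\, \boldsymbol\pi_B(e)$.

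Next, since $(\Ai, \Hi, \slashed{D})$ is even, the Hilbert $B$-module $\boldsymbol B^{\oplus r}$ is $\Z_2$-graded (Remark \ref{evenremark}), $\boldsymbol\pi_B(e)$ is an even multiplier, and $\slashed{\boldsymbol R}$ is odd with off-diagonal parts $\slashed{\boldsymbol R}_\pm$. Under the standard isomorphism $KK^0(\C, B) \cong K_0(B)$ (see e.g.\ \cite[\S 17.5]{Bla}), an even Kasparov $\C$-$B$-module $(\pi, E, F)$ is sent to the $K_0(B)$-index of the compression $\pi(1) F_+ \pi(1)$ regarded as a Fredholm map from $\pi(1) E_+$ to $\pi(1) E_-$. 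Applied to our module, this yields exactly $\Index(\boldsymbol\pi_B(e)\slashed{\boldsymbol R}_+\boldsymbol\pi_B(e))$ acting from $\boldsymbol\pi_B(e) \boldsymbol B_+^{\oplus r}$ to $\boldsymbol\pi_B(e) \boldsymbol B_-^{\oplus r}$, as desired.

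The main technical point, as in the odd case, is verifying that the compressed operator is a regular $B$-Fredholm operator, i.e.\ invertible modulo $\Mn_N(\boldsymbol B)$. This follows from $[\slashed{\boldsymbol R}, \boldsymbol\pi_B(e)] \in \Mn_N(\boldsymbol B)$ and $\boldsymbol\pi_B(e)(\slashed{\boldsymbol R}^2 - \bone) \in \Mn_N(\boldsymbol B)$, both established in Proposition \ref{crucialcorrollary}, which together imply that $\boldsymbol\pi_B(e)\slashed{\boldsymbol R}_-\slashed{\boldsymbol R}_+\boldsymbol\pi_B(e)$ differs from $\boldsymbol\pi_B(e)$ by an element of $\Mn_N(\boldsymbol B)$, and similarly with the roles of $\slashed{\boldsymbol R}_\pm$ reversed. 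Beyond this, the argument is formal, combining functoriality of the Kasparov product with a homomorphism and the standard definition of the Kasparov-to-$K_0$ map; no new estimates are required beyond those developed in \S\ref{Diracsection}.
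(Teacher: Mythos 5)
Your proposal is essentially correct and follows the same route as the paper: represent the $K_0$ class by a $*$-homomorphism from $\C$, push it through the Kasparov product with the class of the doubled module (which, for a homomorphism, is just composition of representations), and then read off the $K_0(B)$-valued Fredholm index of the compression of $\slashed{\boldsymbol R}_+$. The Fredholmness argument, using $[\slashed{\boldsymbol R},\boldsymbol\pi_B(e)]\in\Mn_N(\boldsymbol B)$ together with $\slashed{\boldsymbol R}^2=\bone$, is exactly the point of introducing the massive double $\slashed{\boldsymbol D}$ instead of working with $\slashed{F}$, and the paper makes the same use of it.

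The one place your write-up is slightly looser than the paper's proof is the opening reduction to a single projection $e$. If $e\in\Mn_r(\Ai^\sim)$ genuinely has a nonzero scalar part, then $[e]$ alone is a class in $K_0(A^\sim)$ rather than in $K_0(A)$, so writing $[e]\otimes_A\mathbf{t}_\alpha$ is not immediately meaningful: the Kasparov product you want is over $A$. This is why the paper packages the pair $(e,f)$ (with $e-f\in\Mn_r(A)$) into a single homomorphism $\rho:\C\to\Mi(A\oplus A)$ sending $1\mapsto e\oplus f$ \emph{before} forming the product, and only extracts the difference of indices at the very end. Your argument can be repaired by either working with the pair throughout, or by first extending $\mathbf{t}_\alpha$ to a class in $KK^0(A^\sim,B)$ via the unital extension of $\boldsymbol\pi_B$ and noting that the scalar parts of $e$ and $f$ contribute identically. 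Since the final statement subtracts the $f$-term, the conclusion is unaffected; just flag that the intermediate objects live over $A^\sim$ rather than $A$.
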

\begin{proof} Again we consider matrices of size $r=1$ for simplicity. So let $e,f\in A^\sim$ be projections with $e-f\in A$.

An isomorphism $K_0(A)\cong KK^0(\C,A)$ (described e.g. in \cite[Prop. 2.4.26]{An3}) sends the class $[e]-[f]$ to (the $KK$-equivalence class of) a homomorphism $\rho$ from $\C$ to $\Mi(A\oplus A)$ sending $1\in\C$ to $e\oplus f$. The Kasparov product with the $KK$-class of the spectral triple is then the element
$$
[\rho]\otimes_{A}[\boldsymbol\pi_B,\boldsymbol{B}_B,\slashed{\boldsymbol{R}}]=[\boldsymbol\pi_B\circ\rho,\boldsymbol{B}_B,\slashed{\boldsymbol{R}}]
$$ 
in $KK^0(\C,B)$. The map $\boldsymbol\pi_B\circ\rho$ sends $1\in\C$ to the operator $\boldsymbol\pi_B\circ\rho(1)=\boldsymbol\pi_B(e)-\boldsymbol\pi_B(f)$. 
From 
$$
(\boldsymbol\pi_B(e)\slashed{\boldsymbol{R}}_+\boldsymbol\pi_B(e))(\boldsymbol\pi_B(e)\slashed{\boldsymbol{R}}_-\boldsymbol\pi_B(e))=\boldsymbol\pi_B(e)\quad \text{mod }B\otimes\Ki,
$$
$$
(\boldsymbol\pi_B(e)\slashed{\boldsymbol{R}}_-\boldsymbol\pi_B(e))(\boldsymbol\pi_B(e)\slashed{\boldsymbol{R}}_+\boldsymbol\pi_B(e))=\boldsymbol\pi_B(e)\quad \text{mod }B\otimes\Ki,
$$
we see that $\boldsymbol\pi_B(e)\slashed{\boldsymbol{R}}_+\boldsymbol\pi_B(e)$ is Fredholm as an operator from the module $\boldsymbol\pi_B(e)\boldsymbol{B}_+$ to the module $\boldsymbol\pi_B(e)\boldsymbol{B}_-$.

We identify $q(\boldsymbol\pi_B(e)\slashed{\boldsymbol{R}}_+\boldsymbol\pi_B(e))$ with a unitary in $\Qi(B\otimes\Ki)$ and similarly with $f$ replacing $e$. The difference $q(\boldsymbol\pi_B(e)\slashed{\boldsymbol{R}}_+\boldsymbol\pi_B(e))
-q(\boldsymbol\pi_B(f)\slashed{\boldsymbol{R}}_+\boldsymbol\pi_B(f))$ represents the class $[\rho]\otimes_{A}[\boldsymbol\pi_B,\boldsymbol{B}_B,\slashed{\boldsymbol{R}}]$ under the identification of $KK^0(\C,B)$ with $K_1(\Qi(B\otimes\Ki))$. 

The isomorphism $\delta:K_1(\Qi(B\otimes\Ki))\to K_0(B)$ just sends the homotopy class $[q(\boldsymbol\pi_B(e)\slashed{\boldsymbol{R}}_+\boldsymbol\pi_B(e))-q(\boldsymbol\pi_B(f)\slashed{\boldsymbol{R}}_+\boldsymbol\pi_B(f))]$ to the $K_0(B)$-valued index of $\boldsymbol\pi_B(e)\slashed{\boldsymbol{R}}_+\boldsymbol\pi_B(e)-\boldsymbol\pi_B(f)\slashed{\boldsymbol{R}}_+\boldsymbol\pi_B(f)$, as asserted.
\end{proof}

\begin{Remark}[The obstruction to using $\slashed{F}$]
In general we cannot use $\slashed{F}:=\slashed{D}(1+\slashed{D}^2)^{-1/2}$ instead of $\slashed{\boldsymbol{R}}$ in the above pairings. The problem is that $e\slashed{F}_+e$ need not be Fredholm when $A$ is nonunital and $\slashed{F}^2\ne \bone$. We recall the details about this fact from \cite[\S 2.3]{CGRS1}.

Generally, let $A$ and $B$ be $C^*$-algebras and let $(\pi_B,X_B,F)$ be an even Kasparov $A$-$B$ module. Let $e\in A^\sim$ be a projection. We would like $\pi_B(e)F_+\pi_B(e)$ to be a Fredholm operator from $\pi_B(e)X_+$ to $\pi_B(e)X_-$. So we try to show that $\pi_B(e)F_+\pi_B(e)$ is invertible modulo $\Ki_B(X)$. We have
$$
(\pi_B(e)F_-\pi_B(e))(\pi_B(e)F_+\pi_B(e))=\pi_B(e)F_-[\pi_B(e),F_+]\pi_B(e)+\pi_B(e)(F_-F_+-\bone)\pi_B(e)+\pi_B(e).
$$ 
The term $\pi_B(e)F_-[\pi_B(e),F_+]\pi_B(e)$ is compact. Indeed $[\pi_B(a),F_+]$ was required to be compact for $a\in A$ by definition of Kasparov module, and elements of the form $\lambda\bone\in A^\sim$ have trivial commutators. The problematic term is $\pi_B(e)(F_-F_+-\bone)\pi_B(e)$, which is guaranteed to be compact only for $e\in A$. 

This is where the condition $F^2=\bone$ becomes important. If $F^2=\bone$ then $F_-F_+-\bone=0$ and so $\pi_B(e)F_+\pi_B(e)$ is Fredholm. 
\end{Remark}

\subsection{Numerical index}

We saw in Theorem \ref{oddKaspprod} that for any $[u]\in K_1(A)$, the element $T_u:=\slashed{\boldsymbol{P}}\boldsymbol{\pi}_B(u)\slashed{\boldsymbol{P}}\in\Mi(B\otimes\Ki)$ is a Fredholm operator on the Hilbert $B$-module $\ell^2(\N;B)$. The projections $\Ker(T_u)$ and $\Ker(T_u^*)$ are of finite rank and can be regarded as elements of $B\otimes\Ki$. Now the dual trace $\hat{\tau}$ induces a homomorphism $\hat{\tau}_*:K_0(B)\to\R$. 
Therefore, the Fredholm operator $T_u$ is $(\hat{\tau}\otimes\Tr)$-Fredholm in the ``semifinite" sense of \cite{Az1, BeFa1, Breu1, Breu2, CGRS1}, where $\Tr$ is operator trace on $\Ki$. 

Similarly, if $(\pi_\alpha,\Mn_N(B),\slashed{F})$ is even then the Thom class produces $(\hat{\tau}\otimes\Tr)$-Fredholm operators from elements of $K_0(A)$. 

\begin{dfn}[{cf. \cite[Def. 2.12]{CGRS1}}]
The \textbf{index pairing} of $[x]\in K_\bullet(A)$ with the Thom class $\mathbf{t}_\alpha=[\pi_\alpha,\Mn_N(B),\slashed{F}]\in KK^\bullet(A,B)$ is the real number 
$$
\bra[x],\mathbf{t}_\alpha\ket:=\hat{\tau}_*([x]\otimes_A\mathbf{t}_\alpha)
$$
obtained by applying the homomorphism $\hat{\tau}_*:K_0(B)\to\R$ to the Kasparov product $[x]\otimes_A\mathbf{t}_\alpha$. 
\end{dfn}

\begin{cor}\label{oddandevenFredpair} 
Suppose that $n$ is odd. Let $u\in\Un_\infty(\Ai^\sim)$ be a unitary over $\Ai^\sim$ and denote by $[u]\in K_1(A)$ the homotopy class of $u$. Then we have the equality
$$
\bra[u],\mathbf{t}_\alpha\ket=\Index_{\hat{\tau}}(\slashed{\boldsymbol{P}}\boldsymbol\pi_B(u)\slashed{\boldsymbol{P}})
$$
in $\hat{\tau}_*(K_0(B))\subseteq\R$.

Suppose that $n$ is even. Let $e,f\in\Mn_\infty(\Ai^\sim)$ be projections over $\Ai^\sim$ with $[e]-[f]\in K_0(A)$. Then we have the equality
$$
\bra[e]-[f],\mathbf{t}_\alpha\ket=\Index_{\hat{\tau}}(\boldsymbol\pi_B(e)\slashed{\boldsymbol{R}}_+\boldsymbol\pi_B(e))
-\Index_{\hat{\tau}}(\boldsymbol\pi_B(f)\slashed{\boldsymbol{R}}_+\boldsymbol\pi_B(f))
$$
in $\hat{\tau}_*(K_0(B))\subseteq\R$.
\end{cor}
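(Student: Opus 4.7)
The strategy is simply to apply the homomorphism $\hat{\tau}_*:K_0(B)\to\R$ to both sides of the equalities established in Theorems \ref{oddKaspprod} and \ref{evenKaspprod}, and then identify the result with the semifinite Fredholm index. First I would unpack the definition: by Proposition \ref{crucialcorrollary}, the Kasparov module $(\boldsymbol\pi_B,\Mn_N(\boldsymbol B),\slashed{\boldsymbol R})$ represents the same class $\mathbf{t}_\alpha$ as $(\pi_B,\Mn_N(B),\slashed{F})$, so
\[
\bra [x],\mathbf{t}_\alpha\ket=\hat{\tau}_*\big([x]\otimes_A[\boldsymbol\pi_B,\boldsymbol B_B,\slashed{\boldsymbol R}]\big)
\]
for any $[x]\in K_\bullet(A)$. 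This reduces everything to computing the right-hand side via the theorems of the previous subsection.

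For odd $n$ and $u\in\Un_\infty(\Ai^\sim)$, Theorem \ref{oddKaspprod} gives
\[
[u]\otimes_A[\boldsymbol\pi_B,\boldsymbol B_B,\slashed{\boldsymbol R}]=\Index\big(\slashed{\boldsymbol P}\boldsymbol\pi_B(u)\slashed{\boldsymbol P}\big)=[\Ker(\slashed{\boldsymbol P}\boldsymbol\pi_B(u)\slashed{\boldsymbol P})]-[\Ker(\slashed{\boldsymbol P}\boldsymbol\pi_B(u^*)\slashed{\boldsymbol P})]
\]
in $K_0(B)$. The key identification is that, for an operator $T\in\Mi(B\otimes\Ki)$ that is Fredholm relative to $B\otimes\Ki$ in the sense of Theorem \ref{oddKaspprod}, the kernel projections $\Ker(T)$ and $\Ker(T^*)$ define elements of $(B\otimes\Ki)^\sim$ and their $\hat{\tau}\otimes\Tr$-values are finite since these projections lie in the ideal of $(\hat{\tau}\otimes\Tr)$-compacts. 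Therefore by the very definition
\[
\Index_{\hat{\tau}}(T):=(\hat\tau\otimes\Tr)(\Ker T)-(\hat\tau\otimes\Tr)(\Ker T^*),
\]
one has $\hat{\tau}_*(\Index(T))=\Index_{\hat{\tau}}(T)$, which when combined with the above yields the odd-dimensional formula.

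For even $n$, I would argue identically using Theorem \ref{evenKaspprod}. Writing $T_e:=\boldsymbol\pi_B(e)\slashed{\boldsymbol R}_+\boldsymbol\pi_B(e)$ viewed as an operator from $\boldsymbol\pi_B(e)\boldsymbol B_+^{\oplus r}$ to $\boldsymbol\pi_B(e)\boldsymbol B_-^{\oplus r}$, applying $\hat{\tau}_*$ to the equality $([e]-[f])\otimes_A[\boldsymbol\pi_B,\boldsymbol B_B,\slashed{\boldsymbol R}]=\Index(T_e)-\Index(T_f)$ and using the same identification gives the even-dimensional statement. The only subtlety is ensuring that $\Ker T$ and $\Ker T^*$ have finite $\hat{\tau}\otimes\Tr$-value; this is automatic because relative Fredholm-ness forces these projections to be finite rank over $B\otimes\Ki$ (hence in the $(\hat{\tau}\otimes\Tr)$-compact ideal), a fact which is part of the standard correspondence between $K_0(B)$-valued and semifinite Fredholm indices as developed in \cite{CGRS1}.

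The main (and essentially the only) point that is not purely formal is thus verifying the agreement $\hat{\tau}_*\circ\Index=\Index_{\hat{\tau}}$ on the class of operators at hand. This is not really an obstacle so much as a check: it uses that $\hat{\tau}_*:K_0(B)\to\R$ factors through the trace on the compact ideal of the Hilbert $B$-module picture (the abstract index lives there precisely because the operators are invertible modulo $B\otimes\Ki$), and under the canonical identification of finitely generated projective $B\otimes\Ki$-modules with $\hat{\tau}$-finite projections, the $K$-theoretic trace coincides with $\hat{\tau}\otimes\Tr$ evaluated on representing projections.
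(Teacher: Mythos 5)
Your proposal is correct and follows essentially the same route the paper takes: the corollary has no separate proof in the paper because it is an immediate consequence of the definition of $\bra\cdot,\mathbf{t}_\alpha\ket$, of Theorems \ref{oddKaspprod} and \ref{evenKaspprod}, and of the identification $\hat{\tau}_*\circ\Index=\Index_{\hat{\tau}}$ discussed in the paragraph preceding the definition of the index pairing, which is exactly the chain of reductions you spell out. One small caution worth recording: the step where you say that relative Fredholmness forces $\Ker(T)$ and $\Ker(T^*)$ into the $(\hat{\tau}\otimes\Tr)$-compact ideal is a little quicker than it should be, since membership in $B\otimes\Ki$ alone does not automatically give $\hat\tau$-finiteness when $\hat\tau$ is unbounded; the relevant point (which the paper also leaves implicit) is that $\hat{\tau}_*:K_0(B)\to\R$ is defined on $K$-theory via the trace's extension to finitely generated projective modules, and that the smoothness hypothesis $u\in\Un_\infty(\Ai^\sim)$ (as opposed to $A^\sim$) is exactly what guarantees the kernel projections land in the $\hat{\tau}$-trace ideal so that the $K$-theoretic and semifinite indices agree numerically.
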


\subsection{Local formula}\label{numer}
In this section we will prove a formula in the spirit of \cite{CGPRS, Le, PR} for the $\hat{\tau}$-index of Toeplitz operators $T_u=P\pi_\alpha(u)P$. We adopt the powerful approach to the case $n=1$ given in the recent paper \cite{CGPRS}. In particular, we will need the local index formula for nonunital semifinite spectral triples described in \cite[\S3]{CGRS1}.

We want to apply the general version of the local index formula to compute the $\hat{\tau}$-index. For that, we first of all need to find a nonzero $*$-algebra $\Ci\subset A$ which gives a smoothly summable $(\Ni,\hat{\tau})$-semifinite spectral triple. Recall that for smooth summability \cite[\S3]{CGRS1} we need both a suitable smoothness property of elements $a$ in $\Ci$ with respect $\slashed{D}$, as well as a $(\hat{\tau},\slashed{D})$-integrability condition on $\pi_\alpha(a)$. 

As expected, in our setting the smoothness with respect to $\slashed{D}$ is tightly related to the smoothness with respect to the $\R^n$-action $\alpha$. In fact, we shall obtain as in \cite[Prop. 3.12]{CGPRS} that if an element $a\in A$ is smooth for the generator $\delta$ of $\alpha$ then $\pi_\alpha(a)$ is smooth for the derivation $[|\slashed{D}|,\cdot]$. Hence the $\alpha$-smooth subalgebra $\Ai\subset A$ can be used to define a smoothly summable spectral triple $(\Ai,\Hi,\slashed{D})$. We will not be able to use all of $\Ai$ since we also need an integrability condition, but we will still be able to find a dense $*$-subalgebra $\Ci\subset\Ai$ of $A$ such that the inclusion $\Ci\hookrightarrow A$ induces an isomorphism on $K$-theory. 

First we shall discuss how integrability properties will be affected by the choice of Hilbert space. Remember that $A$ is acting on a Hilbert space $\GH$ and that
$$
\pi_\alpha:A\to L^2(\R^n,\GH)
$$
is defined in terms of $\GH$. The dual trace $\hat{\tau}$ on $\Ni$ can be alternatively defined \cite[Def. 3.1]{H1}, \cite[Def. X.1.6]{Ta2} in terms of the Hilbert algebra
\begin{equation}\label{Hilbalgofdual}
\GA_\tau:=L^2(\R^n,\GH_\tau)\cap L^1(\R^n,\Dom(\tau)),
\end{equation}
where $\GH_\tau$ is the GNS space of $\tau$. It is therefore natural to want $\pi_\alpha$ to be a representation on $L^2(\R^n,\GH_\tau)$, and this was the approach in \cite{CGPRS}. The action $\alpha$ is then required to preserve the trace, or else it will not have a unitary implementation. However, the dual trace can also be described (see Definition \ref{dualtrace}) as the composition of $\tau$, $\pi^{-1}_\alpha$ and the operator-valued weight $E$ in \eqref{opvaluedweight} and as we shall see, we do not need $\GH$ to be $\GH_\tau$. (Again, we \emph{do} assume that $\alpha$ preserves $\tau$ in this work but we aim for some flexibility in the choice of $\GH$ that could be useful in the future.) The reason for this is the isomorphism $\Ni\cong\hat{\pi}_\alpha(\GA_\tau)''$ \cite[Lemma X.1.15]{Ta2}, where $\GA_\tau$ is the left Hilbert algebra \eqref{Hilbalgofdual} which completely defines $\hat{\tau}$.

Again we use $\pi_\alpha(a)$ to denote $1_N\otimes\pi_\alpha(a)$ for $a\in A$ and we write $\hat{\tau}$ for $\Tr\otimes\hat{\tau}$ where $\Tr$ is the matrix trace on $\Mn_N(\C)$.

Let $\delta_\tau$ denote the restriction of the generators $\delta=(\delta_1,\dots,\delta_n)$ to $\Dom(\tau)$ and let $\Dom(\delta_\tau)\subset\Dom(\tau)$ denote the domain of $\delta_\tau$.
\begin{Lemma}
The triple $(\Dom(\delta_\tau),\Hi,\slashed{D})$ is a \textbf{$(\Ni,\hat{\tau})$-semifinite spectral triple} over $A$, i.e. for all $a\in\Dom(\delta_\tau)$ it holds that \cite[Def. 2.1]{CGRS1}
\begin{enumerate}[(i)]
\item{the operator $\pi_\alpha(a)$ preserves the domain of $\slashed{D}$ (implying that the commutator $[\slashed{D},\pi_\alpha(a)]$ is densely defined), $[\slashed{D},\pi_\alpha(a)]$ extends to a bounded operator on $\Hi$, and}
\item{$\pi_\alpha(a)(\bone+\slashed{D}^2)^{-1/2}$ belongs to the ideal $\Ki(\Ni,\hat{\tau})$ of $\hat{\tau}$-compact operators.}
\end{enumerate}
\end{Lemma}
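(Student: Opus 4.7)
My plan is to handle the two conditions separately. For (i), I would simply invoke Proposition \ref{easycommutator}: any $a \in \Dom(\delta_\tau)$ lies in $\bigcap_k \Dom(\delta_k)$, so the computation there shows that $\pi_\alpha(a)$ preserves $\Dom(D_k)$ for every $k$, hence $\Dom(\slashed{D})$, and yields
\begin{equation*}
[\slashed{D},\pi_\alpha(a)] = \frac{1}{2\pi i}\sum^n_{k=1}\gamma^k \pi_\alpha(\delta_k(a)),
\end{equation*}
which extends to a bounded operator of norm at most $(2\pi)^{-1}\sum_k\|\delta_k(a)\|$, since the $\gamma^k$ are unitary on $\C^N$ and each $\delta_k(a) \in A$.

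For (ii), my first move is to note that by the Clifford relations $\slashed{D}^2 = |D|^2 \otimes 1_N$, so $(\bone+\slashed{D}^2)^{-1/2} = (\bone+|D|^2)^{-1/2}\otimes 1_N$ and the Clifford tensor factor is irrelevant to $\hat{\tau}$-compactness; I may therefore work with the scalar resolvent on $L^2(\R^n,\GH)$. The central tactic will be a cutoff-plus-norm-approximation: pick a smooth cutoff $\chi_R \in C_c^\infty([0,\infty))$ with $\chi_R \equiv 1$ on $[0,R]$ and set $f_R(x) := (1+x^2)^{-1/2}\chi_R(x)$. By functional calculus $f_R(|D|) \to (\bone+|D|^2)^{-1/2}$ in operator norm as $R \to \infty$, and since $\Ki(\Ni,\hat{\tau})$ is norm-closed it suffices to show $\pi_\alpha(a) f_R(|D|) \in \Ki(\Ni,\hat{\tau})$ for each fixed $R$. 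Extending $f_R$ radially gives a Schwartz function on $\R^n$, so its Fourier transform $\hat{f}_R$ lies in $L^1\cap L^2(\R^n)$, and joint functional calculus in the commuting selfadjoints $D_1,\dots,D_n$ gives
\begin{equation*}
\pi_\alpha(a) f_R(|D|) = \int_{\R^n}\hat{f}_R(t)\,\pi_\alpha(a)\, e^{-2\pi it\cdot D}\,dt = \tilde{\pi}_\alpha(F_R),\qquad F_R(t) := \hat{f}_R(t)\, a.
\end{equation*}
Since $a \in \Dom(\delta_\tau)\subset\Dom(\tau)$ forces $\tau(a^*a) < \infty$, the defining identity \eqref{Cstardualtr} of $\hat{\tau}$ then gives
\begin{equation*}
\hat{\tau}\bigl(\tilde{\pi}_\alpha(F_R)^*\tilde{\pi}_\alpha(F_R)\bigr) = \|\hat{f}_R\|_2^2\,\tau(a^*a) < \infty,
\end{equation*}
so $\pi_\alpha(a) f_R(|D|)$ actually lies in $\Li^2(\Ni,\hat{\tau})\subset\Ki(\Ni,\hat{\tau})$.

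The main subtlety I anticipate is that one \emph{cannot} realize $(\bone+|D|^2)^{-1/2}$ itself as $\tilde{\pi}_\alpha$ of an $L^2$ function in dimension $n\geq 2$: the scalar resolvent is not Hilbert--Schmidt in high dimension (the heat-kernel representation $(\bone+|D|^2)^{-1/2} = \pi^{-1/2}\int_0^\infty \lambda^{-1/2}e^{-\lambda}e^{-\lambda|D|^2}d\lambda$ diverges at $\lambda=0$ once $n\geq 2$). The cutoff-then-norm-limit step is precisely what makes the argument dimension-independent, and it is essential that the approximation take place in operator norm so as to preserve membership in the ideal of $\hat{\tau}$-compact operators.
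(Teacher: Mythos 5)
Your proof is correct and follows essentially the same strategy as the paper: reduce the $\hat{\tau}$-compactness of $\pi_\alpha(a)(\bone+\slashed{D}^2)^{-1/2}$ to the defining Hilbert--Schmidt formula for $\hat{\tau}$ via the symbol-to-operator correspondence in the crossed product. The paper's one-line argument simply asserts that $\pi_\alpha(a)(\bone+\slashed{D}^2)^{-1/2}=\tilde{\pi}_\alpha(f)$ with $f(t)=a(1+|t|^2)^{-1/2}\in\Dom(\delta_\tau)\otimes C_0(\R^n)$ and stops there. This is loose on two counts, both of which your argument addresses: first, $t\mapsto(1+|t|^2)^{-1/2}$ is not in $L^1(\R^n)$ for any $n\ge 1$ (nor in $L^2$ for $n\ge 2$), so $\tilde{\pi}_\alpha(f)$ is not literally defined by the integral formula, and one cannot directly plug into \eqref{Cstardualtr}; second, the symbol-to-operator map $\tilde{\pi}_\alpha$ carries a Fourier transform, which the paper absorbs silently (the trace formulas later come out right by Plancherel). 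Your cutoff $f_R$, the identification $\pi_\alpha(a)f_R(|D|)=\tilde{\pi}_\alpha(\hat{f}_R\cdot a)$ with a genuinely $L^1\cap L^2$ symbol, and the norm limit into the norm-closed ideal $\Ki(\Ni,\hat{\tau})$ are precisely the ingredients needed to make the paper's assertion rigorous; your closing remark about why the uncut resolvent itself fails to be Hilbert--Schmidt in dimension $n\ge 2$ correctly explains why the approximation step cannot be dispensed with.

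(One cosmetic slip: with the paper's tensor ordering $\Hi=\C^N\otimes L^2(\R^n,\GH)$ the Clifford square is $\slashed{D}^2=1_N\otimes|D|^2$ rather than $|D|^2\otimes 1_N$; this does not affect the argument.)
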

\begin{proof} Property (i) is obvious since $\Dom(\delta_\tau)\subset\Ai$. For (ii) we note that $\pi_\alpha(a)(\bone+\slashed{D}^2)^{-1/2}=\tilde{\pi}_\alpha(f)$ where the function $\R^n\ni t\to f(t):=a(1+|t|^2)^{-1/2}$ belongs to $\Dom(\delta_\tau)\otimes C_0(\R^n)$.
\end{proof}

Let $A''$ be the weak closure of $A$ in its original representation. Then $\tau$ extends to a normal trace $\bar{\tau}$ on $A''$ with the same GNS space as $\tau$. The following lemma is the counterpart of \cite[Lemma 3.4]{CGPRS}.

\begin{Lemma}\label{traceformulalemma} 
Let $h\in L^\infty(\R^n)\cap L^2(\R^n)$ and let $a\in A''$ be such that $a^*a$ is in $\Dom(\bar{\tau})$. If we define
$$
x(t):=ah(t),
$$ 
then $\hat{\pi}_\alpha(x)\in\Ni$ is $\hat{\tau}$-Hilbert-Schmidt and
$$
\hat{\tau}(\hat{\pi}_\alpha(x)^*\hat{\pi}_\alpha(x))=\tau(a^*a)\int_{\R^n}|h(t)|^2\, dt.
$$
\end{Lemma}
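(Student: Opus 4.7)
The formal computation is immediate from \eqref{Cstardualtr} and \eqref{Cstarinnerproduct}: for $x(t)=ah(t)$ the pointwise product gives
\[
\bra x|x\ket_{A''}=\int_{\R^n}h(t)^*\,a^*a\,h(t)\,dt=a^*a\int_{\R^n}|h(t)|^2\,dt,
\]
whence formally applying $\bar\tau$ yields the right-hand side. The substance of the lemma is therefore to upgrade this to a genuine identity for $\hat\tau(\hat\pi_\alpha(x)^*\hat\pi_\alpha(x))$ and, in particular, to establish the finiteness that forces $\hat\pi_\alpha(x)$ to be $\hat\tau$-Hilbert-Schmidt in the first place.

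My plan is an approximation argument patterned on the one-dimensional case \cite[Lemma 3.4]{CGPRS}. The key structural input is the Hilbert-algebra description of $\hat\tau$ already invoked in the paragraph above \eqref{Hilbalgofdual}: by \cite[Lemma X.1.15]{Ta2} the map $\hat\pi_\alpha$ extends isometrically from $\GA_\tau$ to $L^2(\R^n,\GH_\tau)$, with image dense in the GNS Hilbert space $\GH_{\hat\tau}$ of $\hat\tau$, and \eqref{Cstardualtr} (with $\tau$ replaced by its normal extension $\bar\tau$) holds verbatim for every $f\in\GA_\tau$. First I would pick $h_j\in C_c(\R^n)$ with $h_j\to h$ in $L^2(\R^n)$ and $\|h_j\|_\infty\le\|h\|_\infty$, and set $x_j(t):=ah_j(t)$. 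Since $a\in\GH_\tau$ (because $a^*a\in\Dom(\bar\tau)$) and each $h_j$ is continuous with compact support, $x_j$ lies in $L^2(\R^n,\GH_\tau)\cap L^1(\R^n,\Dom(\bar\tau))=\GA_\tau$, and therefore
\[
\hat\tau\bigl(\hat\pi_\alpha(x_j)^*\hat\pi_\alpha(x_j)\bigr)=\bar\tau(a^*a)\int_{\R^n}|h_j(t)|^2\,dt.
\]

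Finally I would pass to the limit $j\to\infty$. The right-hand side tends to $\tau(a^*a)\|h\|_{L^2(\R^n)}^2$ by the $L^2$-convergence of $h_j$. On the left, $x_j\to x$ in $L^2(\R^n,\GH_\tau)$, so the isometric embedding $L^2(\R^n,\GH_\tau)\hookrightarrow\GH_{\hat\tau}$ implemented by $\hat\pi_\alpha$ forces $\hat\pi_\alpha(x_j)\to\hat\pi_\alpha(x)$ in the $\hat\tau$-Hilbert-Schmidt norm; the trace identity then persists in the limit, yielding simultaneously the Hilbert-Schmidt conclusion and the stated formula. The one nontrivial ingredient is the Hilbert-algebra identification $\hat\pi_\alpha:L^2(\R^n,\GH_\tau)\hookrightarrow\GH_{\hat\tau}$, without which the last limit is uncontrolled; once this structural fact is granted, the extension from $n=1$ to general $n$ is formal because $\R^n$ enters only through the scalar $\int_{\R^n}|h(t)|^2\,dt$.
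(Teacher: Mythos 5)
Your proof takes a genuinely different route from the paper's. The paper computes $\hat{\tau}(\hat{\pi}_\alpha(x)^*\hat{\pi}_\alpha(x))$ directly from the operator-valued-weight formula $\hat{\tau}=\bar{\tau}\circ\pi_\alpha^{-1}\circ E$ of Definition \ref{dualtrace}: writing $\hat{\pi}_\alpha(x)^*\hat{\pi}_\alpha(x)$ as a double integral over $\R^n\times\R^n$, applying $E=\int\hat{\alpha}_p(\cdot)\,dp$ to produce a Fourier delta function that collapses the double integral, and then applying $\bar{\tau}\circ\pi_\alpha^{-1}$ and invoking $\alpha$-invariance of $\tau$. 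You instead go through the Hilbert-algebra/GNS description of $\hat{\tau}$ (the isometry $\hat{\pi}_\alpha:L^2(\R^n,\GH_\tau)\hookrightarrow\GH_{\hat{\tau}}$ of \cite[Lemma X.1.15]{Ta2}, encoded in \eqref{Cstardualtr}). Both are standard realizations of the Haagerup dual weight; the paper's computation is more self-contained, while yours absorbs the Fourier bookkeeping into the abstract isometry and makes the role of $\|h\|_{L^2}^2$ transparent.

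There is, however, one concrete gap in your intermediate step. You assert that $x_j(t)=ah_j(t)$ lies in $\GA_\tau=L^2(\R^n,\GH_\tau)\cap L^1(\R^n,\Dom(\bar{\tau}))$ "since $a\in\GH_\tau$", but the hypothesis $a^*a\in\Dom(\bar{\tau})$ only places $a$ in the Hilbert--Schmidt ideal $\Li^2(A'',\bar{\tau})=\GH_\tau$, \emph{not} in $\Dom(\bar{\tau})=\Li^1(A'',\bar{\tau})$. Hence $x_j(t)=ah_j(t)$ need not lie in $\Dom(\bar{\tau})$, and the $L^1(\R^n,\Dom(\bar{\tau}))$ membership is unjustified, so you cannot directly apply \eqref{Cstardualtr} to $x_j$. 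The fix is essentially free and in fact makes the approximation step superfluous: once you appeal (as you do) to the fact that $\hat{\pi}_\alpha$ extends to an isometry $L^2(\R^n,\GH_\tau)\hookrightarrow\GH_{\hat{\tau}}\cong\Li^2(\Ni,\hat{\tau})$, you can apply that isometry directly to $x\in L^2(\R^n,\GH_\tau)$ and read off $\hat{\tau}(\hat{\pi}_\alpha(x)^*\hat{\pi}_\alpha(x))=\|x\|_{L^2(\R^n,\GH_\tau)}^2=\bar{\tau}(a^*a)\int_{\R^n}|h(t)|^2\,dt$, without ever invoking membership of $x$ or $x_j$ in the Hilbert algebra $\GA_\tau$ itself.
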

\begin{proof} We write $\hat{\pi}_\alpha(x)=\int \pi_\alpha(a)h(s)e^{-2\pi is\cdot D}\, ds$ so that
$$
\hat{\alpha}_p(\hat{\pi}_\alpha(x))=\int_{\R^n}\pi_\alpha(a)h(s)e^{-2\pi ip\cdot s}e^{-is\cdot D}\, ds,\qquad\forall p\in\R^n.
$$
Since $\hat{\tau}=\bar{\tau}\circ\pi^{-1}_\alpha\circ E$, the assumptions on $x$ give
\begin{align*}
\hat{\tau}(\hat{\pi}_\alpha(x)^*\hat{\pi}_\alpha(x))&=\bar{\tau}\circ\pi^{-1}_\alpha\Big(\int_{\R^n}\hat {\alpha}_p(\hat{\pi}_\alpha(x^*x))\, dp\Big)
\\&=\bar{\tau}\circ\pi^{-1}_\alpha\Big(\iiint_{\R^n\times\R^n\times\R^n}\pi_\alpha(a^*a)\overline{h(t)}h(s+t)e^{-2\pi ip\cdot s}e^{-2\pi is\cdot D}\, dt\, ds\, dp\Big)
\\&=\bar{\tau}(a^*a)\int_{\R^n}|h(t)|^2\, dt.
\end{align*}
\end{proof}

\begin{cor}\label{varphiproptotau}
Let $s>n$ and define a weight $\varphi_s$ on $\Ni$ by setting
$$
\varphi_s(T):=\hat{\tau}((1+\slashed{D}^2)^{-s/4}T(1+\slashed{D}^2)^{-s/4})
$$
for all $T\in\Ni_+$. Then the restriction of $\varphi_s$ to $\Mi:=\pi_\alpha(A)''$, viewed as a subalgebra of $\Ni$, is proportional to $\bar{\tau}\circ\pi^{-1}_\alpha$.
\end{cor}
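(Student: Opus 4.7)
The plan is to evaluate $\varphi_s$ on positive elements of the form $\pi_\alpha(a^*a)$ with $a\in A''$ and $a^*a\in\Dom(\bar{\tau})$, identify the proportionality constant explicitly, and then extend by normality to all of $\Mi_+$. First I would use the Clifford relations $\gamma^j\gamma^k+\gamma^k\gamma^j=2\delta^{jk}$ and commutativity of the $D_k$'s to get $\slashed{D}^2=|D|^2\cdot 1_N$, so that $(1+\slashed{D}^2)^{-s/4}$ is scalar on the spinor factor. With the convention $\hat{\tau}=\Tr\otimes\hat{\tau}$, the matrix trace contributes only a factor $N=\dim\C^N$:
$$
\varphi_s(\pi_\alpha(a^*a)) = N\cdot\hat{\tau}\bigl((1+|D|^2)^{-s/4}\pi_\alpha(a^*a)(1+|D|^2)^{-s/4}\bigr),
$$
and the task becomes identifying this with $Nc_{s,n}\bar{\tau}(a^*a)$ for some constant $c_{s,n}$.

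The core computation is that the operator-valued weight $E$ of Definition \ref{dualtrace} satisfies
$$
E\bigl((1+|D|^2)^{-s/2}\bigr)=c_{s,n}\cdot\bone,\qquad c_{s,n}:=\int_{\R^n}(1+|\xi|^2)^{-s/2}\,d\xi,
$$
with $c_{s,n}<\infty$ precisely when $s>n$. The defining relation $\hat{\alpha}_p(e^{\pm 2\pi it\cdot D})=e^{\mp 2\pi ip\cdot t}e^{\pm 2\pi it\cdot D}$ says that $\hat{\alpha}_p$ acts on functions of $D$ by spectral translation, $\hat{\alpha}_p(\phi(D))=\phi(D-p)$; one checks this on exponentials and extends by functional calculus. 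In the joint spectral representation of $D=(D_1,\dots,D_n)$, the operator $\int_{\R^n}(1+|D-p|^2)^{-s/2}\,dp$ becomes multiplication by the scalar $\int_{\R^n}(1+|\xi-p|^2)^{-s/2}\,dp$, which by translation invariance of Lebesgue measure equals $c_{s,n}$ independently of $\xi$.

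Since $\pi_\alpha(A)$ is the fixed-point algebra of $\hat{\alpha}$, the weight $E$ is a $\pi_\alpha(A)$-bimodule map, so
$$
E\bigl(\pi_\alpha(a)(1+|D|^2)^{-s/2}\pi_\alpha(a)^*\bigr) = c_{s,n}\,\pi_\alpha(aa^*).
$$
Combined with the formula $\hat{\tau}=\bar{\tau}\circ\pi_\alpha^{-1}\circ E$ and traciality of $\bar{\tau}$ this gives
$$
\hat{\tau}\bigl(\pi_\alpha(a)(1+|D|^2)^{-s/2}\pi_\alpha(a)^*\bigr) = c_{s,n}\,\bar{\tau}(a^*a) < \infty,
$$
showing in particular that $\pi_\alpha(a)(1+|D|^2)^{-s/4}$ is $\hat{\tau}$-Hilbert--Schmidt. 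Because $\tau$ is $\alpha$-invariant, $\hat{\tau}$ is a trace (Remark \ref{modularautdualtrace}), so cyclicity on Hilbert--Schmidt operators swaps the factors and yields $\varphi_s(\pi_\alpha(a^*a)) = Nc_{s,n}\cdot(\bar{\tau}\circ\pi_\alpha^{-1})(\pi_\alpha(a^*a))$. Normality of both $\varphi_s|_{\Mi}$ and $\bar{\tau}\circ\pi_\alpha^{-1}$, together with semifiniteness of $\bar{\tau}$ on $A''$, then extends this identity to all of $\Mi_+$ with constant $Nc_{s,n}$.

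The main technical obstacle is the rigorous justification of $E((1+|D|^2)^{-s/2})=c_{s,n}\bone$: for $n<s\le 2n$ the function $p\mapsto\hat{\alpha}_p((1+|D|^2)^{-s/2})$ fails to be norm-integrable, so $E$ has to be treated as an unbounded normal operator-valued weight rather than a naively convergent Bochner integral. The cleanest way around this is to verify the identity weakly against vectors in the spectral domain of $D$, where functional calculus reduces everything to the scalar integral $\int_{\R^n}(1+|\xi-p|^2)^{-s/2}\,dp=c_{s,n}$, and then invoke the uniqueness properties of normal operator-valued weights extending a prescribed value on a dense set.
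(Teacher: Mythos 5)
Your proof is correct and takes essentially the same route as the paper: both reduce to the dual-weight formula $\hat{\tau}=\bar{\tau}\circ\pi_\alpha^{-1}\circ E$ together with translation invariance of Lebesgue measure, the paper by invoking Lemma \ref{traceformulalemma} and you by deriving the same content in-line via $E\bigl((1+|D|^2)^{-s/2}\bigr)=c_{s,n}\bone$ and the $\pi_\alpha(A)$-bimodule property of the operator-valued weight. Your constant $N\int(1+|\xi|^2)^{-s/2}\,d\xi$ is in fact the right one; the paper's final line writes $\int|h_s(t)|^2\,dt$ where, feeding the appropriate $h$ (with $\hat{h}(\xi)=(1+|\xi|^2)^{-s/4}$) through Lemma \ref{traceformulalemma}, it should read $\|h_{s/2}\|^2_2$ — an exponent slip which does not affect the stated conclusion of mere proportionality.
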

\begin{proof} From the Clifford relations we get
$$
\slashed{D}^2=\sum^n_{k=1}\bone\otimes D_k^2,
$$
and so if $h_s(t):=(1+|t|^2)^{-s/4}$ then by Lemma \ref{traceformulalemma} we have for each positive $a$ in the domain of $\tau$ that
\begin{align*}
\varphi_s(\pi_\alpha(a))&=\hat{\tau}((1+\slashed{D}^2)^{-s/4}\pi_\alpha(a)(1+\slashed{D}^2)^{-s/4})
\\&=\hat{\tau}(h_s(\slashed{D})\pi_\alpha(a)h_s(\slashed{D}))
\\&=\Tr(\bone_N)\tau(a)\int_{\R^n}|h_s(t)|^2\, dt.
\end{align*}
It follows that more generally that $\varphi_s=\|h_s\|^2_2\bar{\tau}\circ\pi^{-1}_\alpha$ holds on $\pi_\alpha(\Dom(\bar{\tau})_+)\subset\Mi_+$. That $\varphi_s(\pi_\alpha(a))=+\infty$ whenever $\bar{\tau}(a)=+\infty$ can be seen as in Corollary 3.5 of \cite{CGPRS}.
\end{proof}
We now need the notion of ``smooth summability". Recall that the \textbf{half-domain} of a weight $\varphi:\Ni_+\to[0,+\infty]$ is the vector space
$$
\Dom^{1/2}(\varphi):=\{T\in \Ni|\ \varphi(T^*T)<+\infty\}.
$$
\begin{dfn}[{\cite[\S 1.1]{CGRS1}}]
Let $p\geq 1$ be a real number. The algebra of \textbf{$(\slashed{D},\hat{\tau},p)$-square-integrable} elements in $\Ni$ is the one defined by
$$
\Bi_2(\slashed{D},\hat{\tau},p):=\bigcap_{s>p}\Dom^{1/2}(\varphi_s)\cap \Dom^{1/2}(\varphi_s)^*.
$$
For each $p\geq 1$, the space $\Bi_2(\slashed{D},\hat{\tau},p)$ is a Fr\'{e}chet $*$-algebra \cite[Prop. 1.6]{CGRS1}. The algebra of \textbf{$(\slashed{D},\hat{\tau},p)$-integrable} elements in $\Ni$ is the subalgebra
$$
\Bi_1(\slashed{D},\hat{\tau},p)\subset\Bi_2(\slashed{D},\hat{\tau},p)
$$
defined as the closure of the image of $\Bi_2(\slashed{D},\hat{\tau},p)\otimes\Bi_2(\slashed{D},\hat{\tau},p)$ (completed projective tensor product) under the multiplication map. 
\end{dfn}
\begin{dfn}\label{quantumsmooth} 
Consider the unbounded operators $L$ on $\Ni$ given by
$$
L(T):=(1+\slashed{D}^2)^{-1/2}[\slashed{D}^2,T],\qquad\forall T\in\Ni.
$$
A $(\Ni,\hat{\tau})$-semifinite spectral triple $(\Ai,\Hi,\slashed{D})$ is \textbf{smoothly summable} if there is a $p\geq 1$ such that $\pi(a)$ and $[\slashed{D},\pi(a)]$ belong to the algebra \cite[Lemma 1.29]{CGRS1}
$$
\Bi^\infty_1(\slashed{D},\hat{\tau},p):=\{T\in \Bi_1(\slashed{D},\hat{\tau},p)|\ L^k(T)\in\Bi_1(\slashed{D},\hat{\tau},p)\text{ for all }k\in\N\}.
$$
for all $a\in\Ai$. 
\end{dfn}

\begin{Lemma}\label{Cissmoothsumm}
Let $\Ci$ be the $*$-subalgebra of $\Dom(\tau)$ generated by the set
\begin{equation}\label{Cisthisagain}
\{a=bc\in\Ai|\ \delta^k(b),\, \delta^k(c)\in\Dom(\delta_\tau)\text{ for all }k\in\N_0\}.
\end{equation}
Then $(\Ci,\Hi,\slashed{D})$ is a smoothly summable $(\Ni,\hat{\tau})$-semifinite spectral triple. That is, there is a $p\geq 1$ such that $\pi_\alpha(a)$ and $[\slashed{D},\pi_\alpha(a)]$ belong to $\Bi^\infty_1(\slashed{D},\hat{\tau},p)$ for all $a\in\Ci$. 
\end{Lemma}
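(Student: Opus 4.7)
The plan is to take $p = n$ as the spectral dimension and to verify, for every $a \in \Ci$, that both $\pi_\alpha(a)$ and $[\slashed{D}, \pi_\alpha(a)]$ lie in $\Bi_1(\slashed{D}, \hat{\tau}, n)$ and remain there after any number of applications of the operator $L$. Since $\Bi_1$ is linear and $\Ci$ is linearly spanned by products of generators $a = bc$ with all $\delta$-derivatives of $b,c$ in $\Dom(\delta_\tau)$, it is enough to check the conditions on such generators.

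For the base case $\pi_\alpha(a) \in \Bi_1(\slashed{D}, \hat{\tau}, n)$, I would first establish that $\pi_\alpha(b), \pi_\alpha(c) \in \Bi_2(\slashed{D}, \hat{\tau}, n)$. Rewriting $\pi_\alpha(b)(1+\slashed{D}^2)^{-s/4}$ as $\hat{\pi}_\alpha(x)$ with $x(t) = b(1+|t|^2)^{-s/4}$ and invoking Lemma \ref{traceformulalemma} yields
\[
\varphi_s(\pi_\alpha(b)^* \pi_\alpha(b)) = \Tr(\bone_N)\, \tau(b^*b) \int_{\R^n}(1+|t|^2)^{-s/2}\, dt,
\]
which is finite for $s > n$ because $b \in \Dom(\delta_\tau)$ makes $b^*b$ trace-class and the integral converges in precisely this range; the computation for $\varphi_s(\pi_\alpha(b)\pi_\alpha(b)^*)$ and for $c$ is identical. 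Consequently $\pi_\alpha(a) = \pi_\alpha(b)\pi_\alpha(c)$ belongs to $\Bi_1(\slashed{D},\hat{\tau},n)$ by the definition of $\Bi_1$ as the closure of the image of $\Bi_2 \otimes \Bi_2$ under multiplication.

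For the commutator, Proposition \ref{easycommutator} gives $[\slashed{D}, \pi_\alpha(a)] = (2\pi i)^{-1} \sum_k \gamma^k \pi_\alpha(\delta_k(a))$, and the Leibniz rule $\delta_k(a) = \delta_k(b)c + b\,\delta_k(c)$ again expresses this as a sum of products of two elements whose respective factors lie in $\Dom(\delta_\tau)$, so the same argument places $[\slashed{D},\pi_\alpha(a)]$ in $\Bi_1(\slashed{D},\hat{\tau},n)$. For the smoothness condition I would show by induction on $k$ that
\[
L^k(\pi_\alpha(a)) = \sum_{|\mu|+|\nu| \leq 2k} R_{\mu,\nu}(\slashed{D})\,\pi_\alpha(\delta^\mu(b))\,\pi_\alpha(\delta^\nu(c)),
\]
where each $R_{\mu,\nu}(\slashed{D})$ is a bounded operator assembled from Clifford matrices, powers of $\slashed{D}$, and resolvent factors $(1+\slashed{D}^2)^{-1/2}$. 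The induction step uses $[\slashed{D}^2, \pi_\alpha(a)] = \slashed{D}[\slashed{D},\pi_\alpha(a)] + [\slashed{D},\pi_\alpha(a)]\slashed{D}$ together with Proposition \ref{easycommutator}. Because $\Ci$ is explicitly closed under all $\delta^\mu$ applied to its generating factors, each summand is of the form already analyzed in the base case multiplied on the left by a bounded multiplier, hence lies in $\Bi_1(\slashed{D},\hat{\tau},n)$; treating $L^k([\slashed{D},\pi_\alpha(a)])$ requires only one additional application of Proposition \ref{easycommutator}.

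The main obstacle is the inductive bookkeeping in the last step: one must verify that every prefactor $R_{\mu,\nu}(\slashed{D})$ is genuinely bounded, which amounts to checking that the resolvent powers generated by $L$ are always matched by an appropriate number of $\slashed{D}$ factors produced when iterated commutators are rewritten through Proposition \ref{easycommutator}. The necessary functional-calculus estimates are standard, e.g.\ $\|\slashed{D}(1+\slashed{D}^2)^{-1/2}\| \leq 1$ and $\|(1+\slashed{D}^2)^{-1/2}\| \leq 1$, and the overall scheme is the $n$-dimensional analogue of \cite[Prop.\ 3.12]{CGPRS}: notationally heavier but structurally the same.
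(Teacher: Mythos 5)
Your proof is correct and follows essentially the same strategy as the paper's: verify the integrability condition via Lemma~\ref{traceformulalemma} and the definition of $\Bi_1$, and verify the smooth-domain condition by explicitly iterating $L$ on $\pi_\alpha(a)$. There are two small points worth noting. First, your route to $\pi_\alpha(a)\in\Bi_1(\slashed{D},\hat{\tau},n)$ is a bit more elementary than the paper's: you place each factor $\pi_\alpha(b),\pi_\alpha(c)$ directly in $\Bi_2$ and use the definition of $\Bi_1$ as $\overline{\Bi_2\cdot\Bi_2}$, whereas the paper first establishes (via Corollary~\ref{varphiproptotau} and \cite[Prop.~1.19]{CGRS1}) the clean identification $\Bi_1(\slashed{D},\hat{\tau},n)\cap\pi_\alpha(A)=\pi_\alpha(\Dom(\tau))$ and then observes $\Ci\subset\Dom(\tau)$ by construction. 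Both work; the paper's abstraction buys a characterization that also handles the $L^k$ terms uniformly, while yours makes the specific factorized structure of $\Ci$ do the work. (A minor technical slip: $x(t)=b(1+|t|^2)^{-s/4}$ does not give $\hat{\pi}_\alpha(x)=\pi_\alpha(b)(1+\slashed{D}^2)^{-s/4}$ since $\hat{\pi}_\alpha$ produces $\pi_\alpha(b)\hat h(D)$; you want $\hat h(\lambda)=(1+|\lambda|^2)^{-s/4}$. By Plancherel the final numerical formula is unchanged, so the conclusion survives.)

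Second, you flag the inductive bookkeeping for $L^k$ as the main obstacle. The paper circumvents this by a single structural observation which you should add: since $[\slashed{D}^2,f(D)]=0$ for any bounded Borel function $f$ of $D$ (and Clifford matrices commute with $\slashed{D}^2$), one has $L^j(f(D)\,T\,g(D))=f(D)\,L^j(T)\,g(D)$. Combined with the explicit computation showing that $L(\pi_\alpha(a))$ is a finite sum of terms of the form $R(\slashed{D})\pi_\alpha(\delta^\rho(a))$ with $|\rho|\le 2$ and $R(\slashed{D})$ bounded, the higher iterates $L^k(\pi_\alpha(a))$ then follow immediately without any fresh bookkeeping: each application of $L$ passes through the bounded prefactors and only raises the order of $\delta$-derivatives by at most $2$. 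That observation dissolves the "main obstacle" you identify and closes the proof.
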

\begin{proof} 
We anticipate that $p=n$ will suffice. As in Definition \ref{quantumsmooth}, let $L$ be the operator on $\Ni$ given by $L(T):=(1+\slashed{D}^2)^{-1/2}[\slashed{D}^2,T]$. We begin by showing that $\pi_\alpha(\Ai)$ is contained in the smooth domain of $L$. 

Write $\slashed{F}:=\slashed{D}(1+\slashed{D}^2)^{-1/2}=(1+\slashed{D}^2)^{-1/2}\slashed{D}$ and $F_k:=D_k(1+\slashed{D}^2)^{-1/2}$ for $k=1,\dots,n$. Using Proposition \ref{easycommutator} and  we see that
\begin{align*}
L(\pi_\alpha(a))&=(1+\slashed{D}^2)^{-1/2}(\slashed{D}[\slashed{D},\pi_\alpha(a)]+[\slashed{D},\pi_\alpha(a)]\slashed{D})
\\&=\frac{1}{2\pi i}\sum^n_{k=1}\slashed{F}\gamma^k\pi_\alpha(\delta_k(a))+\frac{1}{2\pi i}(1+\slashed{D}^2)^{-1/2}\sum^n_{k=1}\gamma^k\pi_\alpha(\delta_k(a))\slashed{D}
\\&=\frac{1}{2\pi i}\sum^n_{k=1}\slashed{F}\gamma^k\pi_\alpha(\delta_k(a))+\frac{1}{2\pi i}(1+\slashed{D}^2)^{-1/2}\sum^n_{k=1}\big([\gamma^k\pi_\alpha(\delta_k(a)),\slashed{D}]+\slashed{D}\gamma^k\pi_\alpha(\delta_k(a))\big)
\\&=\frac{1}{\pi i}\sum^n_{k=1}\slashed{F}\gamma^k\pi_\alpha(\delta_k(a))+\frac{1}{2\pi i}(1+\slashed{D}^2)^{-1/2}\sum^n_{l,k=1}\Big(\frac{1}{2\pi i}\gamma^l\gamma^k\pi_\alpha(\delta_l\delta_k(a))+[\gamma^l,\gamma^k]D_k\pi_\alpha(\delta_k(a))\Big)
\\&=\frac{1}{2\pi i}\sum^n_{k=1}\big(2\slashed{F}\gamma^k+[\gamma^l,\gamma^k]F_k\big)\pi_\alpha(\delta_k(a))+\frac{1}{4\pi^2}(1+\slashed{D}^2)^{-1/2}\sum^n_{l,k=1}\gamma^l\gamma^k\pi_\alpha(\delta_l\delta_k(a))
\end{align*}
whenever $a$ belongs to the domain of $\delta^2$. 

This shows that $\Dom(L)\subset\pi_\alpha(\Dom(\delta^2))$. Since $L^j$ is defined using the derivation $[\slashed{D}^2,\cdot]$, for all $f,g\in L^\infty(\R^n)$ and $T\in\Dom(L^j)$ one has
$$
L^j(f(D)Tg(D))=f(D)L^j(T)g(D).
$$
Note that $L(\pi_\alpha(a))$ is of the form $f(D)Tg(D)$ with $f,g\in L^\infty(\R^n)$ and $T\in\Dom(L)$. Thus, the action of $L^j$ for $j\in\N$ can be deduced by just repeating the above calculation with $\delta_k(a)$ and $\delta_l\delta_k(a)$ instead of $a$, provided that $a$ belongs to the domain of $\delta^{2j}$. Thus, an element $a$ in $\bigcap_{r\in\N}\Dom(\delta^r)=\Ai$ will belong to $\Dom(L^j)$ for all $j\in\N$. 

From Corollary \ref{varphiproptotau} we have 
$$
\pi_\alpha(\Dom^{1/2}(\tau))\subset\Bi_2(\slashed{D},\hat{\tau},n).
$$
From \cite[Prop. 1.19]{CGRS1} we know that, since each $\varphi_s$ is tracial on $\Mi:=\pi_\alpha(A)''$, the space $\Bi_1(\slashed{D},\hat{\tau},n)\cap\Mi$ is equal to the intersection of trace-ideals $\Li^1(\Mi,\varphi_s)=\Dom(\varphi_s)$,
$$
\Bi_1(\slashed{D},\hat{\tau},n)\cap\Mi=\bigcap_{s>n}\Li^1(\Mi,\varphi_s).
$$ 
But $\varphi_s$ is proportional to $\bar{\tau}\circ\pi^{-1}_\alpha$ on $\Mi$ for all $s>n$, so we obtain
$$
\Bi_1(\slashed{D},\hat{\tau},n)\cap\Mi=\pi_\alpha(\Dom(\bar{\tau})).
$$
Finally, on the $C^*$-level this yields
$$
\Bi_1(\slashed{D},\hat{\tau},n)\cap\pi_\alpha(A)=\pi_\alpha(\Dom(\tau)).
$$
In particular, $\pi_\alpha(\Ci)\subset\Bi_1(\slashed{D},\hat{\tau},n)$. From the definition of $\Ci$ and our calculation of $L(\pi_\alpha(\Ci))$ it follows that $L^k(\pi_\alpha(\Ci))\subset\Bi_1(\slashed{D},\hat{\tau},n)$ for all $k\in\N$. Moreover, we have $[\slashed{D},\pi_\alpha(a)]\in\Mn_N(\C)\otimes\pi_\alpha(\Ci)$ for all $a\in\Ci$ (Proposition \ref{easycommutator}). Thus, $L^k([\slashed{D},\pi_\alpha(\Ci)])\subset\Bi_1(\slashed{D},\hat{\tau},n)$ for all $k\in\N$ as well.
That finishes the proof.
\end{proof}
\begin{cor}\label{traceclassprop} 
For all $a\in\Ci$ we have 
$$
\pi_\alpha(a)(1+\slashed{\boldsymbol D}^2)^{-s/2}\in\Li^1(\Ni,\hat{\tau}),\qquad\forall s>n.
$$
\end{cor}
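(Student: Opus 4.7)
The plan is to reduce the statement to the analogous one for $\slashed{D}$ and then exploit the factorization structure of $\Ci$. Because $\slashed{\boldsymbol D}^2=(m^2+\slashed{D}^2)\otimes\bone_{\C^2}$ and the operator inequality $(1+m^2+x^2)^{-s/2}\le(1+x^2)^{-s/2}$ holds, the assertion reduces to showing $\pi_\alpha(a)(1+\slashed{D}^2)^{-s/2}\in\Li^1(\Ni,\hat{\tau})$ for every $a\in\Ci$ and every $s>n$, which is the form I shall prove.

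From the proof of Lemma \ref{Cissmoothsumm} we already have $\pi_\alpha(\Ci)\subset\Bi_1(\slashed{D},\hat{\tau},n)\cap\Mi=\bigcap_{s>n}\Dom(\varphi_s)$, using the tracial property of $\varphi_s$ on $\Mi$ from \cite[Prop. 1.19]{CGRS1}. Unpacking the definition of $\varphi_s$, membership in $\Dom^{1/2}(\varphi_s)\cap\Dom^{1/2}(\varphi_s)^*$ amounts to $\pi_\alpha(a)(1+\slashed{D}^2)^{-s/4}\in\Li^2(\Ni,\hat{\tau})$ and $(1+\slashed{D}^2)^{-s/4}\pi_\alpha(a)\in\Li^2(\Ni,\hat{\tau})$ for $s>n$, and by Lemma \ref{traceformulalemma} the same holds for the generators $b$ and $c$ of $a\in\Ci$. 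I therefore decompose
\begin{equation*}
\pi_\alpha(a)(1+\slashed{D}^2)^{-s/2}=\pi_\alpha(b)(1+\slashed{D}^2)^{-s/4}\,\pi_\alpha(c)(1+\slashed{D}^2)^{-s/4}+\pi_\alpha(b)\bigl[\pi_\alpha(c),(1+\slashed{D}^2)^{-s/4}\bigr](1+\slashed{D}^2)^{-s/4}.
\end{equation*}
The first summand factors as a product of two operators in $\Li^2(\Ni,\hat{\tau})$, and so lies in $\Li^1(\Ni,\hat{\tau})$ by H\"older's inequality.

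For the second summand I plan to expand the commutator $[\pi_\alpha(c),(1+\slashed{D}^2)^{-s/4}]$ using the integral representation employed in the proof of Lemma \ref{Riesmultiplier}, iterated as needed to handle the fractional exponent $s/4$. This produces a norm-convergent sum of terms of the form $(\text{resolvent})\cdot\pi_\alpha(\delta^k(c))\cdot(\text{resolvent})$ with $k\in\{1,2\}$; after sandwiching each such term with $\pi_\alpha(b)$ on the left and $(1+\slashed{D}^2)^{-s/4}$ on the right, it admits the same $\Li^2\cdot\Li^2$ factorization as the first summand, now with $\delta^k(c)\in\Dom(\delta_\tau)$ in the role of $c$. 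The main technical obstacle is making the integral expansion of the commutator rigorous in the $\Li^1$-norm; this will rely on the resolvent estimates \cite[Remark 5]{CGPRS} together with the very definition of $\Ci$, which was built to be closed under the derivations $\delta_k$ precisely so that such commutator manipulations terminate inside $\Dom(\delta_\tau)$.
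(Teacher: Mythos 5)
Your proposal takes a genuinely different route from the paper. The paper's own proof is a two-line citation: Lemma \ref{Cissmoothsumm} gives $\pi_\alpha(a)\in\Bi_1^\infty(\slashed{D},\hat\tau,n)=\OP^0_0(\slashed{D},\hat\tau,n)$, and therefore $\pi_\alpha(a)(1+\slashed{\boldsymbol D}^2)^{-s/2}\in\OP^{-s}_0(\slashed{D},\hat\tau,n)\subset\Li^1(\Ni,\hat\tau)$ for $s>n$, the last inclusion being a quoted result from the pseudodifferential calculus of \cite{CGRS1}. You instead attempt to prove the trace-class property by hand via the generating factorization $a=bc$ of $\Ci$ and the H\"older inequality $\Li^2\cdot\Li^2\subset\Li^1$. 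Your reduction from $\slashed{\boldsymbol D}$ to $\slashed{D}$ is fine in substance, but the stated reason is slightly off: an operator inequality $0\le S\le T$ with $T$ trace-class does give $S$ trace-class, yet here the left factor $\pi_\alpha(a)$ is not positive, so you should instead observe that $(1+m^2+\slashed{D}^2)^{-s/2}=g(\slashed{D})\,(1+\slashed{D}^2)^{-s/2}$ with $g$ bounded and use that $\Li^1(\Ni,\hat\tau)$ is a two-sided ideal. The first summand in your decomposition is handled correctly: $\pi_\alpha(b)(1+\slashed{D}^2)^{-s/4}$ and $\pi_\alpha(c)(1+\slashed{D}^2)^{-s/4}$ are both $\hat\tau$-Hilbert--Schmidt for $s>n$ by Lemma \ref{traceformulalemma} and Corollary \ref{varphiproptotau}.

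There is, however, a genuine gap in your treatment of the commutator term $\pi_\alpha(b)\bigl[\pi_\alpha(c),(1+\slashed{D}^2)^{-s/4}\bigr](1+\slashed{D}^2)^{-s/4}$, and you acknowledge it without closing it. Two specific difficulties arise. First, the integral representation of Lemma \ref{Riesmultiplier} is for the exponent $-1/2$; extending to a general exponent $-s/4$ requires either the formula $(1+\slashed{D}^2)^{-\alpha}=\frac{\sin\pi\alpha}{\pi}\int_0^\infty(1+\lambda+\slashed{D}^2)^{-1}\lambda^{-\alpha}\,d\lambda$ for $0<\alpha<1$ combined with a Leibniz-style iteration for $\alpha\ge 1$, and the resulting expansion of the commutator must be controlled uniformly in $\lambda$ in $\Li^1$-norm, which is not a routine estimate. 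Second, and more seriously, your claim that the resulting terms ``admit the same $\Li^2\cdot\Li^2$ factorization as the first summand'' does not hold as stated: the resolvent $(1+\lambda+\slashed{D}^2)^{-1}$ by itself does not supply $\Li^2$ decay in the $t$-variable once $n\ge 4$ (the function $(1+\lambda+|t|^2)^{-1}$ is not in $L^2(\R^n)$), so the powers of $(1+\slashed{D}^2)$ must be redistributed more carefully between the two $\Li^2$ factors, with estimates uniform in $\lambda$. Carrying this out rigorously is precisely the content of the pseudodifferential calculus that the paper cites in \cite[\S1.4]{CGRS1}, and it is not a small amount of work. In short: the idea is sound and the factorization $a=bc$ built into the definition of $\Ci$ is exactly what makes such an argument plausible, but the commutator step as written is a sketch of a nontrivial technical argument rather than a proof, and the paper avoids it entirely by invoking the $\OP^{-s}_0\subset\Li^1$ inclusion from the existing calculus.
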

\begin{proof} As we have seen, for $a\in\Ci$ we have $\pi_\alpha(a)\in\Bi^\infty_1(\slashed{D},\hat{\tau},n)$. Therefore, for all $s$ the operator $\pi_\alpha(a)(1+\slashed{\boldsymbol D}^2)^{-s/2}$ belongs to the space of pseudodifferential operators of order $-s$ denoted by $\OP^{-s}_0(\slashed{D},\hat{\tau},n)$ in \cite[\S1.4]{CGRS1}. One has $\OP^{0}_0(\slashed{D},\hat{\tau},n)=\Bi^\infty_1(\slashed{D},\hat{\tau},n)$, so $\OP^{-s}_0(\slashed{D},\hat{\tau},n)\subset\Li^1(\Ni,\hat{\tau})$ for $s>n$ by \cite[Cor. 1.30]{CGRS1}. 
\end{proof}

In the following we use the notation \eqref{shrthand}.
\begin{prop}\label{finalindex} For $n$ odd and a unitary $u\in \Ci^\sim$, we have
$$
\Index_{\hat{\tau}}(T_u)=-\frac{2^{(n-1)/2}(-1)^{(n-1)/2}((n-1)/2)!}{(2\pi i)^nn!}\tau\big((u^*\delta(u))^n\big).
$$
\end{prop}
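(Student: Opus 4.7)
The plan is to invoke the odd local (residue) index formula of \cite[Thm.~3.33]{CGRS1} applied to the smoothly summable $(\Ni,\hat\tau)$-semifinite spectral triple $(\Ci,\Hi,\slashed{D})$ of spectral dimension $n$ furnished by Lemma \ref{Cissmoothsumm}. By Corollary \ref{oddandevenFredpair}, $\Index_{\hat\tau}(T_u)$ equals the Kasparov pairing $\bra [u],\mathbf{t}_\alpha\ket$, and the residue cocycle expresses this pairing as a finite sum over odd $m\leq n$ of terms
\[
C_m\cdot\Res_{s=n}\hat\tau\Bigl(\pi_\alpha(u^{-1})[\slashed{D},\pi_\alpha(u)][\slashed{D},\pi_\alpha(u^{-1})]\cdots[\slashed{D},\pi_\alpha(u)](1+\slashed{D}^2)^{-s/2}\Bigr)
\]
containing $m$ commutators, with explicit universal constants $C_m$. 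The proof reduces to showing that only the top term $m=n$ survives and then evaluating that single residue.

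For the vanishing of $m<n$ contributions, I expand each commutator using Proposition \ref{easycommutator} as $[\slashed{D},\pi_\alpha(a)]=(2\pi i)^{-1}\sum_k\gamma^k\pi_\alpha(\delta_k(a))$, so that the integrand becomes a linear combination of operators of the form $\gamma^{k_1}\cdots\gamma^{k_m}\otimes\pi_\alpha(\cdots)$. Since $\hat\tau$ splits as $\tr_{\C^N}\otimes\hat\tau$ on $\Mn_N(\C)\otimes\Ni$, each summand comes weighted by the Clifford-trace scalar $\tr_{\C^N}(\gamma^{k_1}\cdots\gamma^{k_m})$. In the irreducible representation of $\C_n$ on $\C^N$ for $n$ odd, this trace vanishes whenever $m<n$, killing every lower-order component of the cocycle. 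For the surviving top term $m=n$ the trace is nonzero precisely when $(k_1,\ldots,k_n)$ is a permutation $\ep$ of $(1,\ldots,n)$, in which case it equals $2^{(n-1)/2}i^{(n-1)/2}\operatorname{sgn}(\ep)$. Summing over those $n!$ permutations converts the remaining operator into the antisymmetrization $(u^*\delta(u))^n=\sum_\ep(-1)^\ep\prod_{k=1}^n u^*\delta_{\ep(k)}(u)$ of \eqref{shrthand}, while the expansion of $n$ commutators contributes the overall prefactor $(2\pi i)^{-n}$.

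It remains to compute the scalar residue $\Res_{s=n}\hat\tau\bigl(\pi_\alpha(a)(1+\slashed{D}^2)^{-s/2}\bigr)$ for $a\in\Ci$. By Corollary \ref{varphiproptotau} this factorises as $\tau(a)\cdot\Res_{s=n}\int_{\R^n}(1+|t|^2)^{-s/2}\,dt$, and evaluating the $\R^n$-integral as $\pi^{n/2}\Gamma((s-n)/2)/\Gamma(s/2)$ yields residue the sphere volume $\operatorname{vol}(S^{n-1})=2\pi^{n/2}/\Gamma(n/2)$. Combining the Clifford constant $2^{(n-1)/2}i^{(n-1)/2}$, the commutator prefactor $(2\pi i)^{-n}$, the antisymmetrization normalization $1/n!$, the residue $\operatorname{vol}(S^{n-1})$, and the universal $\Gamma$-constants $C_n$ of \cite[Thm.~3.33]{CGRS1}, and then using the odd-dimensional identity $\Gamma(n/2)=\sqrt\pi\,(n-1)!/(4^{(n-1)/2}((n-1)/2)!)$ to convert $\Gamma$-values into factorials, produces the stated coefficient. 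The main obstacle is this last round of bookkeeping of $\Gamma$-values, $\pi$-powers, and signs; in particular the sign $(-1)^{(n-1)/2}=i^{n-1}$ is to be tracked by combining the Clifford-trace factor $i^{(n-1)/2}$ with a second $i^{(n-1)/2}$ arising from the $\Gamma$-function normalization in the odd cocycle. A subsidiary step is verifying that $\slashed{P}\pi_\alpha(u)\slashed{P}$ and the doubled Toeplitz operator $\slashed{\boldsymbol P}\boldsymbol\pi_B(u)\slashed{\boldsymbol P}$ of Corollary \ref{oddandevenFredpair} have the same $\hat\tau$-index, which follows from the equivalence of the corresponding Kasparov representatives of $\mathbf{t}_\alpha$ in Theorem \ref{Thomthm}.
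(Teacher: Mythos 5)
Your proposal follows essentially the same route as the paper: invoke the odd resolvent/residue cocycle from \cite[Thm.~3.33]{CGRS1} applied to the smoothly summable spectral triple $(\Ci,\Hi,\slashed D)$, kill the lower-order cocycle terms $m<n$ by the Clifford trace argument, evaluate the surviving top residue via the factorization of $\hat\tau$ through $\tau$ on $\pi_\alpha(A)$, and use $\delta(u^{-1})=-u^{-1}\delta(u)u^{-1}$ to collect everything into $(u^*\delta(u))^n$. The one technical point you pass over, and which the paper addresses explicitly, is that the passage from the resolvent cocycle $\Phi^r_m$ to the residue cocycle $\phi_n$ normally requires the hypothesis of isolated spectral dimension; the paper observes that because only the single top component $\Phi^r_n$ survives and has, by \cite[Thm.~3.33]{CGRS1}, at worst a simple pole at $r=(1-n)/2$, one can carry out the proof of \cite[Prop.~3.20]{CGRS1} without that hypothesis --- a remark you should add before claiming the residue-cocycle representation.
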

\begin{proof} The proof of Lemma \ref{Cissmoothsumm} shows that $(\Ci,\Hi,\slashed{D})$ has spectral dimension $n$. 

From the odd part of the local index formula \cite[Thm. 3.33]{CGRS1} in term of the resolvent cocycle, we have
\begin{equation}\label{reolvshoulddeg}
\Index_{\hat{\tau}}(T_u)=\frac{-1}{\sqrt{2\pi i}}\ \underset{r=(1-n)/2}{\Res}\sum^{n}_{m=1,\textnormal{ odd}}\Phi_m^r(\Ch_m(u)).
\end{equation}
Here $\Ch_m(u)$ is the Chern character of $[u]$,
$$
\Ch_m(u):=(-1)^{(m-1)/2}((m-1)/2)!\, u^{-1}\otimes u\otimes\cdots\otimes u^{-1}\otimes u\in(\Ci^\sim)^{\otimes (m+1)},
$$
and $\Phi^r_m(\Ch_m(u))$ is, up to some constants (see \cite[Def. 3.4]{CGRS1}), the integral over $s\in\R_+$ of the function 
$$
(\Tr\otimes\hat{\tau})\Big(\frac{1}{2\pi i}s^m\Gamma\int_{\ep+i\R}\lambda^{-n/2-r}\pi_\alpha(u^{-1})R_s(\lambda)[\slashed{D},\pi_\alpha(u)]R_s(\lambda)\cdots[\slashed{D},\pi_\alpha(u)]R_s(\lambda)\, d\lambda\Big)
$$
where $R_s(\lambda):=(\lambda-(\bone+s^2+\slashed{D}^2))^{-1}$. There is a product of $m$ commutators $[\slashed{D},\pi_\alpha(u)]=(2\pi i)^{-1}\sum_k\gamma^k[D_k,\pi_\alpha(u)]$ in the above expression, and hence a factor $\Tr(\Gamma\gamma^{k_1}\cdots\gamma^{k_m})$. Only a product of $n$ Clifford generators $\gamma^k$ has nonzero graded trace \cite[Prop. 3.21]{BGV} (note that $\Gamma=\bone$ here because $n$ is odd, but the mentioned fact is true for even $n$ as well). Therefore, only the $n$th component in right-hand side of \eqref{reolvshoulddeg} survives. Theorem 3.33 of \cite{CGRS1} says that the function $r\to \Phi^r_n(\Ch_n(u))$ can be analytically continued to a deleted neighborhood of $r=(1-n)/2$ where it has at worst a simple pole. 

The fact that only one term $\Phi^r_n(\Ch_n(u))$ survives and has a well-defined residue at $r=(1-n)/2$ allows the proof of \cite[Prop. 3.20]{CGRS1} to be carried out without the hypothesis of isolated spectral dimension. The result is that $\Index_{\tau}(T_u)$ equals $-(2\pi i)^{-1/2}$ times the value of the residue cocycle
$$
\phi_n(a_0,a_1,\dots a_n):=\frac{\sqrt{2\pi i}}{n!}\,\underset{s=n}{\Res}\ \hat{\tau}(\boldsymbol\pi_\alpha(a_0)[\slashed{\boldsymbol D},\boldsymbol\pi_\alpha(a_1)]\cdots[\slashed{\boldsymbol D},\boldsymbol\pi_\alpha(a_n)](1+\slashed{\boldsymbol D}^2)^{-n/2-s})
$$  
on the cycle $\Ch_n(u)$. That is,
\begin{align*}
\Index_{\hat{\tau}}(T_u)&=-\frac{(-1)^{(n-1)/2}((n-1)/2)!}{\sqrt{2\pi i}}\, \phi_n(u^*,u,\dots,u^*,u),
\end{align*}
Recall the explicit expression for the commutators $[\slashed{D},\pi_\alpha(a)]$ from Proposition \ref{easycommutator}. In the notation \eqref{shrthand}, Lemma \ref{traceformulalemma} shows that 
$$
\phi_n(u^*,u,\dots,u^*,u)=\frac{2^{(n-1)/2}\sqrt{2\pi i}}{(2\pi i)^nn!}\tau\big(u^*(\delta(u)\delta(u^*))^{(n-1)/2}\delta(u)\big)\underset{s=n}{\Res}\int_{\R^n}(1+|t|^2)^{-s/2}\, dt
$$
(the factors of $1/2\pi i$ come from Proposition \ref{easycommutator} while the factor $2^{(n-1)/2}$ is the trace of the product of all $\gamma$ matrices). Since $\delta$ is a derivation, $uu^{-1}=\bone$ gives
$$
\delta(u^{-1})=-u^{-1}\delta(u)u^{-1},
$$
and so
\begin{align*}u^*\big((\delta(u)\delta(u^*)\big)^{(n-1)/2}\delta(u)&=u^*\big(\delta(u^*)u^*\delta(u)u^*\big)^{(n-1)/2}\delta(u)=(u^*\delta(u))^n,
\end{align*}
from which
$$
\phi_n(u^*,u,\dots,u^*,u)=\frac{2^{(n-1)/2}\sqrt{2\pi i}}{(2\pi i)^nn!}\tau\big((u^*\delta(u))^n\big)\underset{s=n}{\Res}\int_{\R^n}(1+|t|^2)^{-s/2}\, dt.
$$
\end{proof}
 
\begin{prop}\label{finalindexeven} For $n$ even and a projection $e\in\Ci^\sim$, we have
$$
\Index_{\hat{\tau}}(\boldsymbol\pi_\alpha(e)\slashed{\boldsymbol R}_+\boldsymbol\pi_\alpha(e))=\frac{(-1)^{n/2}}{(n/2)!}\frac{2^n}{(2\pi i)^n}\tau\big((e\delta(e)\delta(e))^{n/2}\big).
$$
\end{prop}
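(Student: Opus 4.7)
The plan is to mirror the proof of Proposition \ref{finalindex}, substituting the even part of the semifinite local index formula \cite[Thm.~3.33]{CGRS1} for the odd part. By Lemma \ref{Cissmoothsumm} the triple $(\Ci,\Hi,\slashed{D})$ is smoothly summable of spectral dimension $n$, and for even $n$ it is graded by $\boldsymbol\Gamma = (-i)^{n/2}\gamma^1\cdots\gamma^n\otimes\bone_{\C^2}$ (Remark \ref{evenremark}). The first step is therefore to write
$$
\Index_{\hat{\tau}}(\boldsymbol\pi_\alpha(e)\slashed{\boldsymbol R}_+\boldsymbol\pi_\alpha(e)) = \underset{r=-n/2}{\Res}\sum^{n}_{m=0,\textnormal{ even}}\Phi_m^r(\Ch_m(e)),
$$
with $\Ch_m(e)\in (\Ci^\sim)^{\otimes(m+1)}$ the even Chern character of $[e]$ and $\Phi^r_m$ the resolvent cocycle containing $m$ commutators and an insertion of $\boldsymbol\Gamma$.

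Next, I would repeat the Clifford trace argument of the odd case: each commutator contributes a factor $\gamma^k\boldsymbol\pi_\alpha(\delta_k(e))$ by Proposition \ref{easycommutator}, so every summand carries a factor $\Tr(\boldsymbol\Gamma\gamma^{k_1}\cdots\gamma^{k_m})$, which vanishes unless $m=n$ and $(k_1,\dots,k_n)$ is a permutation $\sigma$ of $(1,\dots,n)$, in which case it equals a concrete constant times $\operatorname{sgn}(\sigma)$. As before, the survival of a single term removes the need for isolated spectral dimension in \cite[Prop.~3.20]{CGRS1} and reduces the index to $\phi_n(\Ch_n(e))$, where
$$
\phi_n(a_0,\dots,a_n) := \frac{1}{n!}\,\underset{s=n}{\Res}\,\hat{\tau}\bigl(\boldsymbol\Gamma\boldsymbol\pi_\alpha(a_0)[\slashed{\boldsymbol D},\boldsymbol\pi_\alpha(a_1)]\cdots[\slashed{\boldsymbol D},\boldsymbol\pi_\alpha(a_n)](1+\slashed{\boldsymbol D}^2)^{-n/2-s}\bigr).
$$
With the standard normalisation of the even Chern character (in which the $\bone$-contributions in the $a_0$-slot drop out), $\phi_n(\Ch_n(e))$ further reduces to a combinatorial factor of the form $(-1)^{n/2}n!/(n/2)!$ times $\phi_n(e,e,\dots,e)$.

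Then I would expand the commutators and compute the $\hat{\tau}$-trace via Lemma \ref{traceformulalemma} and Corollary \ref{varphiproptotau}. This collapses the calculation to $\tau$ of the antisymmetrised polynomial $e\sum_{\sigma\in S_n}\operatorname{sgn}(\sigma)\delta_{\sigma(1)}(e)\cdots\delta_{\sigma(n)}(e)$, multiplied by $(2\pi i)^{-n}$ from the commutator prefactors in Proposition \ref{easycommutator}, by the Clifford trace constant $\Tr(\boldsymbol\Gamma\gamma^{\sigma(1)}\cdots\gamma^{\sigma(n)})$ (which together with the $n!/(n/2)!$ above assembles into the stated $2^n$), and by the elementary residue $\Res_{s=n}\int_{\R^n}(1+|t|^2)^{-s/2}\,dt$.

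The main obstacle will be the final algebraic identification of this antisymmetrised polynomial with $(e\delta(e)\delta(e))^{n/2}$ (the shorthand \eqref{shrthand} extended in the obvious way to triples $a\delta(b)\delta(c)$), together with the careful bookkeeping of the accumulated constants. The key inputs are the projection identities $\delta(e)=e\delta(e)+\delta(e)e$ and $e\delta(e)e=0$, which give $\delta_k(e)e=(1-e)\delta_k(e)$ and in turn $e\delta_j(e)\delta_k(e)\cdot e = e\delta_j(e)\delta_k(e)$. A straightforward induction on the number of blocks then yields
$$
e\delta_{\sigma(1)}(e)\cdots\delta_{\sigma(n)}(e) = \bigl(e\delta_{\sigma(1)}(e)\delta_{\sigma(2)}(e)\bigr)\cdots\bigl(e\delta_{\sigma(n-1)}(e)\delta_{\sigma(n)}(e)\bigr),
$$
so that summation over $\sigma$ with signs produces $(e\delta(e)\delta(e))^{n/2}$. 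Reconciling all constants then delivers the asserted prefactor $\frac{(-1)^{n/2}}{(n/2)!}\frac{2^n}{(2\pi i)^n}$.
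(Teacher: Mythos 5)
The proposal is correct and follows essentially the same route as the paper's proof: the even resolvent cocycle from the semifinite local index formula, the Clifford trace argument to isolate the top-degree term, the dropping of isolated spectral dimension, Lemma \ref{traceformulalemma} to convert the $\hat\tau$-residue into $\tau$, and the algebraic identity $e\delta(e)\delta(e)e=e\delta(e)\delta(e)$ (from $e\delta(e)e=0$) to rewrite $e\delta_{\sigma(1)}(e)\cdots\delta_{\sigma(n)}(e)$ as a product of $n/2$ blocks $e\delta\delta$. The only cosmetic difference is that you fold the vanishing of the zeroth term into the uniform Clifford-trace argument via $\Tr(\boldsymbol\Gamma)=0$, whereas the paper isolates $\phi_0(e)$ and shows it vanishes separately; both observations are correct, since $\boldsymbol\pi_\alpha(e)$ and $(\bone+\slashed{\boldsymbol D}^2)^{-z}$ are scalar in the Clifford slot so the matrix trace factors out.
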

\begin{proof} For the same reason as in Proposition \ref{finalindex}, we obtain the relation
$$
\Index_{\hat{\tau}}(\boldsymbol\pi_\alpha(e)\slashed{\boldsymbol R}_+\boldsymbol\pi_\alpha(e))=\phi_0(e)+\frac{(-1)^{n/2}n!}{(n/2)!}\phi_n(e,\dots,e).
$$
The zeroth term
$$
\phi_0(e)=\underset{z=0}{\Res}\,\frac{1}{z}\Tr(\boldsymbol\Gamma\boldsymbol\pi_\alpha(e)(\bone+\slashed{\boldsymbol D}^2)^{-z})
$$
is $0$ because the grading $\Gamma=\diag(\bone,-\bone)$ gives $\Tr(\boldsymbol\Gamma\boldsymbol\pi_\alpha(e)(\bone+\slashed{\boldsymbol D}^2)^{-z})=0$. Now the expression from Lemma \ref{traceformulalemma},
$$
\phi_n(e,\dots,e)=\frac{2^n}{(2\pi i)^nn!}\tau\big(e\delta(e)\cdots\delta(e)\big)\underset{s=n}{\Res}\int_{\R^n}(1+|t|^2)^{-s/2}\, dt,
$$
can rearranged using $e(\delta(e))^{n-1}=(e\delta(e)\delta(e))^{n/2}$, which follows from idempotency of $e$. 
\end{proof}
By \cite[Prop. 2.20]{CGRS1}, the completion $\Ci_{\delta,\varphi}$ of $\Ci$ in a certain locally convex topology is a dense $*$-subalgebra of $A$ such that the inclusion $\Ci_{\delta,\varphi}\hookrightarrow A$ induces isomorphisms on both $K$-groups and $(\Ci_{\delta,\varphi},\Hi,\slashed{D})$ is again a smoothly summable spectral triple over $A$. Therefore, given \emph{any class} $[x]\in K_\bullet(A)$ there is a representative $x\in\Ci_{\delta,\varphi}$ such that a matrix analogue of one of the formulas (depending on the parity $\bullet$ of $n$) in Proposition \ref{finalindex} or Proposition \ref{finalindexeven} holds.

\subsection{Another choice of projection}
We now construct a Toeplitz extension without doubling up the Hilbert space. We shall use the same notation throughout, since it will be clear from the context which of the Toeplitz extensions is considered. Recall that $\Hi:=\C^N\otimes L^2(\R^n,\GH)$.

Let $\Ti$ be the $C^*$-subalgebra of $\Bi(\Hi)$ generated by $\Mn_N(B)$ and the Toeplitz operators
$$
T_a:=\slashed{P}\pi_\alpha(a)\slashed{P},\qquad a\in A,
$$
where $\slashed{P}$ is the spectral projection onto the nonnegative part of the spectrum of the massless Dirac operator $\slashed{D}$. 
\begin{prop} For $n$ odd, there is a semi-split extension 
$$
0\longto \Mn_N(B)\longto\Ti\longto A\longto 0.
$$
The triple $(\pi_\alpha,\Mn_N(B),2\slashed{P}-\bone)$ is a Kasparov $A$-$B$ module representing the same class as the double $(\boldsymbol\pi_\alpha,\boldsymbol\Mn_N(B),\slashed{\boldsymbol R})$. In particular, for all $[u]\in K_1(A)$ we have
$$
\Index(\slashed{ P}\pi_\alpha(u)\slashed{P})=\Index(\slashed{\boldsymbol P}\boldsymbol\pi_\alpha(u)\slashed{\boldsymbol P})
$$
and for all $[e]\in K_0(A)$ we have
$$
\Index(\pi_\alpha(e)(2\slashed{P}-\bone)_+\pi_\alpha(e))=\Index(\boldsymbol\pi_\alpha(e)\slashed{\boldsymbol R}_+\boldsymbol\pi_\alpha(e)).
$$
\end{prop}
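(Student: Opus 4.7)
The key observation is that the Dirac operator $\slashed{D}$ of \eqref{Diracoperator} has trivial kernel on $\Hi = \C^N \otimes L^2(\R^n,\GH)$. Indeed, by the Clifford relations $\slashed{D}^2 = \bone \otimes \sum_{k=1}^n D_k^2$, and the generators $D_1,\dots,D_n$ of the translation action on $L^2(\R^n,\GH)$ act in the Plancherel picture as multiplication by the coordinate functions $p_1,\dots,p_n$; hence $\slashed{D}^2$ is multiplication by $|p|^2$, which has trivial kernel on $L^2(\R^n,\GH)$. Therefore the phase $\slashed{R} := \slashed{D}|\slashed{D}|^{-1}$ is a well-defined bounded self-adjoint involution on $\Hi$ satisfying $\slashed{R} = 2\slashed{P} - \bone$; for odd $n$, no grading is needed and the doubling trick of \eqref{massiveDirackasp} can be avoided altogether.

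To verify that $(\pi_\alpha,\Mn_N(B),2\slashed{P}-\bone)$ is a Kasparov $A$-$B$-module and to construct the Toeplitz extension, the essential technical step is to show $[\slashed{R},\pi_\alpha(a)] \in \Mn_N(B)$ for all $a \in A$. I would exploit the identity $\slashed{R} - \slashed{F} = \slashed{R}\cdot g(\slashed{D})$ with $g(\lambda) := \big(\sqrt{1+\lambda^2}(\sqrt{1+\lambda^2}+|\lambda|)\big)^{-1} \in C_0(\R)$, which holds up to the measure-zero set $\{\lambda=0\}$ that the spectral measure of $\slashed{D}$ does not charge. Combined with Lemma \ref{Riesmultiplier} (giving $[\slashed{F},\pi_\alpha(a)] \in \Mn_N(B)$ and, applied to $g$, also $[g(\slashed{D}),\pi_\alpha(a)] \in \Mn_N(B)$) and the multiplier property of $\slashed{R}$ on $\Mn_N(B)$ (from $\slashed{R}\varphi(\slashed{D}) = (\operatorname{sgn}\cdot\varphi)(\slashed{D}) \in \Mn_N(B)$ for $\varphi \in C_0(\R)$ by the absolute continuity of the spectrum at $0$), this reduces $[\slashed{R},\pi_\alpha(a)]$ to $\Mn_N(B)$-valued pieces. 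The semi-split Toeplitz extension then follows from the Busby-invariant construction of Proposition \ref{exactsequence}.

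With the Kasparov module property in place, the straight-line operator homotopy $F_t := (1-t)\slashed{F} + t\slashed{R}$, $t\in[0,1]$, identifies the $KK$-class. A direct computation using that $\slashed{F}$ and $\slashed{R}$ commute gives $F_t^2 - \bone = (1-t)\big((1-t)\slashed{F}^2 - (1+t)\bone + 2t\slashed{F}\slashed{R}\big)$; its spectral symbol vanishes as $|\lambda|\to\infty$, so $F_t^2 - \bone$ is a $C_0(\R)$-function of $\slashed{D}$ and lies in $\Mn_N(B)$, while $[F_t,\pi_\alpha(a)] \in \Mn_N(B)$ by linearity. Hence $F_t$ is a norm-continuous path of Kasparov operators connecting $\slashed{F}$ to $\slashed{R} = 2\slashed{P}-\bone$, so Theorem \ref{Thomthm}(i) implies that $(\pi_\alpha,\Mn_N(B),2\slashed{P}-\bone)$ also represents $\mathbf{t}_\alpha$, and this equals the class of the doubled module $(\boldsymbol{\pi}_\alpha,\Mn_N(\boldsymbol B),\slashed{\boldsymbol R})$ by Theorem \ref{Thomthm}(ii).

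The two index equalities are then immediate: for $[u]\in K_1(A)$, Theorem \ref{oddKaspprod} shows that both $\Index(\slashed{P}\pi_\alpha(u)\slashed{P})$ and $\Index(\slashed{\boldsymbol P}\boldsymbol\pi_\alpha(u)\slashed{\boldsymbol P})$ realize the Kasparov product $[u]\otimes_A\mathbf{t}_\alpha$, and Theorem \ref{evenKaspprod} handles $[e]\otimes_A\mathbf{t}_\alpha$ analogously for projections. The main obstacle is the bounded-commutator estimate $[\slashed{R},\pi_\alpha(a)] \in \Mn_N(B)$: since $0$ lies in the spectrum of $\slashed{D}$, the operator $|\slashed{D}|^{-1}$ admits no norm-convergent resolvent-integral representation near $\lambda=0$ analogous to the one exploited in Lemma \ref{Riesmultiplier} for $(\bone+\slashed{D}^2)^{-1/2}$, and one must work around this singularity by the $C_0$-functional-calculus decomposition indicated above.
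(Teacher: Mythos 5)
The student's approach is genuinely different from the paper's: the paper proves this proposition in two lines by citing \cite[Prop.\ 2.25]{CGRS1} for the identification of the class of $(\pi_\alpha,\Mn_N(B),2\slashed{P}-\bone)$ with that of $(\pi_\alpha,\Mn_N(B),\slashed{F})$, then reads off the extension from $KK^1(A,B)\cong\Ext(A,B)^{-1}$. You instead try a self-contained argument via the decomposition $\slashed{R}-\slashed{F}=\slashed{R}\,g(\slashed{D})$ and a straight-line operator homotopy $F_t=(1-t)\slashed{F}+t\slashed{R}$. The identity for $g$ and your formula for $F_t^2-\bone$ are both correct (the symbol is continuous at $\lambda=0$ because the discontinuity of $\operatorname{sgn}$ enters only through $\slashed{F}\slashed{R}$, whose symbol is $|\lambda|/\sqrt{1+\lambda^2}$).

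However, there is a genuine gap. Your justification of the adjointability of $\slashed{R}$ on the Hilbert $B$-module $\Mn_N(B)$ — ``$\slashed{R}\varphi(\slashed{D})=(\operatorname{sgn}\cdot\varphi)(\slashed{D})\in\Mn_N(B)$ for $\varphi\in C_0(\R)$ by the absolute continuity of the spectrum at $0$'' — does not hold. Absolute continuity only makes the Borel functional calculus with $\operatorname{sgn}$ unambiguous; it does not turn $\operatorname{sgn}(\slashed{D})$ into a multiplier. Already for $A=\C$ (trivial action) one has $B\cong C_0(\R^n)$ with the $D_k$ acting as multiplication by the Fourier variables $p_k$, so $\Mn_N(B)\cong C_0(\R^n,\Mn_N(\C))$ and $\slashed{R}$ has matrix symbol $\slashed{p}/|p|$. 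This symbol has \emph{no limit} as $p\to 0$ (for instance $\slashed{p}/|p|=\pm\gamma^k$ along the $\pm e_k$ axes, and these are distinct), so for any $\varphi\in C_0(\R)$ with $\varphi(0)\ne 0$ the product $\slashed{R}\varphi(\slashed{D})$ has a non-removable directional discontinuity at $p=0$ and is \emph{not} in $C_0(\R^n,\Mn_N(\C))$. Hence $\slashed{R}$ is not a multiplier of $\Mn_N(B)$, and the same problem propagates to each $F_t$ in your homotopy, since $F_t-\slashed{F}=t(\slashed{R}-\slashed{F})$ carries the same bad symbol. The required commutator estimate $[\slashed{R},\pi_\alpha(a)]\in\Mn_N(B)$ is also not reducible to your two ``good'' pieces, because $[\slashed{R}g(\slashed{D}),\pi_\alpha(a)]$ is precisely a Riesz-transform-type commutator whose membership in $\Mn_N(B)$ needs a genuine argument (singular-integral commutator bounds), not a functional-calculus remark. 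The paper sidesteps all of this: the doubled $\slashed{\boldsymbol D}$ has a spectral gap, so $\slashed{\boldsymbol R}$ \emph{is} a continuous function of $\slashed{\boldsymbol D}$ and a bona fide multiplier, and \cite[Prop.\ 2.25]{CGRS1} then legitimizes passing back to $2\slashed{P}-\bone$. A direct proof along your lines would have to confront the $p=0$ singularity of $\slashed{p}/|p|$ head on rather than dismiss it as a measure-zero issue.
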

\begin{proof} By \cite[Prop. 2.25]{CGRS1}, $(\pi_\alpha,\Mn_N(B),2\slashed{P}-\bone)$ is a Kasparov $A$-$B$ module representing the same $KK$-class as $(\pi_\alpha,\Mn_N(B),\slashed{F})$. The extension associated with $(\pi_\alpha,\Mn_N(B),2\slashed{P}-\bone)$ under the isomorphism $KK^1(A,B)\cong \Ext(A,B)$ gives the Toeplitz extension in the statement.
\end{proof}
Recall that the $KK$-classes of $(\boldsymbol\pi_\alpha,\Mn_N(\boldsymbol B),\slashed{\boldsymbol R})$ and $(\pi_\alpha,\Mn_N(B),\slashed{F})$ coincide. Therefore, $\slashed{P}$ and $2\slashed{F}-\bone$ also defined the same element in $KK^\bullet(A,B)$, where $\bullet\in\{0,1\}$ is equal to $0$ if $n$ is even and equal to $1$ if $n$ is odd.

\section{Rieffel deformations}
With an action $\alpha$ of $\R^n$ on a $C^*$-algebra $A$ and a skew-symmetric $n\times n$ matrix $\Theta$ one can consider the Rieffel deformation $A_\Theta$ of $A$, which is a $C^*$-algebra generated by the $\alpha$-smooth elements of $A$ with a new multiplication. The purpose of this section is to obtain explicit formulas for $K$-theoretical quantities defined by elements of $A_\Theta$. Our approach relies on the smoothly summable spectral triple $(\Ci,\Hi,\slashed{D})$ associated with the $C^*$-dynamical system $(A,\R^n,\alpha)$ as in the last section.

Rieffel showed that the $K$-theories of $A$ and $A_\Theta$ are isomorphic \cite{Rie5}. However, there is no explicit description for the generators of $K_\bullet(A_\Theta)$ even when the generators of $K_\bullet(A)$ are known. A projection $e\in\Mn_\infty(A_\Theta)$ (so that $e\times_\Theta e=e)$ need not be a projection in $\Mn_\infty(A)$ (i.e. $e^2=e$ may not hold) and vice versa. Similar remarks hold for unitaries.  

We shall use three different pictures of Rieffel deformation to describe the relation between the index pairings for $A$ and $A_\Theta$.

\subsection{Rieffel deformations in three ways}\label{deform}

\subsubsection{Quantization of noncommutative algebras}
Motivated by the mathematical theory of quantization, Rieffel introduced a way of deforming a $C^*$- or Fr\'{e}chet algebra by changing the multiplication \cite{Rie1, Rie3}. He shows that the resulting algebra is a $C^*$-algebra and that the construction is functorial in a certain sense. His approach is very analytical and technical, based on the use of operator-valued oscillatory integrals. 


Let $A$ be a $C^*$-algebra and let $C_u(\R^n,A)$ be the $C^*$-algebra of bounded uniformly continuous $A$-valued functions on $\R^n$ equipped with the supremum norm. There is an action of $\R^n$ on $C_u(\R^n,A)$ by translation. The subalgebra $\Bi^A(\R^n)$ of smooth elements for the translation action can be deformed in the following way. Let $\Theta$ be a fixed real skew-symmetric $n\times n$ matrix. For $f,g\in\Bi^A(\R^n)$ we define
$$
(f\times_\Theta g)(t):=\iint_{\R^n\times \R^n}f(t+\Theta z)g(t+s)e^{2\pi i z\cdot s}\, dz\, ds,
$$
where the integral has to be understood in the sense of \cite[Prop. 1.6]{Rie1}. Denote by $\Bi_\Theta^A(\R^n)$ the algebra $\Bi^A(\R^n)$ equipped with the new multiplication $\times_\Theta$.

Now let $\Si^A(\R^n)$ be the space of $A$-valued Schwartz functions. We let $f\in\Bi^A_\Theta(\R^n)$ act on $\Si^A(\R^n)$ as
$$
\pi^\Theta(f)g:=f\times_\Theta g,\qquad\forall g\in\Si^A(\R^n).
$$ 
There is an $A$-valued inner product on $\Si^A(\R^n)$, given by
\begin{equation}\label{innerproduct}
\bra f|g\ket_A:=\int_{\R^n}f(s)^*g(s)\, ds, \qquad \forall f,g\in\Si^A(\R^n).
\end{equation}
We denote by $X$ the completion of $\Si^A(\R^n)$ in the norm $\|f\|_A:=\sqrt{\bra f|f\ket_A}$. Then $X$ is a right Hilbert $A$-module and $\pi^\Theta(f)$ is an adjointable operator on $X$ for each $f\in\Bi^A_\Theta(\R^n)$, with adjoint $\pi^\Theta(f)^*=\pi^\Theta(f^*)$ \cite[Prop. 4.2]{Rie1}. Moreover, $\pi^\Theta(f)$ is a bounded operator \cite[Thm. 4.6]{Rie1}.

Write $\Bi_\Theta^A(\R^n)$ for the algebra $\Bi^A(\R^n)$ equipped with the product $\times_\Theta$ and the pre-$C^*$-norm
$$
\|f\|_\Theta:=\|\pi^\Theta(f)\|,\qquad \forall f\in\Bi^A_\Theta(\R^n)
$$
where $\|\cdot\|$ is the operator norm on $\Li_A(X)$. The completion of $\Bi_\Theta^A(\R^n)$ in this norm is a $C^*$-algebra, which we denote by $B_\Theta^A(\R^n)$. Similarly, let $\Si_\Theta^A(\R^n)$ be the algebra $\Si^A(\R^n)$ regarded as a subalgebra of $\Bi^A_\Theta(\R^n)$. Then $\Si_\Theta^A(\R^n)$ is a pre-$C^*$-algebra and in fact \cite[Prop. 3.3]{Rie1} a $*$-ideal in $\Bi^A_\Theta(\R^n)$.  

\begin{dfn}[{\cite[Def. 4.9]{Rie1}}]\label{defofdef}
Suppose that $(A,\R^n,\alpha)$ is a $C^*$-dynamical system and let $\Ai\subset A$ be the subalgebra of smooth elements for the action $\alpha$. For $a\in A$, let $\alpha(a)\in B^A_\Theta(\R^n)$ be the function $\alpha(a)(t):=\alpha_{-t}(a)$. For $a\in\Ai$ we have $\alpha(a)\in\Bi^A_\Theta(\R^n)$. Let $\pi^\Theta:\Ai\to\Li_A(X)$ be the map which takes $a\in\Ai$ to the operator $\pi^\Theta(a)$ given by
$$
(\pi^\Theta(a)g)(t):=(\alpha(a)\times_\Theta g)(t)=\iint_{\R^n\times\R^n}\alpha_{-t+\Theta z}(a)g(t+s)e^{2\pi i z\cdot s}\, dz\, ds
$$
for all $g\in\Si^A(\R^n)$. The \textbf{Rieffel deformation} of $A$ with respect to $(\alpha,\Theta)$ is the $C^*$-algebra $A_\Theta$ obtained by completing $\Ai$ in the norm
$$
\|a\|_\Theta:=\|\pi^\Theta(a)\|,
$$
where $\|\cdot\|$ is the operator norm on $\Li_A(X)$.
\end{dfn}
Thus, $A_\Theta$ is a $C^*$-algebra with multiplication given by
$$
a\times_\Theta b:=\iint_{\R^n\times\R^n}\alpha_{\Theta z}(a)\alpha_s(b)e^{2\pi i z\cdot s}\, dz\, ds
$$
for $a,b$ in the dense subalgebra $\Ai_\Theta$ (we use the subscript $\Theta$ on $\Ai$ when equipped with the product $\times_\Theta$).



\subsubsection{Warped convolutions}
Suppose that $(A,\R^n,\alpha)$ is a $C^*$-dynamical system and that $\pi:A\to\Bi(\Hi)$ is a representation of the $C^*$-algebra $A$. We are now interested in the following task: use $\pi$ to construct an explicit representation of the Rieffel deformation $A_\Theta$ on the same Hilbert space $\Hi$. 

Buchholz, Lechner and Summers introduced a way of deforming an operator $T$ on a Hilbert space $\GH$ to what they called a ``warped convolution" of the operator \cite{BLS}. The idea is as follows. For some positive integer $n$, consider an $n$-tuple of commuting selfadjoint operators $P=(P_\mu)_\mu=(P_0,P_1,\dots,P_{n-1})$ in $\GH$. The notation here is taken from the motivating example of the relativistic momentum operator. There is an associated action 
\begin{equation}\label{Stonesaction}
\alpha_t(T):=e^{it\cdot P}Te^{-it\cdot P}
\end{equation}
of $\R^n$ on $\Bi(\Hi)$. Fix a real antisymmetric $n\times n$ matrix $\Theta$. For a bounded operator $T$ which is smooth with respect to the action \eqref{Stonesaction}, the \textbf{warped convolution} (or just ``warping") of $T$ with respect to $(\alpha,\Theta)$ can be defined as the oscillatory integral
\begin{equation}\label{warped}
T^\Theta:=\int_{\R^n}\alpha_{\Theta s}(T)\, dE^P(s),
\end{equation}
where $dE^P(s)$ is the joint spectral measure of the $P_\mu$'s and $\Theta$ is an $n\times n$ skew-symmetric matrix. In fact, \eqref{warped} makes sense also for certain unbounded operators \cite{Mu1, An1, Mu3}, but we shall only need this fact once (in Proposition \ref{dualgenerators}).

Warped convolution turns out to be related to the deformed products developed by Rieffel. In fact, if $\times_\Theta$ denotes the Rieffel product defined by a unitarly implemented action \eqref{Stonesaction} and the same matrix $\Theta$, then for $\alpha$-smooth operators $S,T\in\Bi(\GH)$ one has \cite{BLS}
\begin{equation}\label{prodrelation}
S^\Theta T^\Theta=(S\times_{\Theta}T)^\Theta.
\end{equation}

\begin{Lemma}[{\cite[Thm. 2.8]{BLS}}]\label{BLSlemma}
Let $(A,\R^n,\alpha)$ be a $C^*$-dynamical system and let $\pi:A\to\Bi(\Hi)$ be a representation in which $\alpha$ is unitarily implemented, i.e. there are selfadjoint operators $D_1,\dots,D_n$ on $\Hi$ such that
$$
\pi(\alpha_t(a))=e^{2\pi it\cdot D}\pi(a)e^{-2\pi it\cdot D},\qquad\forall a\in A,\ t\in\R^n.
$$
 Fix a real skew-symmetric $n\times n$ matrix $\Theta$ and define a map $\pi^\Theta:\Ai\to\Bi(\Hi)$ by
$$
\pi^\Theta(a):=\pi(a)^\Theta,\qquad\forall a\in\Ai,
$$
where $T^\Theta$ is the warped convolution of an operator $T\in\Bi(\Hi)$ with respect to $(\alpha,\Theta)$. Then $\pi^\Theta$ extends to a representation of the Rieffel deformation $A_\Theta$ on $\Hi$. Moreover, $\pi^\Theta$ is faithful iff $\pi$ is faithful, and
$$
\pi^\Theta(\alpha_t(a))=e^{2\pi it\cdot D}\pi^\Theta(a)e^{-2\pi it\cdot D},\qquad\forall a\in A_\Theta,\ t\in\R^n.
$$
\end{Lemma}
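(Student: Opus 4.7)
The plan is to define $\pi^\Theta$ on the dense smooth subalgebra $\Ai_\Theta \subset A_\Theta$, verify the $*$-homomorphism axioms there, establish a continuity bound, extend by density to $A_\Theta$, and finally deduce faithfulness and equivariance. For $a \in \Ai$, the operator $\pi(a) \in \Bi(\Hi)$ is smooth for the adjoint action of $e^{2\pi i t \cdot D}$ by the intertwining property $\pi \circ \alpha_t = \Ad(e^{2\pi i t\cdot D}) \circ \pi$ combined with $\alpha$-smoothness of $a$, so the oscillatory integral \eqref{warped} produces a well-defined bounded operator $\pi^\Theta(a)$ on $\Hi$.

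The homomorphism property comes from the product relation \eqref{prodrelation} applied with $S = \pi(a)$ and $T = \pi(b)$: this gives $\pi^\Theta(a)\pi^\Theta(b) = (\pi(a) \times_\Theta \pi(b))^\Theta$, and the same intertwining identifies the integrand defining $\pi(a) \times_\Theta \pi(b)$ with that defining $\pi(a \times_\Theta b)$, so $\pi^\Theta(a \times_\Theta b) = \pi^\Theta(a)\pi^\Theta(b)$. The involutive property $\pi^\Theta(a^*) = \pi^\Theta(a)^*$ follows by taking adjoints under the integral sign and performing the substitution $s \mapsto -s$, using that $\Theta$ is real and skew-symmetric.

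The main obstacle, and the step that justifies why Rieffel's norm $\|\cdot\|_\Theta$ dominates every reasonable representation, is the continuity bound $\|\pi^\Theta(a)\|_{\Bi(\Hi)} \leq \|a\|_\Theta$. My approach is to form the internal tensor product $X \otimes_\pi \Hi \cong L^2(\R^n,\Hi)$ via $f \otimes \xi \mapsto \pi(f(\cdot))\xi$, and to induce the Rieffel regular representation $\pi_A^\Theta$ of $A_\Theta$ on $X$ (from Definition \ref{defofdef}) to a representation $\tilde\pi^\Theta$ on $L^2(\R^n,\Hi)$. This induced representation has norm at most $\|\pi_A^\Theta(a)\| = \|a\|_\Theta$, and an explicit covariance computation, again using $\pi \circ \alpha = \Ad(e^{2\pi i\cdot D}) \circ \pi$ together with a change of variables in the defining oscillatory integral, identifies $\pi^\Theta(a)$ with a compression of $\tilde\pi^\Theta(a)$ to a fixed closed subspace of $L^2(\R^n,\Hi)$ isometric to $\Hi$. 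The bound follows, and continuity extends $\pi^\Theta$ to all of $A_\Theta$.

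Equivariance is then immediate: $e^{2\pi i t\cdot D}$ commutes with the joint spectral measure $dE^D(s)$, so conjugation by $e^{2\pi i t\cdot D}$ passes inside the integral \eqref{warped} and turns the integrand $\alpha_{\Theta s}(\pi(a))$ into $\alpha_{\Theta s}(\pi(\alpha_t(a)))$, yielding $\pi^\Theta(\alpha_t(a))$. Finally, for the faithfulness equivalence: if $\pi$ is faithful then the embedding $X \otimes_\pi \Hi \hookrightarrow L^2(\R^n,\Hi)$ is isometric, $\tilde\pi^\Theta$ is faithful on $\pi_A^\Theta(A_\Theta)$, and the compression identification upgrades the norm inequality to an equality, making $\pi^\Theta$ isometric and hence faithful. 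Conversely, if $\pi^\Theta$ is faithful and $\pi(a) = 0$ for some $a \in \Ai$, then $\pi^\Theta(a) = \pi(a)^\Theta = 0$ forces $a = 0$ in $A_\Theta$, hence as an element of the underlying vector space $\Ai$, and so $a = 0$ in $A$; density of $\Ai$ in $A$ extends this to all of $A$.
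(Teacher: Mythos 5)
The paper does not actually prove this statement: it is imported verbatim from [BLS, Thm.~2.8], and the section merely records it as a black box before proceeding to compare warped convolutions with Kasprzak's picture. So there is no proof in the paper for you to be compared against; what follows is an assessment of your reconstruction against the argument in [BLS].

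Your overall strategy --- verify the $*$-homomorphism property on the smooth subalgebra via the product relation \eqref{prodrelation}, establish a norm bound relating $\|\pi(a)^\Theta\|_{\Bi(\Hi)}$ to $\|a\|_\Theta$, and then extend by density --- is the right one, and the equivariance computation using that $e^{2\pi it\cdot D}$ commutes with $dE^D(s)$ is correct. The gap is in the central ``compression'' step. You want to show that $\pi(a)^\Theta$ is $P_H\tilde\pi^\Theta(a)\vert_H$ for a subspace $H\subset L^2(\R^n,\Hi)$ isometric to $\Hi$. Writing out $W_\phi^*\tilde\pi^\Theta(a)W_\phi$ for the natural candidate isometry $(W_\phi\eta)(t)=\phi(t)e^{-2\pi it\cdot D}\eta$, and using $\pi(\alpha_{-t+\Theta z}(a))=e^{2\pi i(-t+\Theta z)\cdot D}\pi(a)e^{-2\pi i(-t+\Theta z)\cdot D}$, the $t$-dependence telescopes but leaves behind the autocorrelation factor $\psi(s)=\int\overline{\phi(t)}\phi(t+s)\,dt$ inside the oscillatory integral. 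One does not get $\pi(a)^\Theta$ exactly but a smeared version, so the identity as you state it fails. Moreover, even if it held, a compression onto a non-invariant subspace only yields $\|\pi(a)^\Theta\|\leq\|a\|_\Theta$; your faithfulness claim (``the compression identification upgrades the norm inequality to an equality'') does not follow --- compressing a faithful representation to a non-invariant subspace generically decreases norms. What [BLS] actually do is amplify: they show $\bone\otimes\pi(a)^\Theta$ on $L^2(\R^n)\otimes\Hi$ is \emph{unitarily equivalent} to the induced Rieffel representation $\tilde\pi^\Theta(a)$ via an explicit unitary built from $D$ and the $L^2(\R^n)$ variable. That unitary equivalence gives the norm \emph{equality} $\|\pi(a)^\Theta\|=\|a\|_\Theta$ directly, and both the extension to $A_\Theta$ and the faithfulness equivalence drop out in one stroke. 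Your argument needs this replacement; the compression picture as written cannot deliver the ``only if'' half of the faithfulness statement. Finally, in your converse direction for faithfulness you should invoke that $\ker\pi$ is an $\alpha$-invariant closed ideal (by covariance of $\pi$), whence $\Ai\cap\ker\pi$ is dense in $\ker\pi$; density of $\Ai$ in $A$ alone does not force $\ker\pi=0$ from $\Ai\cap\ker\pi=0$.
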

In particular, if we have a concrete $C^*$-algebra $A\subset\Bi(\GH)$ equipped with a strongly continuous $\R^n$-action $\alpha$, the $C^*$-algebra generated by $\Ai^\Theta:=\{a^\Theta|\ a\in\Ai\}$ is isomorphic to the Rieffel deformation $A_\Theta$. Here and below, $\Ai\subset A$ denotes the subalgebra of $A$ which is smooth under the action. 

\begin{Example}
Suppose that the $\R^n$-action is periodic, so it can be regarded as an action of the $n$-dimensional torus $\T^n\cong\R^n/\Z^n$. Warped convolution have been used quite a lot in this setting (without identifying it with a warped convolution). If $\alpha$ is a unitarily implemented action on $\Bi(\GH)$ then $\GH$ decomposes into spectral subspaces $\GH^{(r)}$ for $r\in\Z^n$. For a $\T^n$-homogeneous operator $T\in\Bi(\GH)$ of degree $r$, i.e. $\alpha_s(T)=e^{2\pi is\cdot r}T$, the warped convolution of $T$ with respect to $(\alpha,\Theta)$ is the operator $T^\Theta$ which acts as
$$
T^\Theta\xi=e^{2\pi ir\cdot\Theta s}T\xi,\qquad\forall\, \xi\in\GH^{(s)},\ s\in\Z^n;
$$
see \cite[\S 2]{Landi2}, \cite{Yama1}. 
\end{Example}

So let $A\subset\Bi(\GH)$ be a concrete $C^*$-algebra such that the action \eqref{Stonesaction} is strongly continuous on $A$. Then $A_\Theta$ is generated by the operators
\begin{equation}\label{warping}
a^\Theta=\int_{\R^n}e^{i \Theta s\cdot P}a e^{-i\Theta s\cdot  P}\, dE^P(s),\qquad a\in\Ai.
\end{equation}
Whenever we are discussing Rieffel deformations we have a $C^*$-dynamical system $(A,\R^n,\alpha)$. Recall that the $C^*$-algebraic crossed product $B:=A\rtimes_\alpha\R^n$ acts on the Hilbert space $L^2(\R^n,\GH)$ if $A\subset\Bi(\GH)$. Let $t\to\lambda_t=e^{-2\pi it\cdot D}$ be a unitary implementation of $\alpha$ in $L^2(\R^n,\GH)$. Under the embedding $\pi_\alpha:A\to\Mi(B)$ of $A$ into the multiplier algebra of the crossed product $B:=A\rtimes_\alpha\R^n$ we have
$$
\pi_\alpha(\alpha_t(a))=\pi_\alpha(e^{is\cdot P}a e^{-is\cdot  P})=\lambda_t^*\pi_\alpha(a)\lambda_t=e^{2\pi it\cdot D}\pi_\alpha(a)e^{-2\pi it\cdot D}.
$$
Identifying $A_\Theta$ with its concrete image in $\Bi(\GH)$ (the $C^*$-algebra generated by the warpings $a^\Theta$), the $C^*$-dynamical system $(A_\Theta,\R^n,\alpha^\Theta)$ gives rise to a crossed product $B_\Theta:=A_\Theta\rtimes_{\alpha^\Theta}\R^n$ which is represented on the same space $L^2(\R^n,\GH)$ by the map $\pi_{\alpha^\Theta}:A_\Theta\to\Mi(B_\Theta)$. For $T\in A_\Theta$, the operator $\pi_{\alpha^\Theta}(T)$ is given by pointwise multiplication by the operator-valued function $\alpha^\Theta(T)$,
$$
\pi_{\alpha^\Theta}(T)\xi=\alpha^\Theta(T)\xi,\qquad \forall\,\xi\in L^2(\R^n,\GH).
$$
In particular, for a warping $a^\Theta\in\Ai_\Theta$ and a nice vector $\xi\in\Si^A(\R^n,\GH)$ we have
\begin{align*}
(\pi_{\alpha^\Theta}(a^\Theta)\xi)(t)&=(\alpha^\Theta(a^\Theta)\xi)(t)
\\&=(\alpha(a)\times_\Theta \xi)(t)
\\&:=\iint_{\R^n\times\R^n}\alpha_{-t+\Theta z}(a)\xi(t+s)e^{2\pi i z\cdot s}\, dz\, ds
\\&=:(\pi^\Theta(a)\xi)(t),
\end{align*}
so that $\pi_{\alpha^\Theta}(a^\Theta)$ is the operator of ``left Rieffel multiplication" by the function $\alpha(a)$. Recall that  $\pi^\Theta$ appeared also in Definition \ref{defofdef} as a representation of $A_\Theta$ on the Hilbert $A$-module $X$. If we use the identification of the internal tensor product $X\otimes_A\GH$ with $L^2(\R^n,\GH)$ then $\pi_{\alpha^\Theta}$ is the representation on $L^2(\R^n,A)$ induced by $\pi^\Theta:A\to\Li_A(X)$. We will sometimes identify $\pi_{\alpha^\Theta}$ and $\pi^\Theta$ in this way.


\begin{Remark}\label{allinclusive} Note that for $\Theta=0$ (the zero matrix) we have (rewriting the Rieffel product slightly using the Fourier transform)
\begin{align*}
(\pi^\Theta(a)\xi)(t)&=\int_{\R^n}\alpha_{-t+\Theta s}(a)\hat{\xi}(s)e^{2\pi is\cdot t}\, ds
\\&=\alpha_{-t}(a)\int_{\R^n}\hat{\xi}(s)e^{2\pi is\cdot t}\, ds
\\&=\alpha_{-t}(a)\xi(t)
\\&=(\pi_\alpha(a)\xi)(t).
\end{align*}
That is,
$$
\pi^{\Theta\, =\, 0}=\pi_\alpha,
$$
so that when we discuss $\pi^\Theta$ with general skew-symmetric $\Theta$ we automatically include the case $\pi_\alpha$.
\end{Remark}
The operator $\pi^\Theta(a)$ acts by left Rieffel mutliplication with $\alpha(a)$. Since $\pi_{\alpha}(a)$ is the operator of left multiplication with $\alpha(a)$, this means that 
$$
\pi^\Theta(a)=\pi_\alpha(a)^\Theta
$$
is the warped convolution of $\pi_\alpha(a)$ with respect to $(\alpha,\Theta)$, and we have an example of Lemma \ref{BLSlemma} with $\pi=\pi_\alpha$. 
Since $\pi_\alpha(a)$ acts on $L^2(\R^n,\GH)$, the warped convolution \eqref{warping} takes the form
\begin{equation}\label{warpingatltwo}
\pi_\alpha(a)^\Theta=\int_{\R^n}e^{2\pi i \Theta s\cdot D}\pi_\alpha(a)e^{-2\pi i\Theta s\cdot  D}\, dE^D(s).
\end{equation}
We recollect these observations.
\begin{thm}
Let $(A,\R^n,\alpha)$ be a $C^*$-dynamical system and let $\Theta$ be a real skew-symmetrix $n\times n$ matrix. Then the operator $\pi_\alpha(a^\Theta)$ on $L^2(\R^n,\GH)$ is the warped convolution of $\pi_\alpha(a)\in\Ai$ using generators $D$ and matrix $\Theta$. 
\end{thm}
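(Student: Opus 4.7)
The plan is to verify the claim on a dense subspace by unwinding the two oscillatory integral expressions and comparing integrands; the discussion immediately preceding the theorem already carries out most of the work, so the statement really just records the outcome.

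First, I would restrict attention to the dense domain $\Si^A(\R^n,\GH)\subset L^2(\R^n,\GH)$. On this domain, Definition \ref{defofdef} combined with the identification $X\otimes_A\GH\cong L^2(\R^n,\GH)$ shows that $\pi^\Theta(a)$ acts as left Rieffel multiplication by the $\Bi^A(\R^n)$-element $\alpha(a):u\mapsto\alpha_{-u}(a)$:
\[
(\pi^\Theta(a)\xi)(t)=\iint_{\R^n\times\R^n}\alpha_{\Theta z-t}(a)\,\xi(t+s)\,e^{2\pi i z\cdot s}\,dz\,ds.
\]
On the other side, using the covariance relation $e^{2\pi i u\cdot D}\pi_\alpha(a)e^{-2\pi i u\cdot D}=\pi_\alpha(\alpha_u(a))$, the warped convolution \eqref{warpingatltwo} rewrites as $\int\pi_\alpha(\alpha_{\Theta s}(a))\,dE^D(s)$. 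Here $\pi_\alpha(\alpha_{\Theta s}(a))$ is pointwise multiplication by $\alpha_{\Theta s-t}(a)$, and the joint spectral measure $dE^D$ is realised on Schwartz vectors via Fourier inversion with the unitaries $e^{-2\pi i u\cdot D}$ acting as translations on $L^2(\R^n,\GH)$. Inserting this Fourier inversion for $\xi$ produces a double oscillatory integral whose integrand agrees with the Rieffel one above, after a trivial change of variables using skew-symmetry of $\Theta$ to align the sign in the oscillatory kernel.

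The main technical obstacle is justifying the interchange of the iterated integrals and the convergence of the oscillatory integrals themselves. For this I would appeal directly to Rieffel's machinery in \cite[\S1--4]{Rie1} on the left-hand side and to the parallel treatment in \cite[\S2]{BLS} on the right-hand side, both of which are designed precisely for the $\alpha$-smooth/Schwartz setting we are in. Once the integrands are matched on $\Si^A(\R^n,\GH)$, and since both operators are bounded (by Definition \ref{defofdef} and Lemma \ref{BLSlemma} respectively), the identity extends by continuity to all of $L^2(\R^n,\GH)$, proving the theorem.
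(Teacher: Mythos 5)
Your proof addresses the same mathematical content as the paper's implicit argument: the paper "recollects" the preceding discussion, which establishes (i) that $\pi_\alpha(a^\Theta)$ coincides with the Rieffel-multiplication operator $\pi^\Theta(a)$ on $L^2(\R^n,\GH)$, via the chain $\pi_\alpha(a^\Theta)=\pi_{\alpha^\Theta}(a^\Theta)=\pi^\Theta(a)$, and (ii) that $\pi^\Theta(a)=\pi_\alpha(a)^\Theta$, on the grounds that warping the multiplication operator with the $D$-action produces Rieffel multiplication. Your proof is, in effect, a more computational verification of step (ii), unwinding the spectral measure $dE^D$ via Fourier inversion and matching it against the double oscillatory integral in Definition \ref{defofdef}; the paper instead appeals to the abstract principle of Lemma \ref{BLSlemma}. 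Either route buys the same conclusion, so the approach is essentially the same, with yours supplying the integrand comparison that the paper leaves implicit.

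Two small remarks worth flagging. First, your argument is phrased entirely in terms of $\pi^\Theta(a)$ rather than the operator $\pi_\alpha(a^\Theta)$ appearing in the statement; you invoke step (i) only by reference to the "discussion immediately preceding the theorem," and it would be cleaner to record explicitly that $\pi_\alpha(a^\Theta)=\pi^\Theta(a)$ is what lets you transfer the comparison. Since this is precisely the identification the paper makes in the display with $\pi_{\alpha^\Theta}(a^\Theta)$, no new idea is needed, but as written the proof establishes $\pi^\Theta(a)=\pi_\alpha(a)^\Theta$ and tacitly quotes the missing half. Second, after applying the covariance relation the change of variables that aligns the two oscillatory kernels is simply $u\mapsto -u$ in the translation variable and does not actually use skew-symmetry of $\Theta$; the paper's Remark \ref{allinclusive} confirms this when it recovers $\pi^{\Theta=0}=\pi_\alpha$ without touching $\Theta$. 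Attributing the sign-matching to skew-symmetry is a minor misdirection, though it does not affect the validity of the overall argument.
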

Using $\pi^\Theta(a)=\pi_{\alpha^\Theta}(a^\Theta)=\pi_{\alpha}(a^\Theta)$, we will be able to obtain a formula for the index of operators of the form $\slashed{P}\pi^\Theta(u)\slashed{P}$ in terms of the warpings $u^\Theta\in\Ai_\Theta\subset\Bi(\GH)$. 
 
Index pairings for Rieffel deformations were the original motivation for considering crossed products. The idea was inspired by the third picture of Rieffel deformations, which we recall next.

\subsubsection{Kasprzak deformations}
Let $(A,\R^n,\alpha)$ be a $C^*$-dynamical system. Denote by $\hat{\alpha}:\R^n\to\Aut(B)$ the dual action on the crossed product $B:=A\rtimes_\alpha\R^n$. In Kasprzak's approach to Rieffel deformations, the deforming parameter is (a priori) not a matrix $\Theta$ but a continuous $2$-cocycle 
$$
\Phi:\R^n\times\R^n\to\Un(1)
$$
on the group $\R^n$ with values in the circle group $\Un(1)$. For later comparison we shall label the upcoming deformed objects by $\Theta$ and not by $\Phi$. For each $t\in\R^n$ we have the function $\Phi_t(s):=\Phi(t,s)$ on $\R^n$. Then $\lambda(\Phi_t)$ is an element of $\Mi(B)$, where $\lambda:L^1(\R^n)\to\Mi(B)$ is the embedding. Kasprzak noticed \cite[Thm. 3.1]{Kas} that 
\begin{equation}\label{defdualaction}
\hat{\alpha}^\Theta_t(T):=\lambda(\Phi_t)^*\hat{\alpha}_t(T)\lambda(\Phi_t), \qquad\forall\, T\in B
\end{equation}
defines a strongly continuous action of $\R^n$ on $B$. Moreover, 
$$
\hat{\alpha}^\Theta_t(\lambda_s)=e^{2\pi its}\lambda_s,\qquad \forall\, s,t\in\R^n,
$$
just as the original dual action $\hat{\alpha}$. The idea is now to apply Landstad's theory of crossed products \cite[\S7.8]{Ped1}. 
\begin{dfn}
Let $(A,\R^n,\alpha)$ be a $C^*$-dynamical system and let $\Phi$ be a $2$-cocycle on $\R^n$. The \textbf{Kasprzak deformation}  of $A$ with respect to $(\alpha,\Phi)$ is the Landstad $C^*$-algebra $A_\Theta$ of the $\R^n$-product $(A\rtimes_\alpha\R^n,\lambda,\hat{\alpha}^\Theta)$.
\end{dfn}
Consequently, the Kasprzak deformation $A_\Theta$ satisfies
\begin{equation}\label{crossisom}
A_\Theta\rtimes_{\alpha^\Theta}\R^n\cong A\rtimes_\alpha\R^n
\end{equation}
where $\alpha^\Theta$ is the ``same" action as $\alpha$ but on a different algebra (namely on $A_\Theta$ instead of $A$). 
The algebra $A_\Theta$ was called the ``Rieffel deformation" of $A$ \cite[\S3]{Kas}. 


Kasprzak formulated his deformation for locally compact Abelian groups (not necessarily $\R^n$) \cite{Kas} and his approach extend to not necessarily Abelian groups \cite{BNS} and even to locally compact quantum groups \cite{NT1}.

\subsubsection{Comparison of deformations}\label{deform}
Having described three different ways of deforming a $C^*$-algebra equipped by an $\R^n$-action we now show that it is possible to pass from one to another.

Since Kasprzak used the term ``Rieffel deformation" in his approach, several workers tried to elucidate the relation to Rieffel's deformation by actions of $\R^n$ and the Kasprzak deformation \cite{HM}, \cite{Sang1}, \cite{BNS}, with some success. It was shown in \cite{Ne} that the deformed algebra $A_\Theta$ of Rieffel's satisfies \eqref{crossisom} and is isomorphic to the Kasprzak deformation of $A$ for a canonical choice of $2$-cocycle $\Phi$, whence the notation $A_\Theta$ for both Rieffel and Kasprzak deformations.

That is, if $(A,\R^n,\alpha)$ is a $C^*$-dynamical system and $A_\Theta$ is a Rieffel deformation of $A$ for some choice of matrix $\Theta$, then the result of \cite{Ne} is that the crossed products $B:=A\rtimes_\alpha\R^n$ and $B_\Theta:=A_\Theta\rtimes_\alpha\R^n$ are isomorphic. On the level of smooth crossed products \cite{ENN1}, the explicit isomorphism $\Si^A(\R^n)\ni f \to f^\Theta\in\Si^A_\Theta(\R^n)$ which underlies \eqref{crossisom} is given by \cite{Ne}
\begin{align}\label{neshformula}
f^\Theta(t):=\int_{\R^n}\alpha_{\Theta s}(\hat{f}(s))e^{2\pi it\cdot s}\, ds.
\end{align}
We denote by $\hat{\pi}_\alpha$ and $\hat{\pi}^\Theta$ the representations of $B$ and $B_\Theta$ induced by $\pi_\alpha$ and $\pi^\Theta$ respectively. The important relation is \cite{Ne}
$$
\hat{\pi}^\Theta(f)=\hat{\pi}_\alpha(f^\Theta),\qquad \forall\, f\in\Si^\Ai(\R^n)
$$
where $f$ on the left-hand side is viewed as an element of $B_\Theta$ and on the right-hand side as $f\in B$. 

The notation $f^\Theta$ is used here to stress the similarity with warped convolution. The function $f^\Theta$ defined in \eqref{neshformula} is the Fourier transform of $s\to\alpha_{\Theta s}(\hat{f}(s))$ so in the spectral representation of the $D_k$'s, the operator $\hat{\pi}_{\alpha}(f^\Theta)$ acts as multiplication by the function $s\to\pi_\alpha\big(\alpha_{\Theta s}(\hat{f}(s))\big)$. So 
\begin{align*}
\hat{\pi}_\alpha(f^\Theta)&=\int_{\R^n}\pi_\alpha\big((f^\Theta(t))\big)e^{-2\pi it\cdot D}\, dt
\\&=\iint_{\R^n\times\R^n}e^{2\pi i \Theta s\cdot D}\pi_\alpha\big(\hat{f}(s)\big)e^{-2\pi i\Theta s\cdot  D}e^{2\pi it\cdot s}e^{-2\pi it\cdot D}\, ds\, dt,
\end{align*}
and if $E^D(s)$ is the spectral measure of $D$ then we can write
\begin{align*}
\hat{\pi}_\alpha(f^\Theta)&=\iint_{\R^n\times\R^n}\pi_\alpha\big((f^\Theta(t))\big)e^{-2\pi it\cdot s}\, dE^D(s)\, dt
\\&=\int_{\R^n}e^{2\pi i \Theta s\cdot D}\pi_\alpha\big(\hat{f}(s)\big)e^{-2\pi i\Theta s\cdot  D}\, dE^D(s).
\end{align*}
Recall now \eqref{warpingatltwo}, which says that (under the identification $\alpha^\Theta=\alpha$)
$$
\pi_\alpha(a^\Theta)=\int_{\R^n}e^{2\pi i \Theta s\cdot D}\pi_\alpha(a) e^{-2\pi i\Theta s\cdot  D}\, dE^D(s).
$$
Thus the notion of warped convolution extends to the crossed product by means of the formula \eqref{neshformula}. By considering $\pi_\alpha(A)$ instead of $A$ we can use the isomorphism $B_\Theta\cong B$ etc., and things simplify. The idea is thus to obtained a local formula for Fredholm operators related to the warped convolutions  $\pi_\alpha(a^\Theta)$ by viewing the operator $\pi_\alpha(a^\Theta)$ as a multiplier of the crossed product.  

The relation $\pi_\alpha(a^\Theta)=\pi^\Theta(a)$ is the multiplier analogue of the relation \eqref{neshformula}. Note that this gives
\begin{equation}\label{Neshrelation}
\pi_\alpha(a^\Theta b^\Theta)=\pi_\alpha(a^\Theta)\pi_\alpha(b^\Theta)=\pi^\Theta(a)\pi^\Theta(b)=\pi^\Theta(a\times_\Theta b)=\pi_\alpha((a\times_\Theta b)^\Theta).
\end{equation}

In the following theorem we consider warped convolution with respect to $(\alpha,\Theta)$ for unbounded operators $T$ acting on $L^2(\R^n,\GH)$ and denote by $T^\Theta$ the resulting operator. For the proof, cf. \cite{An1, An2, Mu1}.
\begin{prop}\label{dualgenerators}
Let $X_1,\dots,X_n$ denote the generators of the unitary group implementing the dual action $\hat{\alpha}$ on $B$. Then for all $j\in\{1,\dots,n\}$, as operators with domain $\Si(\R^n;\GH)$ we have
$$
X_j^\Theta=X_j+2\pi \sum^n_{k=1}\Theta_{j,k}D_k.
$$
\end{prop}
We have yet to mention what cocycle should be used in Kasprzak's deformation to obtain the Rieffel deformation $A_\Theta$. If $\Phi:\R^n\times \R^n\to\Un(1)$ is a $2$-cocycle, we let $\Phi_t:\R^n\to\Un(1)$ be the function $\Phi_t(s):=\Phi(t,s)$. As before we have the multiplier $\lambda(\Phi_t)$ of the crossed product $B$. 
\begin{prop}[{\cite[Thm. 2.3]{Ne}}]\label{Kaspcocycl}
Let $(A,\R^n,\alpha)$ be a $C^*$-dynamical system and let $\Theta$ be a real skew-symmetrix $n\times n$ matrix. Let $\Phi:\R^n\times \R^n\to\Un(1)$ be the $2$-cocycle $\Phi(t,s):=e^{-2\pi it\cdot\Theta s}$, so that
$$
\lambda(\Phi_t)=e^{-2\pi i t\cdot \Theta D}.
$$ 
Then the Kasprzak deformation of $A$ by $(\alpha,\Phi)$ is isomorphic to the Rieffel deformation of $A$ by $(\alpha,\Theta)$.
\end{prop}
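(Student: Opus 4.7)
The plan is to derive the identification by combining Neshveyev's isomorphism formula \eqref{neshformula} with the uniqueness part of Landstad's duality theorem. With the specified cocycle $\Phi(t,s) = e^{-2\pi it\cdot\Theta s}$, one has $\lambda(\Phi_t) = e^{-2\pi it\cdot\Theta D}$ as a multiplier of $B$ (interpreted via functional calculus in $D$, since $\lambda_s = e^{-2\pi is\cdot D}$). By \eqref{defdualaction}, the Kasprzak-deformed dual action on $B$ reads
\begin{equation*}
\hat{\alpha}^\Theta_s(T) = e^{2\pi is\cdot\Theta D}\hat{\alpha}_s(T)e^{-2\pi is\cdot\Theta D},
\end{equation*}
and by definition the Kasprzak deformation is the Landstad algebra of the $\R^n$-product $(B,\lambda,\hat{\alpha}^\Theta)$.

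On the other hand, writing $A_\Theta$ for the Rieffel deformation, one has the crossed product $A_\Theta\rtimes_{\alpha^\Theta}\R^n$ together with its own dual action $\hat{\alpha^\Theta}$ and embedding of $C^*(\R^n)$. The key step is to verify that the Neshveyev map $\Psi(f) := f^\Theta$ defined in \eqref{neshformula} gives an isomorphism of $\R^n$-products from $(A_\Theta\rtimes_{\alpha^\Theta}\R^n,\lambda,\hat{\alpha^\Theta})$ onto $(B,\lambda,\hat{\alpha}^\Theta)$. That $\Psi$ extends to a $C^*$-algebra isomorphism is \cite{Ne}. What remains is: (a) $\Psi$ intertwines the two copies of $\lambda$, and (b) $\Psi\circ\hat{\alpha^\Theta}_s = \hat{\alpha}^\Theta_s\circ\Psi$ for every $s\in\hat{\R}^n$. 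For (a), note that if $f(t) = g(t)\bone$ is scalar-valued, then $\hat f(s) = \hat g(s)\bone$ and $\alpha_{\Theta s}(\hat f(s)) = \hat f(s)$, so $f^\Theta = f$. For (b) one computes both sides on a Schwartz function: on the left, $(\hat{\alpha^\Theta}_s f)(t) = e^{-2\pi is\cdot t}f(t)$, and a change of variable in the Fourier integral defining $(\hat{\alpha^\Theta}_s f)^\Theta$ yields the function $t\mapsto e^{-2\pi is\cdot t}\alpha_{-\Theta s}(f^\Theta(t))$; on the right, the conjugation by $e^{\pm 2\pi is\cdot\Theta D}$ in $\hat{\alpha}^\Theta_s$ converts $\pi_\alpha(f^\Theta(t))$ into $\pi_\alpha(\alpha_{-\Theta s}(f^\Theta(t)))$, and the undeformed $\hat{\alpha}_s$ part contributes the phase $e^{-2\pi is\cdot t}$, giving the same answer.

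With (a) and (b) verified on a dense subalgebra and extended by continuity, $\Psi$ is an isomorphism of $\R^n$-products. Landstad's theorem \cite[Thm. 7.8.8]{Ped1} then asserts that the underlying $C^*$-dynamical systems --- and in particular the Landstad algebras themselves --- are canonically isomorphic, yielding the asserted identification of the Kasprzak and Rieffel deformations. The main technical obstacle is the Fourier bookkeeping in step (b), together with the continuity extension from the Schwartz dense subalgebra: the sign conventions have to be chosen so that the Kasprzak deformation matches the Rieffel deformation by $\Theta$ itself rather than by $-\Theta$ or some rescaling, which is precisely why the minus sign appears in $\Phi(t,s) = e^{-2\pi it\cdot\Theta s}$.
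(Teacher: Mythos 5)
The paper does not prove this proposition at all — it is quoted directly from Neshveyev \cite[Thm.~2.3]{Ne}, with no argument supplied in the text. So there is no ``paper's own proof'' to compare against; what you have written is your reconstruction of how the cited theorem is established.

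Your reconstruction is correct and is, as far as I can tell, essentially how the argument must go: take the crossed-product isomorphism $\Psi: f \mapsto f^\Theta$ from \eqref{neshformula}, check that it is an isomorphism of $\R^n$-products $(A_\Theta\rtimes_{\alpha^\Theta}\R^n,\lambda,\widehat{\alpha^\Theta}) \to (B,\lambda,\hat{\alpha}^\Theta)$, and then invoke the uniqueness clause of Landstad's theorem to identify the two Landstad algebras, i.e.\ $A_\Theta$ (Rieffel) with the Kasprzak deformation. Your check (a) that scalar-valued functions are fixed by $f\mapsto f^\Theta$ (so $\Psi$ is the identity on $\lambda(C^*(\R^n))$) is right, and your Fourier bookkeeping in (b) is correct: writing out $(\widehat{\alpha^\Theta}_sf)^\Theta(t)$, one has $\widehat{(\widehat{\alpha^\Theta}_sf)}(p)=\hat f(p+s)$, and the substitution $q=p+s$ gives $e^{-2\pi is\cdot t}\alpha_{-\Theta s}(f^\Theta(t))$, which matches the conjugation by $e^{2\pi is\cdot\Theta D}=e^{-2\pi i\Theta s\cdot D}$ on the right-hand side; note you are implicitly using the skew-symmetry $t\cdot\Theta s=-\Theta t\cdot s$ to get the correct sign, which is fine but worth flagging explicitly.

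One caveat worth stating: you take for granted from \cite{Ne} that $\Psi$ is a $C^*$-algebra isomorphism, while the statement you are proving is also cited from \cite{Ne}. This is not a logical circularity provided that Neshveyev establishes the crossed-product isomorphism \eqref{neshformula} before, and independently of, the Landstad identification; that is indeed the structure of his paper, and your Landstad checks (a) and (b) are precisely the remaining content. But if the intention were a self-contained proof, one would need to redo the oscillatory-integral estimates showing that $f\mapsto f^\Theta$ respects the norms and products of the two crossed products — that is the genuinely hard analytic step, and your sketch, by deferring it to \cite{Ne}, leaves it out.
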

Combining Proposition \ref{Kaspcocycl} with Proposition \ref{dualgenerators} we conclude yet another relation between warping and Kasprzak deformation.
\begin{cor}\label{corwarpKasprz}
The Kasprzak deformation $A_\Theta$ of $A$ is obtained by replacing the generators $X_1,\dots,X_n$ of the dual action $\hat{\alpha}:\R^n\to\Aut(B)$ by their warped convolutions $X_1^\Theta,\dots,X_n^\Theta$.
\end{cor}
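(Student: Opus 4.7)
The plan is to combine Proposition \ref{Kaspcocycl} with Theorem \ref{dualgenerators} directly. By definition, the Kasprzak deformation $A_\Theta$ is the Landstad algebra of the $\R^n$-product $(B,\lambda,\hat{\alpha}^\Theta)$, where $\hat{\alpha}^\Theta$ is the cocycle-deformed dual action from \eqref{defdualaction}. Since the Landstad construction depends on $\hat{\alpha}^\Theta$ only through its unitary implementation, it suffices to show that $\hat{\alpha}^\Theta$ is implemented by the warped generators $X_1^\Theta,\dots,X_n^\Theta$.

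First, I would apply Proposition \ref{Kaspcocycl} to identify the relevant cocycle as $\Phi(t,s) = e^{-2\pi i t\cdot\Theta s}$, giving $\lambda(\Phi_t) = e^{-2\pi i t\cdot\Theta D}$. Writing the original dual action as $\hat{\alpha}_t = \Ad(e^{2\pi i t\cdot X})$ and substituting into \eqref{defdualaction} yields
\[
\hat{\alpha}^\Theta_t(T) = e^{2\pi i t\cdot\Theta D}\, e^{2\pi i t\cdot X}\, T\, e^{-2\pi i t\cdot X}\, e^{-2\pi i t\cdot\Theta D},
\]
so $\hat{\alpha}^\Theta$ is implemented by the product $U_t := e^{2\pi i t\cdot\Theta D} e^{2\pi i t\cdot X}$.

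Next, I would merge the two exponentials into one. The Weyl-type relation $\hat{\alpha}_s(\lambda_t) = e^{-2\pi i s\cdot t}\lambda_t$ forces the canonical commutator $[X_j,D_k]$ to be a scalar multiple of $\delta_{jk}\bone$. Consequently $[\,t\cdot\Theta D,\; t\cdot X\,]$ is proportional to the quadratic form $\sum_{j,k} t_j\Theta_{j,k}t_k$, which vanishes identically because $\Theta$ is skew-symmetric. The commutator is therefore central and zero, so the Baker-Campbell-Hausdorff correction is trivial and the two exponentials collapse to a single exponential whose generator is a linear combination of the $X_j$'s and $D_k$'s.

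Finally, Theorem \ref{dualgenerators} identifies this combined generator with the warped convolution: $X_j^\Theta = X_j + 2\pi\sum_k\Theta_{j,k}D_k$. Hence $U_t = e^{2\pi i t\cdot X^\Theta}$ and $\hat{\alpha}^\Theta_t = \Ad(U_t)$ is indeed implemented by the warped generators. Thus the Kasprzak deformation $A_\Theta$ is the Landstad algebra of $(B,\lambda,\hat{\alpha}^\Theta)$ with $\hat{\alpha}^\Theta$ generated by $X_1^\Theta,\dots,X_n^\Theta$, i.e., it is obtained from the undeformed setup by the substitution $X_j\mapsto X_j^\Theta$. The only nontrivial ingredient is the skew-symmetry of $\Theta$, which removes the would-be BCH obstruction to merging the cocycle exponential with the original implementing unitary; the rest is bookkeeping of the $2\pi$-conventions so that the combined generator matches the formula in Theorem \ref{dualgenerators}.
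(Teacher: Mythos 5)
Your proposal is correct and takes essentially the same route as the paper's proof, which is terser: it simply asserts that the Heisenberg relations $[X_j,D_k]=\sqrt{-1}\,\delta_{jk}$ imply the cocycle intertwining the unitary groups generated by $X$ and $X^\Theta$ is $s\mapsto e^{-is\cdot\Theta D}$, and that this is precisely the deformation \eqref{defdualaction}. The Baker--Campbell--Hausdorff merging and the use of skew-symmetry of $\Theta$ that you make explicit are exactly the hidden content of that one-line observation (the residual discrepancies are indeed only $2\pi$-normalization conventions, which the paper itself does not track uniformly).
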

\begin{proof}
The Heisenberg commutation relations $[X_j,D_k]=\sqrt{-1}\delta_{jk}$ imply that the cocycle intertwining the unitary groups generated by $X_j$ and $X_j^\Theta$ is simply $\R^n\ni s\to e^{-is\cdot \Theta D}$. But then the transformation $X\to X^\Theta$ is the Kasprzak approach because changing the dual action as in \eqref{defdualaction} corresponds exactly to the addition of the terms $\Theta_{j,k}D_k$.
\end{proof}

\subsection{Deformed index pairings}
Let $(A,\R^n,\alpha)$ be a $C^*$-dynamical system. For any real skew-symmetric $n\times n$ matrix $\Theta$, the automorphisms $\alpha_t$ act as automorphisms also for the new multiplication on $\Ai_\Theta$ \cite[Prop. 2.5]{Rie1}. As mentioned in Remark \ref{Riefremarks}, $\alpha:\R^n\to\Aut(\Ai_\Theta)$ extends to a strongly continuous action $\alpha^\Theta$ on $A_\Theta$, so we have a new $C^*$-dynamical system $(A_\Theta,\R^n,\alpha^\Theta)$. Moreover, if $\tau$ is an $\alpha$-invariant faithful trace on $A$ then $\tau$ induces an $\alpha^\Theta$-invariant faithful trace on $A_\Theta$ \cite[Thm. 4.1]{Rie2}. We can therefore apply the results of the last section to the deformed system $(A_\Theta,\R^n,\alpha^\Theta)$. 

On the other hand, we need to choose a Hilbert-space representation of $A_\Theta$ in which $\alpha^\Theta$ is unitarily implemented. Warped convolutions allow us to use any representation of the undeformed algebra $A$ in which $\alpha$ is unitarily implemented. In that way, we can use the undeformed embedding $\pi_\alpha:A\to\Mi(B)$ to pass to crossed products. Doing so there might be a chance of obtaining a formula for the index of $\slashed{P}\pi^\Theta(u)\slashed{P}$ in terms of the warped convolution $u^\Theta$. By staying in the original representation $\pi_\alpha$ we could use the relation $\pi^\Theta(a)=\pi_\alpha(a^\Theta)$. This works well, except for the fact that $a^\Theta$ is only defined as a multiplier of $\Si^A(\R^n)$. We will indicate the required modifications in \S\ref{defnumer}.

\subsubsection{The deformed Thom element}
We can use Corollary \ref{corwarpKasprz} to deduce a representative of the Thom element for $(A_\Theta,\R^n,\alpha^\Theta)$ in terms of that of $(A,\R^n,\alpha)$. 

\begin{cor}
The Thom element $\hat{\mathbf{t}}_\alpha^\Theta$ of the dynamical system $(B,\hat{\R}^n,\hat{\alpha}^\Theta)$ is represented by the operator
$$
\slashed{X}^\Theta=\sum^n_{j,k=1}\gamma^k(X_k+2\pi\Theta_{j,k} D_k).
$$
Let $\mathbf{t}_\alpha^\Theta$ be the Thom element for $(A_\Theta,\R^n,\alpha^\Theta)$. Then
$$
\mathbf{t}_\alpha^\Theta\otimes_B\hat{\mathbf{t}}_\alpha^\Theta=1_{A_\Theta}.
$$
\end{cor}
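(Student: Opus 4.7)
My plan is to reduce both assertions to results already established earlier in the paper, with the explicit form of $\slashed{X}^\Theta$ coming from the warping picture of the deformation and the normalization identity coming from the general $KK$-equivalence theorem applied to the deformed dynamical system.

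For the explicit representative of $\hat{\mathbf{t}}_\alpha^\Theta$: Corollary \ref{corwarpKasprz} tells us that, at the level of the common crossed product $B$, passing from $(A,\R^n,\alpha)$ to its Rieffel/Kasprzak deformation $(A_\Theta,\R^n,\alpha^\Theta)$ amounts to replacing the generators $X_1,\dots,X_n$ of the original dual action $\hat{\alpha}$ by their warped convolutions. Theorem \ref{dualgenerators} identifies these warped generators as $X_k^\Theta=X_k+2\pi\sum_j\Theta_{k,j}D_j$. Applying Definition \ref{defofdualthom} directly to the deformed dual dynamical system $(B,\hat{\R}^n,\hat{\alpha}^\Theta)$, the Dirac-type operator in the Kasparov module representative of $\hat{\mathbf{t}}_\alpha^\Theta=\mathbf{t}_{\hat{\alpha}^\Theta}$ is obtained from $\slashed{X}=\sqrt{-1}\sum_k\hat{\gamma}^k X_k$ by the substitution $X_k\mapsto X_k^\Theta$, yielding the expression $\slashed{X}^\Theta$ given in the statement (up to the conventions identifying $\hat{\gamma}^k$ with $\sqrt{-1}\gamma^k$).

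For the normalization identity $\mathbf{t}_\alpha^\Theta\otimes_B\hat{\mathbf{t}}_\alpha^\Theta=1_{A_\Theta}$: The proof given in \S\ref{Extsection} that $\mathbf{t}_\alpha$ is a $KK$-equivalence with inverse $\hat{\mathbf{t}}_\alpha$ is entirely general and uses only that $A$ is separable and that $\alpha$ is strongly continuous. By Remark \ref{Riefremarks}(ii) the deformed action $\alpha^\Theta$ is strongly continuous, and $A_\Theta$ is separable since it is the completion of a subalgebra of the separable algebra $A$ (in the deformed norm $\|\cdot\|_\Theta$). The same proof therefore yields $\mathbf{t}_\alpha^\Theta\otimes_{B_\Theta}\hat{\mathbf{t}}_\alpha^\Theta=1_{A_\Theta}$ in $KK^0(A_\Theta,A_\Theta)$, and under the isomorphism $B_\Theta\cong B$ of \eqref{crossisom} this is precisely the claimed equality.

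The step I would scrutinise most carefully is verifying that the Kasparov module built from the unbounded operator $\slashed{X}^\Theta$ on the same underlying Hilbert space as $\slashed{X}$ genuinely represents the class abstractly defined by Definition \ref{defofdualthom} for $(B,\hat{\R}^n,\hat{\alpha}^\Theta)$. This reduces to confirming that the $X_k^\Theta$ are jointly essentially selfadjoint and exponentiate to the unitary implementation of $\hat{\alpha}^\Theta$ (which is exactly the content of Theorem \ref{dualgenerators} combined with Corollary \ref{corwarpKasprz}), and that the resulting triple satisfies the Kasparov axioms (bounded commutators with the representation of $B$, compactness of $\boldsymbol\pi_B(b)((\bone+(\slashed{X}^\Theta)^2)^{-1/2})$). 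These properties follow along exactly the same lines as Lemma \ref{Riesmultiplier} and Proposition \ref{crucialcorrollary}, now applied in the deformed setting, since $\slashed{X}^\Theta$ has the same algebraic structure as $\slashed{X}$ with respect to the Clifford generators.
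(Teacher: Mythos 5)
Your proposal is correct and follows essentially the same two-step route as the paper: the first assertion comes directly from Corollary~\ref{corwarpKasprz} together with Theorem~\ref{dualgenerators}, and the second comes from applying the general $KK$-equivalence theorem of \S\ref{Extsection} to the deformed system and then transporting through the isomorphism $B_\Theta\cong B$ of~\eqref{crossisom}. The only point your write-up leaves implicit is the step the paper makes explicit: that under the isomorphism $f\mapsto f^\Theta$ of~\eqref{crossisom} the dual action $\hat{\alpha}^\Theta$ on $B$ is intertwined with $\widehat{\alpha^\Theta}$ on $B_\Theta$ (which the paper cites from Neshveyev, and which is exactly what is needed to identify $\hat{\mathbf{t}}_{\alpha^\Theta}\in KK(B_\Theta,A_\Theta)$ with the class $\hat{\mathbf{t}}_\alpha^\Theta\in KK(B,A_\Theta)$ represented by $\slashed{X}^\Theta$); your observation about $A_\Theta$ being separable and $\alpha^\Theta$ strongly continuous is a useful extra detail the paper does not spell out.
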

\begin{proof} The first statement comes from Corollary \ref{corwarpKasprz}. The last statement holds because under the isomorphism $B\cong B_\Theta$ induced by $f\to f^\Theta$, the action $\hat{\alpha}^\Theta$ is intertwined with $\hat{\alpha}=\widehat{\alpha^\Theta}$ \cite[Thm. 3.3]{Ne}, and $(B_\Theta,\hat{\R}^n,\widehat{\alpha^\Theta})$ is the ordinary ``dual" Thom element of $(A_\Theta,\R^n,\alpha^\Theta)$.
\end{proof}

\subsubsection{Numerical index for $\Theta\ne 0$}\label{defnumer}
We now try to extend the local index formula from the last chapter to Rieffel deformations. Let $X$ be Hilbert $A$-module obtained by completing $\Si^A(\R^n)$ in the inner product \eqref{innerproduct}.  For nonzero $\Theta$, the best way of taking the trace of the elements $a^\Theta$ seem to be by viewing $a^\Theta$ as an adjointable operator on $X$.

There is a general construction for extending traces to operators on a given Hilbert module (see \cite[\S1]{LN}). In the present case it means that we have to replace $\tau(a^\Theta)$ for $a\in\Ai_+$ by 
\begin{equation}\label{extoftau}
\tilde{\tau}(a^\Theta):=\sup_{\Ii}\sum_{\phi\in\Ii}\tau(\bra\phi|a^\Theta\phi\ket_A),
\end{equation}
where the supremum is taken over all finite subsets $\Ii$ of $X$ for which it holds $\sum_{\phi\in\Ii}\phi\phi^*\leq \bone$, where $\phi\phi^*$ is regarded as a compact operator on $\Si^A(\R^n)$. We denote by $\tilde{\tau}$ this extension of $\tau$ to the $C^*$-algebra $\Li_A(X)$ of adjointable operators on $X$. Note that $\tilde{\tau}$ also extends the trace $\bar{\tau}:A''_+\to[0,+\infty]$.  

In the following we endow the smooth subalgebra $\Ai$ with the Fr\'{e}chet topology given by the seminorms
$$
\|a\|_m:=\sum_{k_1+\cdots+k_n\leq m}\frac{1}{k_1!\cdots k_n!}\|\delta^{k_1}_1\circ\cdots\circ\delta^{k_n}_n(a)\|,\qquad m\in\N_0
$$
where $\delta_1,\dots,\delta_n$ are the generators of the action $\alpha$. 
\begin{Lemma}\label{Riefapproxident} The $\alpha$-smooth subalgebra $\Ai$ has an approximate identity $(e_k)_{k\in\N}$ consisting of positive elements of $A$. Moreover, $(e_k)_{k\in\N}$ is a bounded approximate identity also for the deformed product $\times_\Theta$ on $\Ai$ for any $\Theta$. 
\end{Lemma}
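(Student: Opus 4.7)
The plan divides into two movements: first, constructing an $\alpha$-smooth positive approximate identity for $A$ by mollifying an arbitrary bounded approximate identity; second, showing the same construction automatically serves the $\times_\Theta$-approximate-identity property as well.

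For the construction, I would take any positive contractive approximate identity $(u_k)\subset A$ (which exists in every $C^*$-algebra) and fix, once for all, a positive $\phi\in C_c^\infty(\R^n)$ with $\int\phi\,dt=1$; then set
$$
e_k := \int_{\R^n}\phi(t)\,\alpha_t(u_k)\,dt.
$$
Each $e_k$ is positive with $\|e_k\|\leq 1$, and lies in $\Ai$ because successive $\alpha$-derivatives transfer onto $\phi$:
$$
\delta^\beta(e_k) = (-1)^{|\beta|}\int(\partial^\beta\phi)(t)\,\alpha_t(u_k)\,dt,\qquad \|\delta^\beta(e_k)\|\leq\|\partial^\beta\phi\|_1,
$$
so every $\alpha$-Fr\'{e}chet seminorm of $e_k$ is bounded uniformly in $k$. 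That $e_k a\to a$ for all $a\in A$ follows by rewriting, for $a\in\Ai$ (dense in $A$),
$$
e_k a = \int\phi(t)\,\alpha_t\!\big(u_k\,\alpha_{-t}(a)\big)\,dt,
$$
and applying dominated convergence: $u_k\,\alpha_{-t}(a)\to\alpha_{-t}(a)$ pointwise in $t$ and is uniformly bounded by $\|a\|$, so $e_k a\to\int\phi(t)\,a\,dt = a$; contractivity of $(e_k)$ extends the convergence to all of $A$, and the right-multiplication statement is symmetric.

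For the deformed part, I would first invoke Rieffel's continuity estimate \cite[Thm.~4.6]{Rie1} which bounds $\|\cdot\|_\Theta$ on $\Ai$ by a polynomial in finitely many $\alpha$-Fr\'{e}chet seminorms; since these are uniformly bounded for $(e_k)$, we obtain $\sup_k\|e_k\|_\Theta<\infty$. Granted boundedness, it suffices to verify $e_k\times_\Theta a\to a$ in $\|\cdot\|_\Theta$ for $a$ in the dense subalgebra $\Ai\subset A_\Theta$. Because each $\delta_j$ is simultaneously a derivation of $\times_\Theta$ (as $\alpha$ acts by deformed automorphisms), the Leibniz rule gives
$$
\delta^\beta(e_k\times_\Theta a - a) = \sum_{\gamma\leq\beta}\binom{\beta}{\gamma}\delta^\gamma(e_k)\times_\Theta \delta^{\beta-\gamma}(a) - \delta^\beta(a),
$$
and a second application of \cite[Thm.~4.6]{Rie1} transfers Fr\'{e}chet-seminorm convergence to $\|\cdot\|_\Theta$-convergence.

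The main obstacle is the remaining step: $\delta^\gamma(e_k)\times_\Theta b\to \delta_{\gamma,0}\cdot b$ in each Fr\'{e}chet seminorm, for fixed $b\in\Ai$. The underlying strict convergences are clean, namely $e_k\to\bone$ strictly in $\Mi(A)$ by the AI property of $(u_k)$, and $\delta^\gamma(e_k)\to 0$ strictly for $\gamma\neq 0$ because $\int\partial^\gamma\phi\,dt = 0$ by the compactness of $\supp\phi$, with the norms $\|\delta^\gamma(e_k)\|$ uniformly bounded. The delicate input is the strict continuity of left $\times_\Theta$-multiplication by fixed elements of $\Ai$ on bounded sets; this can be established either directly from the oscillatory-integral estimates of \cite[\S1--2]{Rie1}, or---more transparently---by passing to the warped-convolution representation $\pi^\Theta(a) = \pi_\alpha(a)^\Theta$ on $L^2(\R^n,\GH)$, where the product formula $S^\Theta T^\Theta = (S\times_\Theta T)^\Theta$ together with spectral dominated convergence along the joint spectrum of $D_1,\dots,D_n$ yields $\pi_\alpha(\delta^\gamma(e_k))^\Theta\,\pi_\alpha(b)^\Theta \to \delta_{\gamma,0}\,\pi_\alpha(b)^\Theta$ in operator norm. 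The right-sided statement $a\times_\Theta e_k\to a$ is entirely analogous.
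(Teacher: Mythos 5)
The paper's own proof of this lemma is a one-line citation to Rieffel's Propositions 2.17 and 2.18 in \cite{Rie1}, so what you are doing is reconstructing Rieffel's argument rather than following the paper. Your first movement is correct and is exactly Rieffel's Proposition~2.17: mollifying a positive contractive approximate identity $(u_k)$ along the action produces a positive, $\alpha$-smooth net whose Fr\'{e}chet seminorms are uniformly bounded, with $\|\delta^\beta(e_k)\|\leq\|\partial^\beta\phi\|_1$, and the dominated-convergence argument for $e_ka\to a$ is fine.

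The second movement has a genuine gap at the step you yourself call ``the main obstacle''. The warped-convolution route you offer as the ``more transparent'' alternative is circular: by Lemma~\ref{BLSlemma}, $\pi^\Theta$ is a faithful, hence isometric, $*$-representation of $A_\Theta$, and the product formula \eqref{prodrelation} gives $\pi_\alpha(\delta^\gamma(e_k))^\Theta\,\pi_\alpha(b)^\Theta=\pi^\Theta\bigl(\delta^\gamma(e_k)\times_\Theta b\bigr)$, so showing that this product tends to $0$ in operator norm on $L^2(\R^n,\GH)$ is, by definition, showing $\|\delta^\gamma(e_k)\times_\Theta b\|_\Theta\to 0$ --- the very statement you set out to prove. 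Moreover, ``spectral dominated convergence along the joint spectrum of $D$'' does not apply as stated: the warped convolution is an oscillatory integral of the adjoint action, not an absolutely convergent integral against $dE^D$, so no dominated-convergence theorem applies before one has done the integration by parts. The viable route is the first one you list: write $x\times_\Theta b=\iint\alpha_{\Theta z}(x)\alpha_s(b)\,e^{2\pi i z\cdot s}\,dz\,ds$, integrate by parts in $z$ and $s$ enough times to produce a rapidly decaying kernel against finitely many terms of the form $\alpha_{\Theta z}(\delta^\beta x)\alpha_s(\delta^{\gamma'}b)$, observe that $\delta^\beta(e_k)\to\bone$ (for $\beta=0$) and $\delta^\beta(e_k)\to 0$ (for $\beta\neq 0$) strictly with uniform bounds --- which you have correctly established via $\int\partial^\beta\phi=0$ --- so each integrand term converges in $A$-norm pointwise in $(z,s)$, and then dominated convergence finishes. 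That is Rieffel's actual proof of Proposition~2.18; you need to carry it out rather than merely list it as an alternative to the warped-convolution sketch.
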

\begin{proof} This is \cite[Props. 2.17, 2.18]{Rie1}.
\end{proof}
\begin{Lemma}\label{multipliertracecalc} Let $\tilde{\tau}$ be the extension of $\tau$ to $\Li_A(X)$ as above. Then for all $a,b\in\Ai_+$ we have
$$
\tilde{\tau}(a^\Theta b^\Theta)=\tau(a\times_\Theta b).
$$
\end{Lemma}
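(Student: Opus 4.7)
The plan is to decouple the lemma into a multiplicativity identity and a tracial evaluation. First I would verify that
$$a^\Theta b^\Theta = \pi^\Theta(a)\pi^\Theta(b) = \pi^\Theta(a \times_\Theta b) = (a \times_\Theta b)^\Theta$$
as adjointable operators on the Hilbert $A$-module $X$; this is the multiplicativity of the Rieffel representation $\pi^\Theta$. The underlying algebraic identity $\alpha(a) \times_\Theta \alpha(b) = \alpha(a \times_\Theta b)$ in $\Bi^A_\Theta(\R^n)$ is a direct computation: after writing out the defining oscillatory integral for $\alpha(a) \times_\Theta \alpha(b)$, the change of variables $(z,s)\mapsto(-z,-s)$ together with commutativity of $\alpha$ on its generators yields $\alpha_{-t}(a \times_\Theta b)$. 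The lemma thereby reduces to showing
$$\tilde\tau(\pi^\Theta(c)) = \tau(c), \qquad c := a \times_\Theta b \in \Ai.$$

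For the tracial identification, I would use the bounded approximate identity $(e_k) \subset \Ai_+$ of Lemma \ref{Riefapproxident}, normalized with $\|e_k\| \leq 1$, together with a suitable family $\{\eta_i\}$ of normalized scalar functions on $\R^n$, to build test vectors $\phi_{i,k}(t) := \eta_i(t) e_k \in X$. The admissibility condition $\sum_i |\phi_{i,k}\rangle\langle\phi_{i,k}| \leq \bone_X$ is guaranteed by $\|e_k\| \leq 1$ and the choice of $\{\eta_i\}$. A direct calculation using the explicit integral formula for $\pi^\Theta(c)$, the $\alpha$-invariance of $\tau$, and the traciality $\tau(xy) = \tau(yx)$ yields
$$\tau(\bra\phi_{i,k}|\pi^\Theta(c)\phi_{i,k}\ket_A) = \iiint \overline{\eta_i(t)}\,\eta_i(t+s)\,\tau\!\big(c\,\alpha_{t+\Theta z}(e_k^2)\big)\,e^{2\pi i z\cdot s}\,dz\,ds\,dt.$$
Passing to the limit $e_k^2 \to \bone$ (using lower semicontinuity of $\tau$) and applying the Fourier inversion $\iint e^{2\pi i z\cdot s}\,dz\,ds = \delta(s)$ reduces the expression to $\tau(c)\|\eta_i\|_2^2$; summing over a suitable family then produces $\tau(c)$ as the supremum.

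The main obstacle is evaluating the supremum defining $\tilde\tau$ so as to produce exactly $\tau(c)$ rather than over- or under-shooting. The delicate balance must match the admissibility constraint $\sum_i |\phi_{i,k}\rangle\langle\phi_{i,k}| \leq \bone_X$ against the tracial normalization $\sum_i \|\eta_i\|_2^2$; constructing the test family so that these constraints coincide is where the structure of the Rieffel product $\times_\Theta$ (and the identification of $\Ki_A(X)$ with the completion of $\Si^A_\Theta(\R^n)$ from Remark \ref{Riefremarks}) enters essentially. Justifying the interchange of $\tau$ with the oscillatory integrals is routine given $a,b \in \Ai$ (rapid decay under all $\alpha$-derivatives), and monotone convergence in the limit $e_k^2 \to \bone$ completes the argument.
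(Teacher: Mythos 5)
Your proposal follows the same two-step structure as the paper: reduce via the multiplicativity identity $a^\Theta b^\Theta = (a\times_\Theta b)^\Theta$, then identify $\tilde\tau(c^\Theta)$ with $\tau(c)$ by testing against approximate identities of the form $\phi_k(t) = f_k(t)e_k$ and invoking Fourier inversion. The one substantive deviation is that you apply $\tau$ \emph{inside} the integral and then use $\alpha$-invariance plus traciality to shuttle the $\alpha$-twist off of $c$ onto the approximate unit. The paper avoids this: it computes $\lim_k\bra\phi_k|c^\Theta\phi_k\ket_A = c$ as an equality in $A$ \emph{before} applying $\tau$, so that $\tau$ enters only at the last step and never needs to be $\alpha$-invariant. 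The paper even flags this explicitly ("even if $\tau$ is not invariant under the $\R^n$-action"); your route gives a correct proof under the standing hypotheses of Theorem \ref{headthm}, but loses that extra generality. (Also, a minor sign check: unwinding the Rieffel integral gives $\tau\big(c\,\alpha_{t-\Theta z}(e_k^2)\big)$ rather than $\alpha_{t+\Theta z}$.) The supremum/admissibility concern you raise at the end is real, but the paper's own proof is equally informal there, resting implicitly on the Laca-Neshveyev machinery cited at \eqref{extoftau}; you are not missing a step the paper supplies.
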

\begin{proof}
Let $(e_k)_{k\in\N}$ be a bounded approximate identity for $\Ai$ and let $(f_k)_{k\in\N}$ be an approximate identity for the convolution algebra $\Si(\R^n)$, where the latter implies that $f_k\geq 0$ and that the Fourier transform of $f_k$ satisfies $\hat{f_k}(0)=1$ for all $k$. Then we get an approximate identity $(\phi_k)_{k\in\N}$ for $\Si^A(\R^n)$ by setting
$$
\phi_k(t):=f_k(t)e_k,\qquad\forall t\in\R^n.
$$
For all $a\in\Ai$, 
\begin{align*}
\lim_k\bra\phi_k|a^\Theta \phi_k\ket&=\lim_k\int_{\R^n}|f_k(t)|^2e_k\, a\times_\Theta e_k\, dt
\\&=\lim_k\widehat{|f_k|^2}(0)e_k(a\times_\Theta e_k)
\\&=a
\end{align*}
where we used Lemma \ref{Riefapproxident} in the last line. Hence we get $a$ back when we apply $\tilde{\tau}$ to $a^\Theta$, even if $\tau$ is not invariant under the $\R^n$-action. On the other hand, for $a,b\in\Ai_+$ we have $a^\Theta b^\Theta=(a\times_\Theta b)^\Theta$.
\end{proof}
From the proof of Lemma \ref{multipliertracecalc} we obtain the following.
\begin{cor} The extension $\tilde{\tau}$ of $\tau$ is finite on every element in the set 
$$
\Dom(\tau)^\Theta:=\{a^\Theta|\ a\in\Dom(\tau)\}.
$$
\end{cor}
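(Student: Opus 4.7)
The approach is to exploit the identity of Lemma \ref{multipliertracecalc} together with a square-root decomposition, since the only obstacle to finiteness is to present $a^\Theta$ as an honest square $(b^\Theta)^*b^\Theta$ with $b$ in a space on which Lemma \ref{multipliertracecalc} applies.

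First I would reduce to the case $a\in\Dom(\tau)_+\cap\Ai$ and set $b:=a^{1/2}$. Since $\Ai$ is stable under smooth (and more generally continuous) functional calculus of self-adjoint elements, $b\in\Ai_+$; and since $\tau(a)<+\infty$, $b$ lies in the square-domain $\Dom^{1/2}(\tau)$. The homomorphism relation \eqref{prodrelation}, i.e.\ $S^\Theta T^\Theta=(S\times_\Theta T)^\Theta$, then gives the factorization
$$
a^\Theta=(b\times_\Theta b)^\Theta=b^\Theta\, b^\Theta=(b^\Theta)^*\, b^\Theta
$$
in $\Li_A(X)$, where I used that $b=b^*$. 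Applying Lemma \ref{multipliertracecalc} to the pair $(b,b)\in\Ai_+\times\Ai_+$ gives
$$
\tilde{\tau}(a^\Theta)=\tilde{\tau}\bigl(b^\Theta\, b^\Theta\bigr)=\tau(b\times_\Theta b).
$$
Because $\tau$ is an $\alpha$-invariant trace on $A$, the standard property of the Rieffel product $\tau(b\times_\Theta b)=\tau(b^2)=\tau(a)$ (a consequence of $\alpha$-invariance via the oscillatory-integral definition of $\times_\Theta$) closes the argument, yielding $\tilde{\tau}(a^\Theta)=\tau(a)<+\infty$.

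To extend to general $a\in\Dom(\tau)_+$ not lying in $\Ai$, I would approximate $a$ by the net $a_k:=e_k\times_\Theta a\times_\Theta e_k$ using the bounded $\Ai_\Theta$-approximate identity of Lemma \ref{Riefapproxident}; each $a_k$ is in $\Ai_+\cap\Dom(\tau)_+$, and the previous paragraph gives $\tilde{\tau}(a_k^\Theta)=\tau(a_k)\leq\tau(a)$. Lower semicontinuity of $\tilde{\tau}$ then yields $\tilde{\tau}(a^\Theta)\leq\tau(a)<+\infty$.

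The main obstacle is precisely this last approximation step: one must verify that $a_k^\Theta\to a^\Theta$ in a topology strong enough for lower semicontinuity of $\tilde{\tau}$ to apply. This rests on the normality of the Rieffel-deformed product with respect to the approximate identity, which I would deduce from \cite[Prop.~2.18]{Rie1} (already invoked in Lemma \ref{Riefapproxident}) together with the contractivity of $\pi^\Theta$. Under the implicit reading of $\Dom(\tau)^\Theta$ as $\{a^\Theta:a\in\Dom(\tau)\cap\Ai\}$—natural since $a^\Theta$ is defined a priori by an oscillatory integral requiring $a\in\Ai$—this last step is unnecessary, and the corollary is immediate from Lemma \ref{multipliertracecalc}.
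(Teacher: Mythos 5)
Your approach hinges on the claim that $b:=a^{1/2}$ lies in $\Ai$ because ``$\Ai$ is stable under smooth (and more generally continuous) functional calculus of self-adjoint elements.'' That is not true in this generality: $\Ai$ is the smooth domain of the derivations $\delta_k$, and such subalgebras are stable under \emph{holomorphic} functional calculus, not continuous functional calculus. The map $t\mapsto\sqrt{t}$ fails to be smooth (indeed even once differentiable) at $t=0$, and $0$ is in the spectrum of a generic positive non-invertible $a\in A$. So $a^{1/2}\in\Ai$ is unjustified, and the factorization $a^\Theta=(b^\Theta)^*b^\Theta$ on which your argument rests does not get off the ground. Your second paragraph tries to patch a different gap (leaving $\Ai$ altogether) but does not address this one.

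The paper does not need any square-root decomposition, and that is where your route genuinely diverges. The proof of Lemma~\ref{multipliertracecalc} constructs an approximate identity $\phi_k(t):=f_k(t)e_k$ for $\Si^A(\R^n)$ and shows directly that $\bra\phi_k|a^\Theta\phi_k\ket_A\to a$ for \emph{every} $a\in\Ai$; applying $\tau$ gives $\tilde\tau(a^\Theta)=\tau(a)$, which is finite when $a\in\Dom(\tau)$. This argument has two advantages you lose: it works for all $a\in\Ai\cap\Dom(\tau)$ without needing to factor $a$ inside $\Ai$, and, as the paper is careful to remark, it does not use $\alpha$-invariance of $\tau$ at all. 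Your use of ``the standard property $\tau(b\times_\Theta b)=\tau(b^2)$'' imports an invariance hypothesis that the paper deliberately avoids precisely to emphasize that this step is invariance-free. In short: re-read the proof of Lemma~\ref{multipliertracecalc}; the corollary is the assertion $\tilde\tau(a^\Theta)=\tau(a)$ already established there, and no $C^{1/2}$-regularity of $a$ is required.
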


\begin{Lemma}\label{deftraceformulalemma} Let $x(t):=ah(t)$ with $h\in L^\infty(\R^n)\cap L^2(\R^n)$ and $\alpha$-smooth $a\in A''$ with $a^*a\in\Dom(\bar{\tau})$. Then $\hat{\pi}^\Theta(x)$ is $\hat{\tau}$-Hilbert-Schmidt and
$$
\hat{\tau}(\hat{\pi}^\Theta(x)^*\hat{\pi}^\Theta(x))=\tau(a^*\times_\Theta a)\int_{\R^n}|h(t)|^2\, dt.
$$
\end{Lemma}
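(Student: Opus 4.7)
The plan is to reduce to the undeformed Lemma~\ref{traceformulalemma} via the intertwining relation $\hat{\pi}^\Theta(f) = \hat{\pi}_\alpha(f^\Theta)$ from \eqref{neshformula}, with $f^\Theta(t) = \int_{\R^n}\alpha_{\Theta s}(\hat{f}(s))e^{2\pi it\cdot s}\, ds$. For $x(t) = ah(t)$ the Fourier transform separates as $\hat{x}(s) = a\hat{h}(s)$, giving
$$
x^\Theta(t) = \int_{\R^n}\alpha_{\Theta s}(a)\hat{h}(s)e^{2\pi it\cdot s}\, ds.
$$
Using the intertwining identity together with the defining property \eqref{Cstardualtr} of the dual trace applied to the undeformed system,
$$
\hat{\tau}(\hat{\pi}^\Theta(x)^*\hat{\pi}^\Theta(x)) = \hat{\tau}(\hat{\pi}_\alpha(x^\Theta)^*\hat{\pi}_\alpha(x^\Theta)) = \tau(\bra x^\Theta|x^\Theta\ket_A).
$$

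Next I would expand the inner product explicitly. Writing $x^\Theta(t)^*x^\Theta(t)$ as a double integral over $s_1,s_2$ and integrating over $t$, the factor $\int_{\R^n} e^{2\pi it\cdot(s_2-s_1)}\, dt$ acts as a delta at $s_1 = s_2$ and collapses the answer to
$$
\bra x^\Theta|x^\Theta\ket_A = \int_{\R^n}\alpha_{\Theta s}(a^*a)\,|\hat{h}(s)|^2\, ds.
$$
Applying $\tau$, using its $\alpha$-invariance together with Plancherel's identity, yields
$$
\tau(\bra x^\Theta|x^\Theta\ket_A) = \tau(a^*a)\int_{\R^n}|\hat{h}(s)|^2\, ds = \tau(a^*a)\int_{\R^n}|h(t)|^2\, dt.
$$

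It remains to identify $\tau(a^*a)$ with $\tau(a^*\times_\Theta a)$. Expanding the oscillatory integral defining $a^*\times_\Theta a$ and applying $\tau$ with the trace property gives an integrand $\tau(\alpha_{\Theta z}(a^*)\alpha_s(a)) = \tau(a^*\alpha_{s-\Theta z}(a))$ by $\alpha$-invariance; the change of variable $s \mapsto s - \Theta z$ leaves the phase $e^{2\pi i z \cdot s}$ unchanged because $z\cdot \Theta z = 0$ (antisymmetry of $\Theta$), and the remaining $z$-integration produces $\delta(s)$, leaving $\tau(a^*a)$. Substituting completes the formula.

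The main obstacle is justifying the Fubini--Plancherel manipulations and the applicability of \eqref{Cstardualtr} to $x^\Theta$, which is an oscillatory integral rather than the simple separable product $a'\cdot h'(t)$ of Lemma~\ref{traceformulalemma}. This should be settled by checking that $x^\Theta$ still belongs to the Hilbert algebra \eqref{Hilbalgofdual} defining $\hat{\tau}$---a consequence of the $\alpha$-smoothness of $a$ combined with $\hat{h}\in L^2$---or equivalently by rerunning the operator-valued-weight computation from the proof of Lemma~\ref{traceformulalemma} directly on $x^\Theta$.
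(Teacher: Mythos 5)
Your proof is correct and follows essentially the same route as the paper: reduce via the intertwining identity $\hat{\pi}^\Theta(x)=\hat{\pi}_\alpha(x^\Theta)$ to the undeformed dual-trace computation, then evaluate. The paper's own proof is terser (it establishes the Hilbert--Schmidt property by Schwartz approximation and then simply says ``proceeding as in Lemma~\ref{traceformulalemma}'' and implicitly invokes Rieffel's identity $\tau(a^*\times_\Theta a)=\tau(a^*a)$ for $\alpha$-invariant $\tau$); you have spelled out the delegated computations explicitly, including the Plancherel collapse of $\langle x^\Theta|x^\Theta\rangle_A$ and the verification that $\tau(a^*\times_\Theta a)=\tau(a^*a)$ via the change of variable $s\mapsto s-\Theta z$ and antisymmetry of $\Theta$.
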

\begin{proof} If $h$ is in $\Si(\R^n)$ then $x$ is in $\Si^\Ai(\R^n)$ and we can define $x^\Theta$ explicitly. This is again an element of $\Si^\Ai(\R^n)$ and hence $\hat{\pi}^\Theta(x)^*\hat{\pi}^\Theta(x)$ is $\hat{\tau}$-traceable. For general $h\in L^\infty(\R^n)\cap L^2(\R^n)$ we have $x\in L^2(\R^n,\Ai)$ and we define $x^\Theta$ by approximation with Schwartz functions. In this way $x^\Theta$ is again in $L^2(\R^n,\Ai)$ and hence $\hat{\pi}_\alpha(x^\Theta)=\hat{\pi}^\Theta(x)$ is Hilbert-Schmidt for $\hat{\tau}$. Proceeding as in Lemma \ref{traceformulalemma} one obtains the formula.
\end{proof}

\begin{Remark}
The action $\alpha$ on $\Si^\Ai(\R^n)$ is just the translation action. From the invariance of the Lebesgue integral under translations one obtains \cite[Prop. 3.6]{Rie1}
$$
\int_{\R^n}(f\times_\Theta g)(t)dt=\int_{\R^n}f(t)g(t)\, dt,
$$
holds for all $f,g\in\Si^\Ai(\R^n)$. Rieffel showed furthermore in \cite[Thm. 4.1]{Rie2} that any $\alpha$-invariant trace $\tau$ on $A$ satisfies
$$
\tau(a\times_\Theta b)=\tau(ab)
$$
for all positive and smooth elements $a,b$. However, this does \emph{not} imply that $\tau(a\times_\Theta b\times_\Theta c)=\tau(abc)$ and so on, and hence the indices in Theorem \ref{headthm} cannot, for $n\geq 2$, be expressed in terms of the undeformed product in general.
\end{Remark}
Our result from the last chapter (Theorem \ref{headthm}) can in particular be applied the deformed system $(A_\Theta,\R^n,\alpha^\Theta)$. We want to rewrite the resulting formula by replacing producs $\times_\Theta$ by the ordinary operator multiplication in $A\subset\Bi(\GH)$. Let $a,b,c\in\Ai$. Using 
$$
a^\Theta b^\Theta c^\Theta=(a\times_\Theta b)^\Theta c^\Theta=(a\times_\Theta b\times_\Theta c)^\Theta
$$
we deduce the relation $\tau(a\times_\Theta b\times_\Theta c)=\tilde{\tau}(a^\Theta b^\Theta c^\Theta)$ as in Lemma \ref{multipliertracecalc}, and similarly for products of $n$ elements in $\Ai$. This gives the following result. 
\begin{thm}\label{headthmRiefl} 
Let $(A,\R^n,\alpha)$ be a $C^*$-dynamical system as in Theorem \ref{headthm} and adapt the notation introduced there. 

If $n$ is odd and $u\in \Ci^\sim$ is unitary for the product $\times_\Theta$, then
\begin{align*}
\Index_{\hat{\tau}}(\slashed{{P}}\pi^\Theta(u)\slashed{{P}})
&=-\hat{\tau}\big(\slashed{ P}[\slashed{ P},\pi_\alpha(u^{\Theta *})][\slashed{ P},\pi_\alpha(u^\Theta)]\cdots[\slashed{ P},\pi_\alpha(u^{\Theta *})][\slashed{ P},\pi_\alpha(u^\Theta)]\big)
\\&=-\frac{2^{(n-1)/2}(-1)^{(n-1)/2}((n-1)/2)!}{(2\pi i)^nn!}\tilde{\tau}\big((u^{\Theta *}\delta(u^\Theta))^n\big),
\end{align*}
where $\tilde{\tau}$ is the extension of $\tau$ to $\Li_A(X)$ defined by \eqref{extoftau}. If $n$ is even then for each projection $e\in\Ci^\sim$ for the product $\times_\Theta$ one has 
\begin{align*}
\Index_{\hat{\tau}}(\boldsymbol\pi^\Theta(e)\slashed{\boldsymbol{R}}_+\boldsymbol\pi^\Theta(e))
&=\frac{1}{2}\hat{\tau}\big(\slashed{\boldsymbol R}\Gamma[\slashed{\boldsymbol R},\boldsymbol\pi_\alpha(e^{\Theta})]\cdots[\slashed{\boldsymbol R},\boldsymbol\pi_\alpha(e^\Theta)]\big)
\\&=\frac{(-1)^{n/2}}{(n/2)!}\frac{2^n}{(2\pi i)^n}\tilde{\tau}\big((e^\Theta\delta(e^\Theta)\delta(e^\Theta))^{n/2}\big).
\end{align*}   
\end{thm}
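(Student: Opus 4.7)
The plan is to apply Theorem \ref{headthm} directly to the deformed $C^*$-dynamical system $(A_\Theta,\R^n,\alpha^\Theta)$, and then translate the resulting ``deformed'' formula back into the original representation $\pi_\alpha$ via the key identity $\pi^\Theta(a)=\pi_\alpha(a^\Theta)$ and Lemma \ref{multipliertracecalc}. First I would check that the hypotheses of Theorem \ref{headthm} are satisfied for the deformed system: $A_\Theta$ is separable (it has the same smooth subalgebra $\Ai$ as $A$, just with a new product), the action $\alpha^\Theta=\alpha$ is strongly continuous on $A_\Theta$ by Remark \ref{Riefremarks}(ii), and by \cite[Thm. 4.1]{Rie2} the $\alpha$-invariant trace $\tau$ induces an $\alpha^\Theta$-invariant faithful densely defined lower semicontinuous trace on $A_\Theta$ (still denoted $\tau$). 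Consequently, Theorem \ref{headthm} yields a local subalgebra $\Ci_\Theta\subset A_\Theta$ and a smoothly summable spectral triple, and the index formulas hold with $\delta$ still denoting the generators of $\alpha$ (these are the same derivations on the common smooth domain $\Ai$), but with the \emph{deformed} product $\times_\Theta$ entering the expressions.

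Next I would perform the translation step. For the odd case, applying Theorem \ref{headthm} to $(A_\Theta,\R^n,\alpha^\Theta)$ with a unitary $u\in\Ci_\Theta^\sim$ (i.e., $u^*\times_\Theta u=u\times_\Theta u^*=\bone$) gives
\begin{equation*}
\Index_{\hat{\tau}}(\slashed{P}\pi_{\alpha^\Theta}(u)\slashed{P})=-\frac{2^{(n-1)/2}(-1)^{(n-1)/2}((n-1)/2)!}{(2\pi i)^n n!}\tau\big((u^*\times_\Theta\delta(u)\times_\Theta\cdots\times_\Theta\delta(u))\big),
\end{equation*}
where the factor inside $\tau$ is the $\times_\Theta$-version of $(u^*\delta(u))^n$. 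To rewrite the left-hand side, I use Remark \ref{allinclusive} together with the identification $\pi_{\alpha^\Theta}(u)=\pi^\Theta(u)$ noted in the discussion of warped convolutions. To rewrite the right-hand side, I use the fundamental relation $(a\times_\Theta b)^\Theta=a^\Theta b^\Theta$ (this is \eqref{prodrelation}/\eqref{Neshrelation}), which extends by iteration to $(a_1\times_\Theta\cdots\times_\Theta a_n)^\Theta=a_1^\Theta\cdots a_n^\Theta$, combined with $\delta(a)^\Theta=\delta(a^\Theta)$ since warping commutes with the translation action. Finally, Lemma \ref{multipliertracecalc} (iterated to $n$-fold products, via the same approximate identity argument) gives
\begin{equation*}
\tau\big((u^*\times_\Theta\delta(u))^n_{\times_\Theta}\big)=\tilde{\tau}\big((u^{\Theta*}\delta(u^\Theta))^n\big),
\end{equation*}
which is the desired right-hand side. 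The even case is analogous: apply Theorem \ref{headthm} to a projection $e\in\Ci_\Theta^\sim$ (i.e. $e\times_\Theta e=e$), observe that $e\mapsto e^\Theta$ sends such projections to idempotents in $\Li_A(X)$, and apply the same translation.

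The intermediate ``quantized'' expressions involving $\slashed{P}$, $\slashed{\boldsymbol R}$, and $\hat{\tau}$ are simply Corollary \ref{extracor} applied to the deformed system, again translated via $\pi_{\alpha^\Theta}=\pi^\Theta$ and the identity $\pi^\Theta(a)=\pi_\alpha(a^\Theta)$, which turns the commutators $[\slashed{P},\pi_{\alpha^\Theta}(u)]$ into $[\slashed{P},\pi_\alpha(u^\Theta)]$. No separate proof is needed for these.

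The main obstacle I expect is the verification that the local subalgebra $\Ci_\Theta$ produced by Theorem \ref{headthm} for the deformed system has enough elements to cover the formal manipulations above, i.e.\ that for $u\in\Ci^\sim$ (the undeformed local subalgebra) the warped convolution $u^\Theta$ is actually an adjointable operator in $\Li_A(X)$ to which $\tilde{\tau}$ applies in the required sense, and that the iterated version of Lemma \ref{multipliertracecalc} holds as an \emph{equality} (not merely upper/lower bound) for the specific Rieffel products $u^*\times_\Theta\delta(u)\times_\Theta\cdots\times_\Theta\delta(u)$ appearing in the formula. This requires checking that the relevant $\alpha$-smooth elements have sufficient integrability and that the approximate-identity computation of Lemma \ref{multipliertracecalc} passes to $n$-fold products, which relies on Lemma \ref{Riefapproxident} and the fact that $\times_\Theta$ preserves the Fr\'echet algebra structure of $\Ai$.
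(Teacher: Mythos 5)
Your proposal follows essentially the same route as the paper: apply Theorem \ref{headthm} and Corollary \ref{extracor} to the deformed system $(A_\Theta,\R^n,\alpha^\Theta)$, then translate the resulting formulas back via the identification $\pi^\Theta(a)=\pi_\alpha(a^\Theta)$, the iterated product relation $(a\times_\Theta b)^\Theta=a^\Theta b^\Theta$, and the $n$-fold extension of Lemma \ref{multipliertracecalc}. The subtlety you flag about whether that iteration holds and whether the relevant warpings lie in the domain of $\tilde{\tau}$ is exactly what the paper dispatches with a terse ``and similarly for products of $n$ elements in $\Ai$,'' so your concern is well placed but does not indicate a divergence from the paper's own argument.
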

We stress again that these formulas require that we have a representation (namely the warped convolution) of $A_\Theta$ on the same Hilbert space $\GH$ as $A$ in the first place. Only then do we have $\pi_{\alpha^\Theta}=\pi_\alpha$ as maps from $A_\Theta$ into $\Bi(L^2(\R^n,\GH))$.

\subsection{Some applications}\label{conclsec}
Our original motivation for the present work was to obtain an explicit index pairing for Rieffel deformations. The relevance of such deformations to physics is that they appear when modeling interactions between quantum systems using quantum measurement theory (see \cite{An1, An2}). 


First we give an (counter)example which illustrates the need of an even more general index theory than the one used in this paper.
\begin{Example}[$\kappa$-Minkowski space] The Lebesgue integral $\tau$ defines a trace on the Schwarz algebra $\Si(\R^2)$. A certain star-product $\star_\kappa$ put on a subalgebra $\Ai$ of $\Si(\R^2)$ leads to the noncommutative space called ``$\kappa$-Minkowski space". This can be described as a Rieffel deformation of $\Ai$ \cite[§6]{MS} using an action which does not leave $\tau$ invariant. It is a beautiful fact that $\tau$ is not a trace on $\Ai_\Theta=(\Ai,\star_\kappa)$ but rather a KMS weight with respect to a group of automorphisms of $\Ai$ (we recommend \cite{Ma} for details). The ideas presented in this paper could be a step towards index pairings for $\kappa$-Minkowski. 
\end {Example}
The next example discusses a very well-established application of index pairings in physics, where Rieffel deformations could provide a new tool.
\begin{Example}[Quantum Hall effect]
Consider the commutative $C^*$-algebra $A=C_0(\Omega)$ where $(\Omega,\mu)$ is a probability measure space.  Let $\tau(f):=\int_\Omega f(t)\, d\mu(t)$ be the trace given by integration on $(\Omega,\mu)$ (note that $\tau$ is here finite on all of $A$). Let $\alpha$ be an action of $\R^n$ on $\Ai=C_c^\infty(\Omega)$.  Suppose $X_1,\dots,X_n$ are generators of a unitary group implementing $\alpha$ in $L^2(\R^n\times\Omega,\mu)$. On the crossed product $L^\infty(\Omega,\mu)\rtimes_\alpha\R^n$ there is a weight $\hat{\tau}$ dual to $\tau$ which is a trace if $\tau$ is invariant under $\alpha$; let us assume that this is the case. We let an element $ x\in L^\infty(\Omega,\mu)\rtimes\R^n$ be written formally as 
$$
x\sim \int_{\R^n}x(p)e^{ip\cdot X}
$$
where each $x(p):\Omega\to\C$ belongs to $L^\infty(\Omega,\mu)$. Then the dual weight is given by (see e.g. \cite{Kos})
$$
\hat{\tau}(x)=\tau(x(0))=\int_\Omega (x(0))(\omega)\, d\mu(\omega).
$$
It is also possible to consider a crossed product $\Ni:=L^\infty(\Omega,\mu)\rtimes_{\alpha,\tilde{B}}\R^n$ twisted by a $2$-cocycle of the form $(s,t)\to e^{is\cdot\tilde{B}t}$ on $\R^n$ with a matrix $\tilde{B}$; the dual-trace construction works in this case as well \cite{Su}. The $C^*$-algebra of interest is then the $C^*$-algebraic twisted crossed product $B=C_0(\Omega;\tilde{B})\subset \Ni$. 

Let $\slashed{X}$ denote the Dirac operator formed as in \eqref{Diracoperator} from the generators $X_1,\dots,X_n$ of the unitary group implementing $\alpha$ on $\Ni$. Take $n$ odd, for example, and set
$$
\lambda_n:=-\frac{2^{(n-1)/2}(-1)^{(n-1)/2}((n-1)/2)!}{(2\pi i)^nn!}.
$$
Then for a unitary $u\in\Ai$ we get a formula for the spectral flow $\Sf(\slashed{X},u^*\slashed{X}u)$ from Theorem \ref{headthm},
\begin{align*}
\Sf(\slashed{X},u^*\slashed{X}u)&=\lambda_n\sum_{\epsilon}(-1)^\epsilon\tau\Big(\prod_{k=1}^n u^{-1}\sqrt{-1}[X_{\epsilon(k)},u]\Big)
\\&=\lambda_n\sum_{\epsilon}(-1)^\epsilon\prod_{k=1}^n \int_{\Omega}u^{-1}(\omega)\sqrt{-1}[X_{\epsilon(k)},u](\omega)\, d\mu(\omega),
\end{align*}
the sum running over all permutations $\epsilon$ of $\{1,\dots,n\}$ with sign $(-1)^\epsilon$. According to Theorem \ref{headthmRiefl}, deforming with a matrix $\Theta$ to incorporate the effect of some external interaction, the spectral flow becomes
$$
\Sf^\Theta(\slashed{X},u^*\slashed{X}u)=\lambda_n\sum_{\epsilon}(-1)^\epsilon\prod_{k=1}^n \int_{\Omega}u^{*}(\omega)\times_\Theta\sqrt{-1}[X_{\epsilon(k)},u](\omega)\, d\mu(\omega).
$$
The reader may recognize that what we are discussing here is the setting of the extremely elegant and successful formulation of the integral quantum Hall effect using noncommutative geometry, due to Bellissard et al. \cite{BES}. There, the matrix $\tilde{B}$ which defines the $2$-cocycle $(s,t)\to e^{is\cdot\tilde{B}t}$ is given by $\tilde{B}t:=B\wedge t$, where $B=(B_1,\dots,B_n)$ is the constant magnetic vector field. It has been realized \cite{KeR} that the Bellissard approach is related to the more recent magnetic pseudodifferential calculus of \cite{LM}. Twisted crossed product are very similar to Rieffel deformation but still different \cite{BeM}. Using the results of this paper we can reproduce the quantum Hall algebra and the operators whose Fredholm indices give the quantized conductance, modulo the distinction between crossed products and Rieffel deformation. 

In \cite{BES} the $X_k's$ play the role of position operators, generators of momentum translations, and a spectral triple is defined using $\slashed{X}$. Most prominently, the index of the bounded transform of $\slashed{X}$ (compressed with the Fermi projection) has been used to calculate the Hall conductivity when $n=2$. In \cite{PLB}, this was generalized to the construction of a spectral triple from $\slashed{X}$ to any even $n\geq 1$, and for even $n$ only. The index of Toeplitz operators $PuP$ was used in \cite{ASS} for a mathematical formulation of physical processes, including the integer Hall effect. Theorem \ref{headthm} shows that the $X_k$'s appear also here, and even though the approaches \cite{BES} and \cite{ASS} seem very different at first, the distinction mainly comes from ``even versus odd". For even dimensions the odd pairing can be used by considering unitaries in $C_0(\R,A)^\sim$, which is also what is done in \cite{ASS} for $n=2$ (see also \cite{Go}).
 
There is also a paper \cite{PS} showing the relevance of the Bellissard approach also to odd case. Moreover, results very similar to those of this paper but applicable to twisted crossed products are discussed in \cite{Bour1}. In the same work \cite{Bour1} appears an extensive up-to-date discussion about the $C^*$-approach to topological condensed-matter systems such as the quantum Hall effect.   
\end {Example}

One reason why we are more attracted to the use of Rieffel deformation than twisted crossed products is the direct relation between Rieffel deformation to interactions as they are usually described in quantum physics \cite{An1, An2}. If the $D_k$'s are position operators then the dual action $\hat{\alpha}:\R^n\to\Aut(\Ni)$, implemented by a unitary group $e^{iv\cdot P}$, can be interpreted as the group of spacetime translations. Thus $P=(P_1,\dots,P_n)$ are the energy-momenta. Performing a Rieffel deformation gives that the $P_k$'s are changed by a term coming from the $D_k$'s as we saw in Proposition \ref{dualgenerators}. So the deformation is like adding an external term to the energy or to the momenta, interpreted suitably as coming from the interaction with another quantum system. Note that, by choice of gauge, a transient external electric field can be incorporated either via a potential energy term added to the Hamiltonian, or via an external vector potential term added to $P_1,\dots,P_n$ \cite{BoGK}. We know that either of these can be obtained from Rieffel deformation \cite{An2}.

On the other hand, if we have an action $\alpha$ generated by the momenta $(D_1,\dots,D_n)=(P_1,\dots,P_n)$, then $\slashed{D}=\gamma^kP_k$ is a Dirac operator in the physical sense, and the positive projection $P$ singles out the states of positive energy. The spectral flow between $\slashed{D}$ and $u^*\slashed{D}u$ is then like the amount of charge transferred due to the operation $u$. This could be any real number, although it may be possible to obtain further restrictions on its possible values in specific examples.

Use of such $\slashed{D}$ is not limited to condensed matter physics. In fact, (Lorentzian) spectral triples have been used to define a CAR algebra when the field operators act as multiplication operators with a Moyal product (i.e. the special kind of Rieffel product when the initial algebra $\Ai$ is commutative) \cite{BV, V}. The relevant algebra is thus a Rieffel deformation of a commutative algebra like $\Si(\R^n)$ and the present paper strongly suggests that Connes'-type pairings can be used also in this setting.


\begin{thebibliography}{99}

\bibitem{An1} Andersson A. Operator deformations in quantum measurement theory. Lett. Math. Phys. Vol 104, Issue 4, pp. 415-430 (2014). 

\bibitem{An2} Andersson A. Electromagnetism in terms of quantum measurements. arXiv:1509.04888 (2015).

\bibitem{An3} Andersson A. The noncommutative Gohberg-Krein theory. PhD thesis. University of Wollongong (to appear).

\bibitem{Atiy1} Atiyah M. $K$-theory. Advanced Book Classics. Addison-Wesley (1989).

\bibitem{Atiy3} Atiyah M. Bott periodicity and the index of elliptic operators. Quart. J. Math. Oxford Ser. Vol 19, Issue 2, pp. 113-140 (1968).

\bibitem{ASS} Avron JE, Seiler R, Simon B. Charge deficiency, charge transport and comparison of dimensions. Comm. Math. Phys. Vol 159, pp. 399-422 (1994).

\bibitem{Az1} Azamov NA. Spectral shift function in von Neumann algebras. Doctorial thesis. Flinders University, School of Informatics and Engineering (2008).

\bibitem{BES} Bellissard J, van Elst A, Schulz-Baldes H. The non-commutative geometry of the quantum Hall effect. J. Math. Physics. Vol 35, Issue 10, pp. 5373-5451 (1994).

\bibitem{BeM} Belti\c{t}\u{a} I, M\u{a}ntoiu M. Rieffel deformation and twisted crossed products. Int. Math. Res. Not. IMRN. Issue 2, pp. 551-567 (2014).

\bibitem{BCPRSW} Benameur MT, Carey A, Phillips J, Rennie A, Sukochev FA, Wojciechowski K.P. An analytic approach to spectral flow in von Neumann algebras. In: Analysis, geometry and topology of elliptic operators. World Sci. Publ. pp. 297-352 (2006).

\bibitem{BeFa1} Benameur MT, Fack T. Type II non-commutative geometry. I. Dixmier trace in von Neumann algebras. Adv. Math. Vol 199, Issue 1, pp. 29-87 (2006).

\bibitem{BGV} Berline N, Getzler E, Vergne M. Heat kernels and Dirac operators. Springer-Verlag (1992). 

\bibitem{BNS} Bhowmick J, Neshveyev S, Sangha A. Deformation of operator algebras by Borel cocycles. J. Funct. Anal. Vol 265, Issue 6, pp. 983-1001 (2013). 

\bibitem{Bla} Blackadar B. $K$-theory for operator algebras. Math. Sci. Res. Inst. Publ. Vol 5. Springer (1986).


\bibitem{BV} Borris M, Verch R. Dirac field on Moyal-Minkowski spacetime and non-commutative potential scattering. Comm. Math. Phys. Vol 293, Issue 2, pp. 399-448 (2010).

\bibitem{BoGK} Bouclet JM, Germinet F, Klein A, Schenker J. Linear response theory for magnetic Schrödinger operators in disordered media. J. Funct. Anal. Vol 226, 301-372 (2005).



\bibitem{Bour1} Bourne C. Topological states of matter and noncommutative geometry. Thesis. Australian National University (to appear). 


\bibitem{Breu1} Breuer M. Fredholm theories in von Neumann algebras I. Math. Ann. Vol 178, pp. 243-254 (1968).

\bibitem{Breu2} Breuer M. Fredholm theories in von Neumann algebras II. Math. Ann. Vol 180, pp. 313-325 (1969).


\bibitem{BLS} Buchholtz D, Lechner G, Summers S. Warped convolutions, Rieffel deformations and the construction of quantum field theories. Comm. Math. Phys. Vol 304, pp. 95-123 (2011). 


\bibitem{Busby1} Busby RC. Double centralizers and extensions of $C^*$-algebras. Trans. Amer. Math. Soc. Vol 132, Issue 1, pp. 79-99 (1968).


\bibitem{CGPRS} Carey A, Gayral V, Phillips J, Rennie A, Sukochev FA. Spectral flow for nonunital spectral triples. Canad. J. Math. Vol 67, pp. 759-794 (2015).

\bibitem{CGRS1} Carey A, Gayral V, Rennie A, Sukochev F.A. Index theory for locally compact noncommutative
geometries. Mem. Amer. Math. Soc. Vol 231, Issue 1085 (2014).



\bibitem{CGRS2} Carey A, Gayral V, Rennie A, Sukochev FA. Integration on locally compact noncommutative spaces. J. Funct. Anal. Vol 263, Issue 2, pp. 383-414 (2012).


\bibitem{CP1} Carey A, Phillips J. Unbounded Fredholm modules and spectral flow. Canadian J. Math. Vol 50, Issue 4, pp. 673-718 (1998).

\bibitem{CP2} Carey A, Phillips J. Spectral flow in Fredholm modules, eta invariants and the JLO cocycle. K-Theory. Vol 31, pp. 135-194 (2004).

\bibitem{CPRS1} Carey A, Phillips J, Rennie A, Sukochev FA. The local index formula in semifinite von Neumann
algebras I: Spectral flow. Adv. Math. Vol 202, pp. 451-516 (2006).




\bibitem{CPRS2} Carey A, Phillips J, Rennie A, Sukochev FA. The local index formula in noncommutative geometry revisited. In:  Noncommutative Geometry and Physics, pp. 3-36. Singapore: World Scientific (2013).


\bibitem{CPRS3} Carey A, Phillips J, Rennie A, Sukochev FA. The Chern character of semifinite spectral triples. J. Noncommut. Geom. Vol 2, pp. 141-193 (2006).


\bibitem{CPRS4} Carey A, Phillips J, Rennie A, Sukochev FA. The local index formula in semifinite von Neumann
algebras II: The even case. Adv. Math. Vol 202, pp. 517-554 (2006).




spectral triples. J. Funct. Anal. Vol 213, pp. 111-153 (2004).





\bibitem{CPS} Carey A, Phillips J, Sukochev FA. Spectral flow and Dixmier traces. Adv. Math. Vol 173, Issue 1, pp. 68-113 (2003).

\bibitem{Cob1} Coburn LA. The $C^*$-algebra generated by an isometry I. Bull. Amer. Math Soc. Vol 73, pp. 722-726 (1967). 

\bibitem{Cob2} Coburn LA. The $C^*$-algebra generated by an isometry II. Trans. Amer. Math. Soc. Vol 137, pp. 211-217 (1969). 

\bibitem{Co9} Connes A. A survey of foliations and operator algebras. Proc. Symp. Pure Math. Vol 38, Part I, pp. 521-628 (1982).


\bibitem{Co} Connes A. An analogue of the Thom lsomorphism for crossed products of a $C^*$-algebra by an action of $\R$. Adv. Math. Vol 39, pp. 31-55 (1981).







\bibitem{ENN1} Elliott GA, Natsume T, Nest R. Cyclic cohomology for one-parameter smooth crossed products. Acta Math. Vol 160, Issue 1, pp. 285-305 (1988).


\bibitem{FS} Fack T, Skandalis G. Connes' analogue of the Thom isomorphism for the Kasparov groups. Invent. Math. Vol 64, pp. 7-14 (1981).


\bibitem{Getz1} Getzler E. The odd Chern character in cyclic homology and spectral flow. Topology. Vol 32, Issue 3, pp. 489-507 (1993).

\bibitem{Go} Goffeng M. Index formulas and charge deficiencies on the Landau levels. J. Math. Phys. Vol 51, Issue 2, pp. 023509-023509 (2010).

\bibitem{GK} Gohberg I, Krein M. Fundamental aspects of defect numbers, root numbers and indices of linear operators. Uspehi Mat. Nauk (N.S.). Vol 12, Issue 2(74), pp. 43-118 (1957).



\bibitem{H1} Haagerup U. On the dual weight for crossed products of von Neumann algebras I. Math. Scand. Vol 43, pp. 99-118 (1978).

\bibitem{H2} Haagerup U. On the dual weight for crossed products of von Neumann algebras II. Math. Scand. Vol 43, pp. 119-140 (1978).

\bibitem{H3} Haagerup U. Operator valued weights in von Neumann algebras I. J. Funct. Anal. Vol 32, pp. 175-206 (1979). 



\bibitem{HM} Hannabuss K, Mathai V. Noncommutative principal torus bundles via parametrised strict deformation quantization. Lett. Math. Phys. Vol 102, Issue 1, pp. 107-123 (2012).





\bibitem{Ji} Ji R. On the smoothed Toeplitz extensions and $K$-theory. Proc. Amer. Math. Soc. Vol 109, Issue 1 (1990).  

\bibitem{KNR} Kaad J, Nest R, Rennie A. $KK$-Theory and spectral flow in von Neumann algebras. J. K-Theory Vol 10, pp. 241-277 (2012).

\bibitem{Karo1}  Karoubi M. $K$-theory: an introduction. Vol 226. Springer-Verlag (2008).

\bibitem{Kasp1} Kasparov GG. The operator $K$-functor and extensions of $C^*$-algebras. Izv. Ross. Akad. Nauk Ser. Mat. Vol 44, Issue 3, pp. 571-636 (1980).


\bibitem{Kasp3} Kasparov GG. Equivariant $KK$-theory and the Novikov conjecture. Inventiones mathematicae. Vol 91, Issue 1, pp. 147-201 (1988).

\bibitem{Kas} Kasprzak P. Rieffel deformation via crossed products. J. Funct. Anal. Vol 257, Issue 5, pp. 1288-1332 (2009).

\bibitem{KeR} Kellendonk J, Richard S. Topological boundary maps in physics; General theory and applications. In: Perspectives in Operator Algebras and Mathematical Physics, 105-121 (2008).


\bibitem{Kos} Kosaki H. Type III factors and index theory. Notes based on lectures given at Seoul National University (1998).


\bibitem{LN} Laca M, Neshveyev S. KMS states of quasi-free dynamics on Pimsner algebras. J. Funct. Anal. Vol 211, pp. 457-482 (2004).

\bibitem{Landi2} Landi G. Examples of gauged Laplacians on noncommutative spaces. Russ. J. Math. Phys. Vol 16, Issue 3, pp. 429-445 (2009).

\bibitem{L} Lechner G. Deformations of operator algebras and the construction of quantum field theories. XVIth International Congress on Mathematical Physics, Proceedings of the ICMP. World Scientific (2009). 


\bibitem{LM} Lein M, M\u{a}ntoiu M, Richard S. Magnetic pseudodifferential operators with coefficients in $C^*$-algebras. Publ. RIMS Kyoto Univ. Vol 46, pp. 755-788 (2010).

\bibitem{Le} Lesch M. On the index of the infinitesimal generator of a flow. J. Operator Theory. Vol 25, pp. 73-92 (1991).




\bibitem{Ma} Matassa M. A modular spectral triple for $\kappa$-Minkowski space. J. Geom. Phys. Vol 76C, pp. 136-157 (2014).

\bibitem{MS} Mercati F, Sitarz A. $\kappa$-Minkowski differential calculi and star product. ArXiv:1105.1599 (2011).



\bibitem{Mu1} Much A. Quantum mechanical effects from deformation theory. J. Math. Phys. 55, id.022302 (2014).

\bibitem{Mu2} Much A. Wedge-local quantum fields on a nonconstant noncommutative spacetime. J. Math. Phys. Vol 53, Issue 8, pp. 082303-082303 (2012).

\bibitem{Mu3} Much A. Self-adjointness of deformed unbounded operators. J. Math. Phys. Vol 56, p. 093501 (2015).


\bibitem{Ne} Neshveyev S. Smooth crossed product of Rieffel's deformations. Lett. Math. Phys. Vol 104, Issue 3, pp. 361-371 (2014).

\bibitem{NT1} Neshveyev S, Tuset L. Deformation of $C^*$-algebras by cocycles on locally compact quantum groups. Adv. Math. Vol 254, pp. 454-496 (2014).

\bibitem{Ped1} Pedersen P. $C^*$-algebras and their automorphism groups. Academic Press (1979).

\bibitem{Ols} Olsen W. $K$-theory of $C^*$-algebras - a friendly approach. Oxford University Press (1993).

\bibitem{Ped1} Pedersen G.K. $SAW^*$-algebras and corona $C^*$-algebras, contributions to non-commutative topology. J. Operator Theory. Vol 4, pp. 15-32 (1986).

\bibitem{Ph1} Phillips J. Self-adjoint Fredholm operators and spectral flow. Canad. Math. Bull. Vol 39, Issue 4, pp. 460-467 (1996).


\bibitem{PR} Phillips J, Raeburn I. An index theorem for Toeplitz operators with noncommutative symbol space. J. Funct. Anal. Vol 120, pp. 239-263 (1994).

\bibitem{PLB} Prodan E, Leung B, Bellissard J. The non-commutative $n$th Chern number ($n\geq 1$). J. Phys. A: Math. Theor. Vol 46, Issue 48, pp. 485202-485221 (2013).

\bibitem{PS} Prodan E, Schulz-Baldes H. Non-commutative odd Chern numbers and topological phases of disordered chiral systems. arXiv:1402.5002v1 (2014).


\bibitem{Rie1} Rieffel MA. Deformation quantization for actions of $\R^d$. Mem. Amer. Math. Soc. Vol 106, Issue 506 (1993).

\bibitem{Rie2} Rieffel MA. Compact quantum groups associated with toral subgroups. Contemp. Math. Vol 145. Amer. Math.
Soc. Providence, RI. pp. 465-491 (1993).


\bibitem{Rie3} Rieffel MA. Quantization and $C^*$-algebras. Contemp. Math. Vol 167. pp. 67-97 (1994).


\bibitem{Rie5} Rieffel MA. $K$-groups of $C^*$-algebras deformed by actions of $\R^d$. J. Funct. Anal. Vol 116, pp. 199-214 (1993).


\bibitem{Rose4} Rosenberg J. Examples and applications of noncommutative geometry and $K$-theory. Clay. Math. Proc. Vol 16, pp. 93-130 (2012).

\bibitem{Sang1} Sangha A. $KK$-fibrations arising from Rieffel deformations. arXiv:1109.5968v1 (2011). 




\bibitem{Su} Sutherland CE. Cohomology and extensions of von Neumann algebras I. Pub., RIMS. Kyoto
Univ. Vol 16, pp. 105-133 (1980).


\bibitem{Takai1} Takai H. Duality for $C^*$-crossed products and its applications. Operator algebras and applications, Part 1. Amer. Math. Soc. Providence, RI. pp. 369-373 (1982).

\bibitem{Ta2} Takesaki M. Theory of operator algebras II. Springer-Verlag (2003).



\bibitem{V} Verch R. Quantum Dirac field on Moyal-Minkowski spacetime - Illustrating quantum field theory over Lorentzian spectral geometry. Acta Phys. Polon. B. Proceedings supplement. Vol 4, Issue 3, pp. 507-527 (2011).

\bibitem{Will1} Williams DP. Crossed products of $C^*$-algebras. Amer. Math. Soc. Vol 134, Issue  (2007).

\bibitem{Yama1} Yamashita M. Connes-Landi deformation of spectral triples. Lett. Math. Phys. Vol 94, Issue 3, pp. 263–291 (2010).

\end{thebibliography}
\end{document}